\providecommand{\keywords}[1]{\textbf{Keywords:} #1}
\newtheorem{thm}{Theorem}[section]
\newtheorem{cor}[thm]{Corollary}
\newtheorem{lem}[thm]{Lemma}
\newtheorem{prop}[thm]{Proposition}
\newtheorem{prop-defn}[thm]{Proposition/Definition}
\newtheorem{conjec}[thm]{Conjecture}
\newtheorem{defn}[thm]{Definition}
\newtheorem{exa}[thm]{Example}
\newtheorem{nota}[thm]{Notation}
\newtheorem{ass}[thm]{Assumptions}
\newtheorem{rem}[thm]{Remark}
\numberwithin{equation}{section}
\newenvironment{proof}{\noindent \emph{Proof.}}{\hspace{\stretch{1}}$\Box$}
\newcommand{\mcC} {\mathcal{C}}
\newcommand{\mcM} {\mathcal{M}}
\newcommand{\mcN} {\mathcal{N}}
\newcommand{\mcS} {\mathcal{S}}
\newcommand{\ii} {\mathrm{i}}
\newcommand{\ee} {\mathrm{e}}
\newcommand{\ind} {\indices}
\newcommand{\lp} [1] {{\left( #1 \right. }}
\newcommand{\rp} [1] {{\left. #1 \right) }}
\newcommand{\lb} [1] {{\left[ #1 \right. }}
\newcommand{\rb} [1] {{\left. #1 \right] }}
\newcommand{\Rho} {\mathrm{P}}
\newcommand{\im} {\mathop{\mathrm{im}}}
\newcommand{\End} {\mathrm{End}}
\newcommand{\gr} {\mathrm{gr}}
\newcommand{\Spin} {\mathrm{Spin}}
\newcommand{\SL} {\mathrm{SL}}
\newcommand{\GL} {\mathrm{GL}}
\newcommand{\SO} {\mathrm{SO}}
\newcommand{\OO} {\mathrm{O}}
\newcommand{\Gr} {\mathrm{Gr}}
\newcommand{\G} {\mathrm{G}}
\newcommand{\Tgt} {\mathrm{T}}
\newcommand{\Id} {\mathrm{Id}}
\newcommand{\so} {\mathfrak{so}}
\newcommand{\slie} {\mathfrak{sl}}
\newcommand{\glie} {\mathfrak{gl}}
\newcommand{\g} {\mathfrak{g}}
\newcommand{\prb} {\mathfrak{p}}
\newcommand{\mfz} {\mathfrak{z}}
\newcommand{\mfC} {\mathfrak{C}}
\newcommand{\mfN} {\mathfrak{N}}
\newcommand{\mfV} {\mathfrak{V}}
\newcommand{\mfM} {\mathfrak{M}}
\newcommand{\mfS} {\mathfrak{S}}
\newcommand{\mfA} {\mathfrak{A}}
\newcommand{\mfF} {\mathfrak{F}}
\newcommand{\mfI} {\mathfrak{I}}
\newcommand{\mfW} {\mathfrak{W}}
\newcommand{\Pp} {\mathbb{P}}
\newcommand{\CP} {\mathbb{CP}}
\newcommand{\C} {\mathbb{C}}
\newcommand{\Z} {\mathbb{Z}}
\newcommand{\Cl} {\mathcal{C}\ell}
\newcounter{mnotecount}[section]
\renewcommand{\themnotecount}{\thesection.\arabic{mnotecount}}
\newcommand{\mnote}[1]%{}%
{\protect{\stepcounter{mnotecount}}$^{\mbox{\footnotesize
$%\!\!\!\!\!\!\,
\bullet$\themnotecount}}$ \marginpar{\color{red}
\raggedright\tiny\em
$\!\!\!\!\!\!\,\bullet$\themnotecount: #1} }
\begin{document}

\title{Pure spinors, intrinsic torsion and curvature in even dimensions}
\author{Arman Taghavi-Chabert\\
{\small Masaryk University, Faculty of Science, Department of Mathematics and Statistics,}\\
 {\small Kotl\'{a}\v{r}sk\'{a} 2, 611 37 Brno, Czech Republic } }
\date{}

\maketitle

\begin{abstract}
We study the geometric properties of a $2m$-dimensional complex manifold $\mathcal{M}$ admitting a holomorphic reduction of the frame bundle to the structure group $P \subset \mathrm{Spin}(2m,\mathbb{C})$, the stabiliser of the line spanned by a pure spinor at a point. Geometrically, $\mathcal{M}$ is endowed with a holomorphic metric $g$, a holomorphic volume form, a spin structure compatible with $g$, and a holomorphic pure spinor field $\xi$ up to scale. The defining property of $\xi$ is that it determines an almost null structure, ie an $m$-plane distribution $\mathcal{N}_\xi$ along which $g$ is totally degenerate.

We develop a spinor calculus, by means of which we encode the geometric properties of $\mcN_\xi$ corresponding to the algebraic properties of the intrinsic torsion of the $P$-structure. This is the failure of the Levi-Civita connection $\nabla$ of $g$ to be compatible with the $P$-structure. In a similar way, we examine the algebraic properties of the curvature of $\nabla$.

Applications to spinorial differential equations are given. In particular, we give necessary and sufficient conditions for the almost null structure associated to a pure conformal Killing spinor to be integrable. We also conjecture a Goldberg-Sachs-type theorem on the existence of a certain class of almost null structures when $(\mathcal{M},g)$ has prescribed curvature.

We discuss applications of this work to the study of real pseudo-Riemannian manifolds.
\end{abstract}

\keywords{complex Riemannian geometry; pure spinors; distributions; intrinsic torsion; curvature prescription; spinorial equations}

\section{Introduction}
Let $\mcM$ be a complex manifold of dimension $n$, and denote by $\Tgt \mcM$ and $\Tgt^* \mcM$ its holomorphic tangent and cotangent bundles respectively, and by $\mathrm{F} \mcM$ its holomorphic  frame bundle. Following \cite{LeBrun1983}, we define a \emph{holomorphic metric} on $\mcM$ to be a non-degenerate holomorphic section $g$ of the bundle $\odot^2 \Tgt^* \mcM$ --- here $\odot$ denotes the symmetric tensor product. We identify $\Tgt \mcM$ and $\Tgt^* \mcM$ by means of $g$. The pair $(\mcM,g)$ will be referred to as a \emph{complex Riemannian manifold}, and is characterised equivalently by a holomorphic reduction of the structure group of $\mathrm{F} \mcM$ to the complex orthogonal group $\OO(n,\C)$. Analogously to real pseudo-Riemannian geometry, there is a unique torsion-free holomorphic affine connection $\nabla$ preserving $g$, also referred to as the \emph{Levi-Civita connection} of $g$, with associated curvature tensors, which depend holomorphically on $\mcM$. We shall also assume the existence of a global \emph{holomorphic volume form} $\varepsilon \in \Gamma( \wedge^{n} \Tgt^* \mcM )$ normalised to $g(\varepsilon,\varepsilon) = n!$ --- here, we have extended $g$ to a non-degenerate bilinear form on the bundle $\wedge^\bullet \Tgt \mcM$ of holomorphic differential forms, and its dual. This  induces a further holomorphic reduction of the structure group of $\mathrm{F} \mcM$ to the complex special orthogonal group $\SO(n,\C)$. The pair $(g,\varepsilon)$ can be used to define a holomorphic Hodge duality operator $\star$ on $\wedge^{\bullet} \Tgt^* \mcM$. We shall henceforth assume $n=2m$. Then $\star$ squares to plus or minus the identity on $\wedge^m \Tgt^* \mcM$, and thus splits $\wedge^m \Tgt^* \mcM$ as a direct sum of the two eigensubbundles $\wedge^m_\pm \Tgt^* \mcM$ of $\star$. Elements of $\wedge^m_\pm \Tgt^* \mcM$ are referred to as holomorphic \emph{self-dual} and \emph{anti-self-dual} $m$-forms.

This article is concerned with the local geometric properties of an \emph{almost null structure} on $(\mcM,g)$, i.e.\ a holomorphic rank-$m$ distribution $\mcN \subset \Tgt \mcM$ totally null with respect to $g$, i.e $g(v,w)=0$ for all $v$ and $w$ in $\mcN_p$, and $\dim \mcN_p = m$ at any point $p$ of $\mcM$. Being determined (ie annihilated) by a holomorphic $m$-form, an almost null structure may be either self-dual or anti-self-dual, and is also referred to as an \emph{$\alpha$-plane} or \emph{$\beta$-plane distribution} accordingly.

There is a slick way to describe an almost null structure if we assume in addition $(\mcM,g)$ to be \emph{spin}, i.e.\ it admits a holomorphic reduction to $\Spin(2m,\C)$, the two-fold covering of $\SO(2m,\C)$. In this case, $(\mcM,g)$ is endowed with two irreducible spinor bundles $\mcS^+$ and $\mcS^-$. Sections of $\Tgt \mcM$ acts on sections of $\mcS^\pm$ via Clifford multiplication $\cdot : \Tgt \mcM \times \mcS^\pm \rightarrow \mcS^\mp$. In particular, a holomorphic section $\xi$ of $\mcS^+$ or $\mcS^-$ determines a distribution $\mcN_\xi$ on $\mcM$ in the sense that
\begin{align*}
(\mcN_\xi )_p& := \left\{ v \in \Tgt_p \mcM : v \cdot \xi_p \right\} \, , &  \mbox{at any point $p$ in $\mcM$.}
\end{align*}
The defining property of the Clifford multiplication tells us that $\mcN_\xi$ is totally null. When $\mcN_\xi$ has dimension $m$ at every point, $\xi$ is said to be \emph{pure}. If we refer to a pure spinor $\xi$ defined \emph{up to scale} as a \emph{projective pure spinor} $[\xi]$, it is clear that a projective pure spinor field $[\xi]$ determines a unique almost null structure $\mcN_\xi$. Conversely, any almost null structure arises in this way. Whether $\xi$ lies in $\mcS^+$ or $\mcS^-$ corresponds to whether $\mcN_\xi$ is self-dual or anti-self-dual. All spinors in $\mcS^\pm$ are pure in dimensions two, four and six, but when $m>3$, the property of being pure imposes non-trivial algebraic conditions on the components of a spinor. 

The geometric properties of an almost null structure $\mcN_\xi$ associated to a projective pure spinor $[\xi]$ can be expressed in terms of the covariant derivative of $[\xi]$. For instance, if $\mcN_\xi$ is integrable, i.e.\ $[\Gamma(\mcN_\xi), \Gamma(\mcN_\xi)] \subset \Gamma(\mcN_\xi)$, then one can show that the leaves of its foliation are totally geodetic, i.e.\ $\nabla_X Y \in \Gamma (\mcN_\xi)$ for any holomorphic sections $X$, $Y$ of $\mcN_\xi$. This condition can also be expressed as \cite{Hughston1988}
\begin{align}\label{eq-totgeod}
\nabla_X \xi & = \lambda_X \xi \, , & \mbox{for any $X \in \Gamma (\mcN_\xi)$, and some holomorphic function $\lambda_X$ dependent on $X$,}
\end{align}
where, with a slight abuse of notation, $\nabla$ denotes the spin connection induced from the Levi-Civita connection. Note that \eqref{eq-totgeod} is independent of the scale of $\xi$. Further, if $\xi$ satisfies \eqref{eq-totgeod}, then
\begin{align}\label{eq-inttotgeod}
C (X,Y,Z,W) & = 0 \, , & \mbox{for all $X, Y, Z, W \in \Gamma (\mcN_\xi)$.}
\end{align}
where $C$ denotes the Weyl tensor of $\nabla$, i.e.\ the conformally invariant part of the Riemann tensor of $\nabla$. 

The investigation of conditions such as \eqref{eq-totgeod} and \eqref{eq-inttotgeod} will be the subject of this article. For this purpose, we note that an almost null structure $\mcN_\xi$ on $(\mcM,g)$ associated to a projective pure spinor field $[\xi]$ is equivalent to a holomorphic reduction of the structure group of $\mathrm{F} \mcM$  to the stabiliser $P \subset G := \Spin(2m,\C)$ of $[\xi]$ at a point. This $P$ is an instance of a \emph{parabolic} subgroup, and is isomorphic to the semi-direct product $G_0 \ltimes P_+$ where part $G_0$ is reductive, and $P_+$ is nilpotent. The Lie algebras $\prb \subset \g \cong \so(2m,\C)$ of $P$ is  isomorphic to $\g_0 \oplus \prb_+$, where $\g_0 \cong \glie(m,\C)$ and $\prb_+ \cong \wedge^2 \C^m$ are the Lie algebras of $G_0$ and $P_+$ respectively. Here, we have identified $(\mcN_\xi)_p \cong \C^m$ at any point $p$.

Condition \eqref{eq-totgeod} is intimately connected to the notion of \emph{intrinsic torsion} or \emph{structure function} of a first-order $\mathrm{G}$-structure \cites{Chern1953,Bernard1960,Salamon1989}. In the present context, where the structure group is $P$, this is an invariant of $\mcN_\xi$, which, at any point, lies in the $P$-module $\mfW := \mfV \otimes \g / \prb$, where $\mfV \cong \C^{2m}$ is the standard representation of $\g$. Geometrically, it is the obstruction to finding a unique torsion-free connection compatible with $\mcN_\xi$. In other words, it measures the failure of the Levi-Civita connection to preserve $\mcN_\xi$. A number of geometric properties of $\mcN_\xi$ can be encoded as $P$-invariant algebraic conditions on its intrinsic torsion. For instance, condition \eqref{eq-totgeod} can be shown to be equivalent to the intrinsic torsion belonging to a certain proper $P$-submodule of $\mfW$. Identifying all the possible $P$-submodules of $\mfW$ provides a systematic way of `classifying' $\mathrm{G}$-structures with structure group $P$. Such an approach was adopted to provide a classification of almost Hermitian manifolds by Gray and Hervella in \cite{Gray1980}. 

Dealing with condition \eqref{eq-inttotgeod} is similar. In general, if $\mfM$ is a finite $G$-module, $P$ induces a filtration
\begin{align}\label{eq-filtration-M}
\{ 0 \} = \mfM^{\ell+1} \subset \mfM^{\ell} \subset \mfM^{\ell-1} \subset \ldots \subset \mfM^{-k+1} \subset \mfM^{-k} := \mfM
\end{align}
of indecomposable $P$-modules $\mfM^i$ for some $k$ and $\ell$. The nilpotent part $P_+$ acts trivially on each of the associated quotients $\mfM^i/\mfM^{i+1}$, while the reductive part $G_0$, and hence $P$, acts reducibly on these. This applies in particular to the case where $\mfM$ is  the space $\mfC$ of Weyl tensors at a point. We shall see, in this case, $k=\ell=2$ , and condition \eqref{eq-inttotgeod} tells us that the Weyl tensor belongs to the $P$-submodule $\mfC^{-1}:= \mfM^{-1}$ at a point. A precedent for this approach in almost Hermitian geometry can be found in \cites{Tricerri1981,Falcitelli1994}.

The aims of the paper are to
\begin{itemize}
\item give a $P$-invariant decomposition of the space $\mfW$ of intrinsic torsions;
\item give $P$-invariant decompositions of the spaces of curvature tensors, in particular, the tracefree Ricci tensors, Cotton-York tensors and Weyl tensors;
\item apply these decompositions to the study of almost null structures and pure spinor fields on complex Riemannian manifolds.
\end{itemize}

An integral part of this article will be the construction of a \emph{spinor calculus} in relation to the $P$-structure above. This essentially impinges on the remark \cites{Hughston1988,Budinich1989} that if $\xi \in \Gamma(\mcS^+)$ is pure, then any $Z \in \Gamma( \mcN_\xi )$ satisfies
\begin{align}\label{eq-spin2vec}
g(Z,X) & = \langle \zeta , X \cdot \xi \rangle \, , & \mbox{for some $\zeta \in \Gamma( (\mcS^-)^* )$ and for any $X \in \Gamma(\Tgt \mcM)$.}
\end{align}
Here $\langle \cdot , \cdot \rangle$ is the natural pairing between $\mcS^-$ and $(\mcS^-)^*$. This fact will allow us to construct maps whose kernels can be used to define certain $P$-submodules of a given $P$- or $G$-module. This is a standard procedure in representation theory where (irreducible) representations are described in terms of kernels of suitable multilinear maps. For instance, the kernel of the symmetrisation map $\otimes^2 \C^m \rightarrow \odot^2 \C^m$ is the irreducible $\SL(m,\C)$-module $\wedge^2 \C^m$. The only difference here is that the maps will now depend on $[\xi]$.

Before we proceed, it is important to note that there will be obstructions to the global existence of a holomorphic metric or of a holomorphic volume form, not to say of a holomorphic spin structure on a complex manifold. While these issues are interesting in their own right, we shall not be concerned with them in this article, some of which are dealt with in \cite{LeBrun1983}. This being said, all our considerations will essentially be \emph{local}. In particular, we must emphasise that a spin structure can always be introduced locally, and our use of spinors in this context arises essentially from practical considerations.

What is more, a complex manifold $\mcM$ can always be manufactured by \emph{complexifying} a real-analytic oriented manifold $\mcM'$ --- see \cites{Whitney1959,Woodhouse1977,Eastwood1984}. In this case, $\mcM$ is endowed with a reality structure that singles out  $\mcM'$ as a real slice in $\mcM$. Any real analytic structure on $\mcM'$ can be extended to a holomorphic one in a neighbourhood of $\mcM'$ in $\mcM$. This will apply more particularly to a metric $g'$ and spin structure on $\mcM'$. We then obtain a spin complex Riemannian manifold $(\mcM,g)$ from  $(\mcM',g')$. This approach is typically exemplified by the study of real-analytic four-dimensional Lorentzian manifolds, which was central to the development of twistor theory --- see \cites{Penrose1984,Penrose1986} and references therein.

In fact, it is instructive to recall how \eqref{eq-totgeod} and \eqref{eq-inttotgeod} look like when $(\mcM,g)$ is a four-dimensional complex Riemannian manifold. First, $\Spin(4,\C)$ is no longer simple, but isomorphic to $\SL(2,\C)^+ \times \SL(2,\C)^-$ where $\SL(2,\C)^\pm$ are two copies of $\SL(2,\C)$ acting on $\mcS^\pm$.  Following \cite{Penrose1984}, we adorn elements of $\mcS^+$ and $\mcS^-$ with abstract indices, eg $\xi^{A'}$ and $\zeta^A$ respectively. Let us fix a projective spinor $[\xi^{A'}]$ in $\Pp \mcS^+$. Using the fact that $\Tgt \mcM \cong \mcS^- \otimes \mcS^+$ in dimension four, the relation \eqref{eq-spin2vec} simply tells us that any vector $Z^{AB'}$ tangent to the distribution defined by $\xi^{A'}$ must be of the form $Z^{AB'} = \zeta^A \xi^{B'}$ for some $\zeta^A$. Then, equation \eqref{eq-totgeod} can be re-expressed as
\begin{align}\label{eq-foliating_spinor4}
 \xi^{B'} \xi^{A'} \nabla_{AA'} \xi_{B'} & = 0 \, ,
\end{align}
where $\nabla_{AB'}$ is the Levi-Civita connection.  Similarly, condition \eqref{eq-inttotgeod} can be shown to reduce to one on the self-dual part of the Weyl tensor, which we identify with a totally symmetric spinor\footnote{This is often referred to as the \emph{Weyl spinor} in the extant literature, but we shall avoid the term in this article.} $\Psi_{A' B' C' D'}$:
\begin{align}\label{eq-poly-Weyl-spinor}
\Psi_{A' B' C' D'} \xi^{A'} \xi^{B'} \xi^{C'} \xi^{D'} & = 0 \, .
\end{align}
When $(\mcM,g)$ is the complexication of a real-analytic four-dimensional Lorentzian manifold, equation \eqref{eq-foliating_spinor4} describes a real-analytic \emph{shearfree congruence of null geodesics}, and any spinor $\xi^{A'}$ satisfying \eqref{eq-poly-Weyl-spinor} is referred to as a \emph{(gravitational) principal spinor} of $\Psi_{A' B' C' D'}$. Both concepts play an important r\^{o}le in the study of exact solutions of Einstein's field equations.

Finally, while complexifying a \emph{smooth} pseudo-Riemannian manifold will present difficulties in general, the present work can be easily adapted to the setting of an oriented and time-oriented smooth real manifold $\mcM$ equipped with a metric $g$ of signature $(m,m)$ and a spin structure, without the need of complexification. One can then define smooth \emph{real} almost null structures on $(\mcM,g)$ associated to smooth \emph{real} pure spinor fields.

An odd-dimensional analogue of the present paper is given in \cite{Taghavi-Chabert2013}.

\paragraph{Structure of the paper:} Section \ref{sec-algebra} contains a construction of a spinor calculus based on a choice of pure spinor up to scale. Proposition \ref{prop-pure-spinors} is a new algebraic characterisation of intersections of $\alpha$- and $\beta$-planes. Algebraic applications are then given in sections \ref{sec-alg_intrinsic_torsion} and \ref{sec-class-curvature}: Proposition \ref{prop-intrinsic_torsion} gives an invariant decomposition of the space of intrinsic torsions, while Propositions \ref{prop-main_Ricci}, \ref{prop-main_CY} and \ref{prop-main_Weyl} give invariant decompositions of the spaces of Ricci tensors, Cotton-York tensors and Weyl tensors respectively. 

Geometric applications can be found in section \ref{sec-geometry}: Proposition \ref{prop-intrinsic_torsion_connection} is a direct consequence of Proposition \ref{prop-intrinsic_torsion}, and characterises the intrinsic torsion of an almost null structure $\mcN_\xi$ in terms of the covariant derivative of its associated projective pure spinor $[\xi]$. Proposition \ref{prop-conformal-invariance-spinor} examines the conformal invariance of the intrinsic torsion of $\mcN_\xi$. Integrability conditions for the existence of geodetic and recurrent pure spinors are derived in Propositions \ref{prop-int_cond_foliating_spinor} and \ref{prop-int_cond_recurrent_spinor} respectively.
In section \ref{sec-spin-diff-eq}, we study the relation between solutions to differential equations on pure spinor fields: Propositions \ref{prop-foliating_twistor_spinor} and \ref{prop-foliating_twistor_spinor6} give necessary and sufficient conditions on a pure conformal Killing spinor for its associated almost null structure to be integrable. Next, we put forward Conjecture \ref{conjec-GS} generalising the complex Goldberg-Sachs theorem of \cite{Taghavi-Chabert2012}. Finally, in section \ref{sec-real-geom}, we briefly discuss the extent to which the findings of the present article can be applied to real pseudo-Riemannian manifolds.

We round up the paper with three appendices. We have collected in appendix \ref{sec-spinor-descript} material describing the $\g_0$- and $\prb$-submodules of the spaces of curvature tensors. Appendix \ref{sec-spin-calculus4-6} contains a brief discussion of spinor calculus in dimensions four and six. In appendix \ref{sec-conformal} we give some concise background on conformal spin geometry.

\section{Spinor calculus}\label{sec-algebra}
The aim of this section is to construct a spinor calculus given a preferred pure spinor, emphasising its relation with representation theory. While we recall standard facts on the theory of spinors, which can be found in one form or another in the literature \cites{Cartan1981,Budinich1988,Budinich1989,Harnad1992}, in particular the appendix of \cite{Penrose1986}, our approach, which extends the calculus of \cite{Hughston1988}, is relatively novel. Details on the representation theory aspect are given in \cites{Baston1989,Fulton1991,vCap2009}.

\subsection{Clifford algebras and spinor representations}
Let $\mfV$ be an $n$-dimensional complex vector space. We shall adopt the abstract index notation of \cites{Penrose1984} for most of this paper. Standard index-free notation will be used on occasion.
Elements of $\mfV$ and its dual $\mfV^*$ will carry upstairs and downstairs lower-case Roman indices respectively, eg $V^a \in \mfV$ and $\alpha_a \in \mfV^*$. This notation extends to tensor products of $\mfV$ and $\mfV^*$, i.e.\ we write $T \ind{_{ab}^c_d}$ for an element of $\otimes^2 \mfV^* \otimes \mfV \otimes \mfV^*$.
We equip $\mfV$ with a non-degenerate symmetric bilinear form $g_{ab} = g_{(ab)} \in \odot^2 \mfV^*$. Here, as elsewhere, symmetrisation is denoted by round brackets, while skew-symmetrisation by square brackets, eg $\alpha_{abc} = \alpha_{[abc]} \in \wedge^3 \mfV^*$. The metric tensor $g_{ab}$ together with its inverse $g^{ab}$ establishes an isomorphism between $\mfV$ and $\mfV^*$, so that one will lower or raise the indices of tensorial quantities as needed. We shall also make a choice of orientation, i.e.\ an element of $\wedge^n \mfV$, and denote the associated Hodge star operator on $\wedge^\bullet \mfV$ by $\star$. Elements of the two eigenspaces $\wedge^m_+ \mfV$ and $\wedge^m_- \mfV$ of $\star$ on $\wedge^m \mfV$ are referred to as self-dual and anti-self-dual $m$-forms respectively.

We shall be dealing with spinor representations, and for this reason, we shall essentially view any finite representation of the complex special orthogonal group $\SO(2m,\C)$ as finite representation of the spin group $G := \Spin(2m,\C)$, the two-fold covering of $\SO(2m,\C)$.

The \emph{Clifford algebra $\Cl ( \mfV , g )$ of $(\mfV , g )$} is defined as the quotient algebra $\bigotimes^\bullet \mfV / \mfI$ where $\mfI$ is the ideal generated by elements of the form $v \otimes v  + g ( v , v )$,
where $v \in \mfV$.  This implies that $\Cl(\mfV,g)$ is isomorphic to the exterior algebra $\wedge^\bullet \mfV$ as vector spaces, the wedge product of the latter being now replaced by the \emph{Clifford product} $\cdot : \Cl(\mfV,g) \times \Cl(\mfV,g) \rightarrow \Cl(\mfV,g)$ defined by $v \cdot w := v \wedge w - g(v) \lrcorner w$ for any $v$ and $w$ in $\mfV$ viewed as elements of $\Cl(\mfV,g)$.

From now on, we assume $n=2m$. Let $\mfN \subset \mfV$ be a totally null $m$-dimensional subspace, i.e.\ $g|_\mfN = 0$, and fix a dual $\mfN^*$ of $\mfN$ so that $\mfV \cong \mfN \oplus \mfN^*$. Then the vector space $\mfS := \wedge^\bullet \mfN$ can be turned into a $\Cl(\mfV,g)$-module by restricting the Clifford product to it: for any $\xi \in \wedge^\bullet \mfS$, $(v,w) \in \mfN \oplus \mfN^* \cong \mfV$, the action of $\mfV \subset \Cl(\mfV,g)$ on $\mfS$ is given by $(v,w) \cdot \xi = v \wedge  \xi - w \lrcorner  \xi$. The $2^m$-dimensional $\Cl(\mfV,g)$-module $\mfS$ is known as the \emph{spinor space} of $(\mfV,g)$. Further, $\mfS$ splits as $\mfS = \mfS^+ \oplus \mfS^- $, where $\mfS^\pm$ are the $\pm$-eigenspaces of the orientation on $\mfV$, viewed as an element of $\Cl(\mfV,g)$, with
\begin{align*}
 \mfS^+ & \cong \wedge^m \mfN \oplus \wedge^{m-2} \mfN \oplus \ldots \, , &  \mfS^- & \cong \wedge^{m-1} \mfN \oplus \wedge^{m-3} \mfN \oplus \ldots \, .
\end{align*}
The $2^{m-1}$-dimensional complex vector spaces $\mfS^+$ and $\mfS^-$ are called the \emph{positive and negative (chiral) spinor spaces} respectively, and can be shown to be irreducible representations of $G=\Spin(2m,\C)$.

It turns out that the Clifford algebra can also be realised as the algebra of complex $2^m \times 2^m$-matrices acting on $\mfS = \mfS^+ \oplus \mfS^-$. Elements of $\mfS^+$, respectively $\mfS^-$, will carry upstairs primed, respectively unprimed, upper-case Roman indices, eg $\xi^{A'}$, respectively $\alpha^A$, and similarly for their duals $(\mfS^+)^*$ and $(\mfS^-)^*$ with downstairs indices, eg $\eta_{A'}$ and $\beta_A$ respectively. As we shall be working with $\mfS^\pm$ rather than $\mfS$, it will be convenient to think of the generators of the Clifford algebra $\Cl(\mfV,g)$ in terms of the (Van der Waerden) $\gamma$-matrices $\gamma \ind{_a _A ^{B'}}$ and $\gamma \ind{_a _{A'} ^B}$, which satisfy the (reduced) Clifford property
\begin{align}\label{eq-red_Clifford_property}
 \gamma \ind{_{\lp{a}} _{A'} ^C} \gamma \ind{_{\rp{b}} _C ^{B'}} & = - g_{ab} \delta \ind*{_{A'}^{B'}} \, ,& \gamma \ind{_{\lp{a}} _A ^{C'}} \gamma \ind{_{\rp{b}} _{C'} ^B} & = - g_{ab} \delta \ind*{_A^B} \, , 
\end{align}
where $\delta_{A'}^{B'}$ and $\delta_A^B$ are the identity elements on $\mfS^+$ and $\mfS^-$ respectively.
Thus, only skew-symmetrised products of $\gamma$-matrices count, and we shall make use of the notational short hand
\begin{align}
\begin{aligned}\label{eq-multi-gamma}
 \gamma \ind{_{a_1 a_2 \ldots a_q} _A ^{B'}} & := \gamma \ind{_{\lb{a_1}} _A ^{C_1'}} \gamma \ind{_{a_2} _{C_1'} ^{C_2}} \ldots \gamma \ind{_{\rb{a_q}} _{C_{q-1}} ^{B'}} \, , & \gamma \ind{_{a_1 a_2 \ldots a_q} _{A'} ^B} & := \gamma \ind{_{\lb{a_1}} _{A'} ^{C_1}} \gamma \ind{_{a_2} _{C_1} ^{C_2'}} \ldots \gamma \ind{_{\rb{a_q}} _{C_{q-1}'} ^{B}} \, , \\
\gamma \ind{_{a_1 a_2 \ldots a_p} _A ^B} & := \gamma \ind{_{\lb{a_1}} _A ^{C_1'}} \gamma \ind{_{a_2} _{C_1'} ^{C_2}} \ldots \gamma \ind{_{\rb{a_p}} _{C_p'} ^B} \, , & \gamma \ind{_{a_1 a_2 \ldots a_p} _{A'} ^{B'}} & := \gamma \ind{_{\lb{a_1}} _{A'} ^{C_1}} \gamma \ind{_{a_2} _{C_1} ^{C_2'}} \ldots \gamma \ind{_{\rb{a_p}} _{C_{p-1}} ^{B'}} \, ,
\end{aligned}
\end{align}
where $p$ is even and $q$ is odd.
These matrices give us an explicit realisation of the isomorphism $\Cl( \mfV, g_{ab} ) \cong \bigwedge^\bullet \mfV$ as vector spaces. Since $\star: \wedge^k \mfV \overset{\cong}{\rightarrow} \wedge^{2m-k} \mfV$, it is enough to consider forms of degree from $0$ to $m$.

The spinor space $\mfS$ and its dual $\mfS^*$ are equipped with non-degenerate bilinear forms, which realise the isomorphisms
\begin{align}\label{eq-iso-spin}
\begin{aligned}
\gamma \ind{_{A'B'}} \, , \gamma \ind{_{AB}} : \mfS^\pm & \overset{\cong}{\longrightarrow}(\mfS^\pm)^* \, , & & \qquad \mbox{when $m$ even,} \\
\gamma \ind{_{A'B}} \, , \gamma \ind{_{AB'}} : \mfS^\pm & \overset{\cong}{\longrightarrow} (\mfS^\mp)^* \, , & & \qquad \mbox{when $m$ odd,}
 \end{aligned}
\end{align}
by means of which we can raise or lower spinor indices. Thus, the $\gamma$-matrices \eqref{eq-multi-gamma} give rise to bilinear maps
\begin{align}
\begin{aligned}\label{eq-gam-spin}
 \gamma \ind{_{a_1 a_2 \ldots a_p} _{A'B'}} \, , \quad  & \gamma \ind{_{a_1 a_2 \ldots a_p} _{AB}} \, , & & \mbox{for $p \equiv m \pmod{2}$,} \\
 \gamma \ind{_{a_1 a_2 \ldots a_p} _{A'B}} \, ,  \quad & \gamma \ind{_{a_1 a_2 \ldots a_p} _{AB'}} \, , & & \mbox{for $p \equiv m-1 \pmod{2}$,}
\end{aligned}
\end{align}
from $\mfS^\pm \times \mfS^\pm$ or $\mfS^\pm \times \mfS^\mp$ to $\wedge^\bullet \mfV$. The spinor indices of the maps \eqref{eq-iso-spin} and \eqref{eq-gam-spin} are subject to symmetries as explained in \cite{Penrose1986}, and this allows us to prove the following technical lemma needed subsequently.
\begin{lem}\label{lem-technical}
When $m-p$ is even,
\begin{multline*}
 \gamma \ind{_a_{A'}^B} \gamma \ind{_{b_1 \ldots b_p}_{BD}} \gamma \ind{_c_{C'}^D} = (-1)^m \left( \gamma \ind{_{c a b_1 \ldots b_p}_{A'C'}} + g \ind{_{ca}} \gamma \ind{_{b_1 \ldots b_{p-1} b_p}_{A'C'}} \right. \\
 \left. - 2 \, p \, g \ind{_{\lb{b_1}|\lp{a}}} \gamma \ind{_{\rp{c}|b_2 \ldots \rb{b_p}}_{A'C'}} + p (p+1) g \ind{_{a \lb{b_1}}} g \ind{_{|c|b_2}} \gamma \ind{_{b_3 \ldots b_{p-1}\rb{b_p}}_{A'C'}} \right)
\end{multline*}
In particular,
\begin{align*}
 \gamma \ind{^a_{A'}^B} \gamma \ind{_{b_1 \ldots b_p}_{BD}} \gamma \ind{_a_{C'}^D} & = 
 (-1)^m 2 ( m - p)\gamma \ind{_{b_1 \ldots b_p}_{A'C'}} \, .
\end{align*}
 When $m-p$ is odd,
\begin{multline*}
 \gamma \ind{_a_{A'}^B} \gamma \ind{_{b_1 \ldots b_p}_{BD'}} \gamma \ind{_c_C^{D'}} = (-1)^{m-1} \left( \gamma \ind{_{c a b_1 \ldots b_p}_{A'C}} + g \ind{_{ca}} \gamma \ind{_{b_1 \ldots b_{p-1} b_p}_{A'C}} \right. \\
 \left. - 2 p g \ind{_{\lb{b_1}|\lp{a}}} \gamma \ind{_{\rp{c}|b_2 \ldots \rb{b_p}}_{A'C}} + p (p+1) g \ind{_{a \lb{b_1}}} g \ind{_{|c|b_2}} \gamma \ind{_{b_3 \ldots b_{p-1}\rb{b_p}}_{A'C}} \right)
\end{multline*}
In particular,
\begin{align*}
 \gamma \ind{^a_{A'}^B} \gamma \ind{_{b_1 \ldots b_p}_{BD'}} \gamma \ind{_a_C^{D'}} & = 
 (-1)^{m-1} 2 ( m - p)\gamma \ind{_{b_1 \ldots b_p}_{A'C}} \, .
\end{align*}
\end{lem}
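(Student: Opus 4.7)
\emph{Proof plan.} The strategy is to reduce everything to the Clifford relation \eqref{eq-red_Clifford_property} together with the symmetry of the $\Spin(2m,\C)$-invariant bilinear forms on $\mfS^\pm$. The first step is to establish, by induction on $p$ from the anticommutation relation $\gamma_a\gamma_b + \gamma_b\gamma_a = -2g_{ab}$, the basic expansion formulas
\begin{align*}
\gamma_a \gamma_{b_1 \ldots b_p} &= \gamma_{a b_1 \ldots b_p} - p\, g_{a[b_1}\gamma_{b_2 \ldots b_p]}, \\
\gamma_{b_1 \ldots b_p} \gamma_c &= \gamma_{b_1 \ldots b_p c} + (-1)^p p\, g_{c[b_1}\gamma_{b_2 \ldots b_p]},
\end{align*}
for multi-indexed $\gamma$-matrices; these peel off one contraction at a time.

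Applying these successively to $\gamma_a \gamma_{b_1 \ldots b_p} \gamma_c$ yields an expansion as a sum of Clifford elements of ranks $p+2$, $p$, and $p-2$. The rank-$(p+2)$ piece is $\gamma_{ab_1 \ldots b_p c}$, which one rewrites as $\gamma_{cab_1 \ldots b_p}$ at the cost of a sign $(-1)^{p+1}$ from index reordering. The rank-$p$ contributions, collected using the symmetry of $g_{ab}$, organise into a multiple of $g_{ca}\gamma_{b_1 \ldots b_p}$ together with the symmetrised expression $g_{[b_1|(a}\gamma_{c)|b_2 \ldots b_p]}$, while the rank-$(p-2)$ piece is the double-contraction $g_{a[b_1}g_{|c|b_2}\gamma_{b_3 \ldots b_p]}$. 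The bulk of the work is bookkeeping of the numerical coefficients through these successive substitutions.

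The delicate part is accounting for the uniform sign $(-1)^m$ (resp.\ $(-1)^{m-1}$) when $m-p$ is even (resp.\ odd). This combines the factor $(-1)^{p+1}$ from the index reordering above with an $m$-dependent sign arising when one expresses the bare matrix product (which a priori produces $\gamma$-matrices with one upper and one lower spinor index) in the stated form, with both spinor indices lowered via the $\Spin(2m,\C)$-invariant bilinear form on $\mfS^\pm$ --- the relevant sign from the symmetry of this bilinear form depends on $m$ modulo $4$. Verifying that these two contributions conspire to give the uniform factors is the main obstacle, and it is essentially a representation-theoretic statement about the pairing on chiral spinors.

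Finally, the contraction identities follow immediately from the master formulas by contracting with $g^{ac}$. The rank-$(p+2)$ term vanishes by antisymmetry; the rank-$(p-2)$ term collapses to a multiple of $g_{[b_1 b_2}\gamma_{b_3 \ldots b_p]}$, which vanishes since $g_{ab}$ is symmetric but the surrounding antisymmetrisation is over the $b$'s. The surviving contributions are $g^{ac}g_{ca}\gamma_{b_1 \ldots b_p} = 2m\gamma_{b_1 \ldots b_p}$ from the $g_{ca}\gamma_{b_1 \ldots b_p}$ piece and $-2p\gamma_{b_1 \ldots b_p}$ from the cross contraction of the symmetrised rank-$p$ term, which combine to yield $2(m-p)\gamma_{b_1 \ldots b_p}$ multiplied by the overall sign.
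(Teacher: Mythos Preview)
The paper does not actually prove this lemma: it is stated ``without proof'', with only a footnote remarking that ``the proof of the lemma makes use of the symmetry properties of the $\gamma$-matrices, which can be found in \cite{Penrose1986}.'' Your proposal is therefore not competing with any argument in the paper, but rather supplying one where the paper declines to.

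That said, your plan is exactly in the spirit of the paper's hint. The inductive expansion formulas for $\gamma_a\gamma_{b_1\ldots b_p}$ and $\gamma_{b_1\ldots b_p}\gamma_c$ are the standard Clifford-algebra reductions, and your bookkeeping of the rank-$(p+2)$, rank-$p$, and rank-$(p-2)$ pieces is correct. Your handling of the contracted identity is also right: the top and bottom pieces vanish by antisymmetry, and the two surviving rank-$p$ contributions give $2m$ and $-2p$ respectively. The one place where the paper's footnote is genuinely needed is precisely where you flag it --- the overall sign $(-1)^m$ versus $(-1)^{m-1}$ comes from the $m$-dependent symmetry of the $\Spin(2m,\C)$-invariant bilinear form used to lower the spinor indices on $\gamma_{b_1\ldots b_p}$, and this is the content of the tables in \cite{Penrose1986} that the paper defers to. So your proof is complete modulo quoting that symmetry property, which is all the paper itself does.
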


Our treatment will be overwhelmingly dimension independent, and for this reason, we shall avoid making use of the bilinear forms \eqref{eq-iso-spin} and \eqref{eq-gam-spin}. It suffices to say that when $p=m$, the bilinear forms \eqref{eq-gam-spin} are always symmetric, and yield injections from $\wedge^m_\pm \mfV$ to $\odot^2 \mfS^\pm$, and surjections from $\odot^2 \mfS^\pm$ to $\wedge^m_\pm \mfV^*$.

\subsection{Null structures and pure spinors}
\begin{defn}
 A \emph{null structure} on $\mfV$ is an $m$-dimensional vector subspace $\mfN \subset \mfV$ that is totally null, i.e. $g_{ab} X^a Y^b = 0$ for all $X^a,Y^a \in \mfN$. A self-dual, respectively anti-self-dual, null structure is called an \emph{$\alpha$-plane}, respectively, a \emph{$\beta$-plane}.
\end{defn}

Let $\xi \ind*{^{A'}}$ be a non-zero spinor in $\mfS^+$, and consider the map
\begin{align*}
 \xi \ind*{_a^A} := \xi \ind*{^{B'}} \gamma \ind{_a_{B'}^A} : \mfV \rightarrow \mfS^- \, .
\end{align*}
By \eqref{eq-red_Clifford_property}, the kernel of $\xi \ind*{_a^A} : \mfV \rightarrow \mfS^-$ is totally null.
\begin{defn}
 A non-zero (positive) spinor $\xi \ind*{^{A'}}$ is said to be \emph{pure} if the kernel of $\xi \ind*{_a^A}: \mfV \rightarrow \mfS^-$ is $m$-dimensional, and thus defines a null structure. 
 
 The projectivisation of the line $\langle \xi^{A'} \rangle$ spanned by a pure spinor $\xi^{A'}$ will be referred to as a \emph{projective (positive) pure spinor} $[\xi^{A'}] \in \Pp \mfS^+$.
 
The same definitions apply to a negative spinor.
\end{defn}
Leaving the details aside, one can show
\begin{prop}[\cite{Cartan1981}]\label{prop-fundamental}
There is a one-to-one correspondence between projective pure spinors and null structures on $(\mfV,g)$. Positive, respectively negative, pure spinors correspond to self-dual, respectively anti-self-dual, null structures.
\end{prop}

Henceforth, we shall assume $m>2$ leaving the special case $m=2$ to appendix \ref{sec-spin-calculus4}. For the remaining of this section and sections \ref{sec-alg_intrinsic_torsion} and \ref{sec-class-curvature}, $\xi \ind*{^{A'}}$ will denote a positive pure spinor. It goes without saying that our statements apply analogously to negative pure spinors. We set
\begin{align}\label{eq-basic-defn}
 \mfS^{\frac{m}{4}} & := \langle \xi \ind*{^{A'}} \rangle \, , & \mfS^{\frac{m-2}{4}} & := \im \xi \ind*{_a^A} : \mfV \rightarrow \mfS^- \, , &
 \mfV^{-\frac{1}{2}} & := \mfV \, , & \mfV^{\frac{1}{2}} & := \ker \xi \ind*{_a^A} : \mfV \rightarrow \mfS^-\, ,
\end{align}
so that one can express the $\alpha$-plane associated to $\xi^{A'}$ as the filtration
\begin{align}\label{eq-null-structure}
\{ 0 \} =: \mfV^{\frac{3}{2}} \subset \mfV^{\frac{1}{2}} \subset \mfV^{-\frac{1}{2}} \, .
\end{align}
The full meaning of this notation, borrowed from \cite{vCap2009}, will be explained in the course of this section. For the moment, the reader should think of these numerical indices as homogeneity degrees. Thus, the map $\xi \ind*{_a^A}$ yields an isomorphism between  $\mfV^{-\frac{1}{2}} / \mfV^{\frac{1}{2}}$ and $\mfS^{\frac{m-2}{4}}$, which we can write as
\begin{align}\label{eq-iso}
 \left( \mfV^{-\frac{1}{2}} / \mfV^{\frac{1}{2}} \right) \otimes \mfS^{\frac{m}{4}} & \cong \mfS^{\frac{m-2}{4}} \, .
\end{align}
While the factor $\mfS^{\frac{m}{4}}$ on the LHS of \eqref{eq-iso} may appear notationally redundant, it nonetheless balances the degrees on each side of \eqref{eq-iso}, i.e.\ $-\frac{1}{2} + \frac{m}{4} = \frac{m-2}{4}$. From \eqref{eq-iso}, it is also clear that $\mfS^{\frac{m-2}{4}}$ is an $m$-dimensional subspace of $\mfS^-$. 

With a slight abuse of notation, we can also think of the map $\xi \ind*{_a^A}$ dually as $\xi \ind*{_a^A} : \mfV^* \leftarrow (\mfS^-)^*$ so that the dual counterpart of \eqref{eq-iso} is given by
\begin{align}\label{eq-dual_iso}
 \mfV^{\frac{1}{2}} & \cong \mfS^{\frac{m}{4}} \otimes \left( \mfS^{-\frac{m-2}{4}} / \mfS^{-\frac{m-6}{4}}  \right) \, ,
\end{align}
where we have defined
\begin{align*}
 \mfS^{-\frac{m-2}{4}} & := (\mfS^-)^* \, , & \mfS^{-\frac{m-6}{4}} & := \ker \xi \ind*{_a^A} : \mfV^* \leftarrow (\mfS^-)^*\, , 
\end{align*}
and made use of $\mfV^{\frac{1}{2}} \cong \left( \mfV^{-\frac{1}{2}} / \mfV^{\frac{1}{2}} \right)^*$. Isomorphism \eqref{eq-dual_iso} can be expressed concretely as follows.
\begin{lem}[\cites{Hughston1988,Budinich1989}]\label{lem-vector_decomposition}
A non-zero vector $V^a$ is an element of $\mfV^{\frac{1}{2}}$ if and only if $V^a = \xi \ind*{^a ^B} v_B$ for some non-zero spinor $v_A$ in $\mfS^{-\frac{m-2}{4}}/\mfS^{-\frac{m-6}{4}}$.
\end{lem}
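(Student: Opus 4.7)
The plan is to deduce the lemma from dualizing the short exact sequence
\begin{align*}
0 \longrightarrow \mfV^{\frac{1}{2}} \longrightarrow \mfV \xrightarrow{\xi_a{}^A} \mfS^{\frac{m-2}{4}} \longrightarrow 0
\end{align*}
built into \eqref{eq-basic-defn}, exploiting both the self-duality $\mfV \cong \mfV^*$ induced by $g_{ab}$ and the fact that a maximal totally null subspace such as $\mfV^{\frac{1}{2}}$ coincides with its own $g$-orthogonal complement $(\mfV^{\frac{1}{2}})^\perp$.

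For the \emph{if} direction, let $v_B \in (\mfS^-)^* = \mfS^{-\frac{m-2}{4}}$ and set $V^a := \xi^{aB} v_B$. Since $\mfV^{\frac{1}{2}} = (\mfV^{\frac{1}{2}})^\perp$, to show $V^a \in \mfV^{\frac{1}{2}}$ it is enough to check $g_{ab} V^a W^b = 0$ for every $W^b \in \mfV^{\frac{1}{2}}$. A direct computation gives $V^a W_a = v_B (W^a \xi_a{}^B)$, which vanishes by the defining property $\xi_a{}^B W^a = 0$ of elements of $\mfV^{\frac{1}{2}}$.

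For the \emph{only if} direction, I would transpose the primal map $\xi_a{}^A : \mfV \to \mfS^-$ to obtain the dual map $\xi_a{}^A : (\mfS^-)^* \to \mfV^*$, whose image is the annihilator $\Ann(\mfV^{\frac{1}{2}}) \subset \mfV^*$. Identifying $\mfV^* \cong \mfV$ via $g$, this annihilator corresponds to $(\mfV^{\frac{1}{2}})^\perp = \mfV^{\frac{1}{2}}$; hence any $V^a \in \mfV^{\frac{1}{2}}$ equals $\xi^{aB} v_B$ for some $v_B \in (\mfS^-)^*$. Furthermore, the kernel of the dual map is the annihilator of $\im \xi_a{}^A = \mfS^{\frac{m-2}{4}}$, which is $\mfS^{-\frac{m-6}{4}}$ by definition, so $v_B$ descends to a uniquely determined non-zero class in $\mfS^{-\frac{m-2}{4}} / \mfS^{-\frac{m-6}{4}}$.

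I do not expect a genuine obstacle: the content is pure linear-algebraic duality combined with the self-annihilating property of maximal null planes. The one point requiring care is to keep the $g$-identification $\mfV \cong \mfV^*$ and the consequent raising and lowering of the vectorial index on $\xi_a{}^A$ bookkept consistently with the dualization of the sequence, so that the formula $V^a = \xi^{aB} v_B$ literally represents the inverse isomorphism underlying \eqref{eq-dual_iso}.
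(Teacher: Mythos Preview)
Your argument is correct. The paper does not actually supply its own proof of this lemma: it is stated with citations to \cite{Hughston1988} and \cite{Budinich1989} and no argument is given. That said, the surrounding text sets the lemma up precisely along your lines --- the paper introduces the dual map $\xi \ind{_a^A} : \mfV^* \leftarrow (\mfS^-)^*$, defines $\mfS^{-\frac{m-6}{4}}$ as its kernel, and invokes the identification $\mfV^{\frac{1}{2}} \cong \left( \mfV^{-\frac{1}{2}} / \mfV^{\frac{1}{2}} \right)^*$ (which is exactly your $(\mfV^{\frac{1}{2}})^\perp = \mfV^{\frac{1}{2}}$ under the metric isomorphism) to record the dual isomorphism \eqref{eq-dual_iso}, of which the lemma is declared to be the concrete expression. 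So your dualization of the short exact sequence is not merely compatible with the paper's viewpoint but is the natural completion of the sketch the paper already provides.
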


Since $\mfV^{\frac{1}{2}}$ is a totally null $m$-dimensional vector subspace, we can now conclude
\begin{prop}[\cite{Hughston1988}]\label{prop-purity_cond}
A non-zero spinor $\xi^{A'}$ is pure if and only if it satisfies
\begin{align}\label{eq-purity_cond}
 \xi \ind*{^a ^A} \xi \ind*{_a^B} & = 0 \, .
\end{align} 
\end{prop}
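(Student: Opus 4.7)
The plan is to reinterpret the equation $\xi^{aA}\xi_a{}^B = 0$ as an assertion about the linear map $T := \xi_a{}^A : \mfV \to \mfS^-$, and then reduce purity to a dimension count. The key input, already established before the proposition, is that $\mfV^{\frac{1}{2}} := \ker T$ is automatically totally null by the Clifford property \eqref{eq-red_Clifford_property}; in particular $\dim \mfV^{\frac{1}{2}} \le m$, with equality being the very definition of purity.

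First, I would introduce the map $T^{\sharp} : (\mfS^-)^* \to \mfV$, $v_A \mapsto \xi^{aA} v_A$, obtained from the transpose of $T$ by raising the $\mfV^*$-index with $g^{ab}$. A one-line index calculation shows that the composition $T \circ T^{\sharp} : (\mfS^-)^* \to \mfS^-$ is represented precisely by the tensor $\xi^{aA}\xi_a{}^B$. Hence equation \eqref{eq-purity_cond} is equivalent to the inclusion $\im T^{\sharp} \subseteq \ker T = \mfV^{\frac{1}{2}}$.

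The next step is a general linear-algebra identification: $\im T^{\sharp}$ equals the $g$-orthogonal complement $(\mfV^{\frac{1}{2}})^{\perp}$. This follows from the elementary fact that the image of the transpose of a linear map between finite-dimensional vector spaces equals the annihilator of its kernel, together with the translation of annihilators into $g$-orthogonal complements under the metric isomorphism $\mfV \cong \mfV^*$. So \eqref{eq-purity_cond} is equivalent to $(\mfV^{\frac{1}{2}})^{\perp} \subseteq \mfV^{\frac{1}{2}}$.

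Since $\mfV^{\frac{1}{2}}$ is totally null the reverse inclusion $\mfV^{\frac{1}{2}} \subseteq (\mfV^{\frac{1}{2}})^{\perp}$ holds automatically. Combined with the above, \eqref{eq-purity_cond} forces the equality $\mfV^{\frac{1}{2}} = (\mfV^{\frac{1}{2}})^{\perp}$, and then $\dim \mfV^{\frac{1}{2}} + \dim (\mfV^{\frac{1}{2}})^{\perp} = 2m$ gives $\dim \mfV^{\frac{1}{2}} = m$, i.e.\ $\xi^{A'}$ is pure. Conversely, if $\xi^{A'}$ is pure then $\mfV^{\frac{1}{2}}$ is a maximal totally null subspace, hence self-orthogonal, so $\im T^{\sharp} = (\mfV^{\frac{1}{2}})^{\perp} = \mfV^{\frac{1}{2}} = \ker T$ and $T \circ T^{\sharp} = 0$. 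The only step requiring genuine care is the identification $\im T^{\sharp} = (\mfV^{\frac{1}{2}})^{\perp}$ in the spinor-index notation; everything else is dimension counting together with the previously recorded totally-null property of $\mfV^{\frac{1}{2}}$.
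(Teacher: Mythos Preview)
Your proof is correct and follows essentially the same idea as the paper's: both hinge on relating the image of the dual map $v_A \mapsto \xi^{aA}v_A$ to the kernel $\mfV^{\frac{1}{2}}$ of $\xi_a{}^A$. The paper phrases this via Lemma \ref{lem-vector_decomposition} (derived under the purity assumption) and then deduces \eqref{eq-purity_cond} in one line; your version is somewhat more self-contained in that you make explicit the general linear-algebraic identity $\im T^{\sharp} = (\ker T)^{\perp}$, which lets you treat both directions of the equivalence symmetrically and cleanly via the dimension count $\dim \mfV^{\frac{1}{2}} + \dim (\mfV^{\frac{1}{2}})^{\perp} = 2m$.
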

Applying Lemma \ref{lem-technical} to Proposition \ref{prop-purity_cond}, one recovers the following well-known characterisation of pure spinors due to Cartan.
\begin{prop}[\cite{Cartan1981}]\label{prop-Cartan_char}
A non-zero spinor $\xi^{A'}$ is pure if and only if it satisfies
\begin{align}
\left. \begin{aligned}\label{eq-purity_cond_Cartan}
 \gamma \ind{_{a_1 \ldots a_p} _{A' B'}} \xi^{A'} \xi^{B'} & = 0 \, , & \mbox{for all $p < m$, $p \equiv m \pmod 4$,} \\
  \gamma \ind{_{A' B'}} \xi^{A'} \xi^{B'} & = 0 \, , & \mbox{when $m = 0 \pmod 2$,} \\
  \gamma \ind{_{a_1 \ldots a_m} _{A' B'}} \xi^{A'} \xi^{B'} & \neq 0 \, .
  \end{aligned} \right\}
\end{align}
In particular, all non-zero spinors are pure when $m \leq 3$
\end{prop}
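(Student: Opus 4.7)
The plan is to reduce the statement to Proposition \ref{prop-purity_cond} by a direct application of Lemma \ref{lem-technical}. Starting from the purity condition $\xi^{aA} \xi_a{}^B = 0$, I substitute $\xi_a{}^B = \xi^{B'} \gamma_{aB'}{}^B$ and contract with $\gamma_{b_1 \ldots b_p AB}$. For $m - p$ even, Lemma \ref{lem-technical} then yields
\[
\gamma_{b_1 \ldots b_p AB} \, \xi^{aA} \xi_a{}^B \;=\; (-1)^m \, 2(m - p) \, \xi^{A'} \xi^{B'} \gamma_{b_1 \ldots b_p A'B'} .
\]
This single identity carries essentially all the content of the proposition; what remains is organising parity and spanning information.

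For the forward direction, purity makes the left-hand side vanish, and since $2(m - p) \neq 0$ for $p < m$, each equation in \eqref{eq-purity_cond_Cartan} follows immediately. The parity restriction $m - p \equiv 0 \pmod 4$ selects precisely those $p$ for which $\gamma_{b_1 \ldots b_p A'B'}$ is symmetric in $A'B'$; when $m - p \equiv 2 \pmod 4$ the same $\gamma$-matrix is antisymmetric in $A'B'$, so the bilinear $\xi^{A'} \xi^{B'} \gamma_{b_1 \ldots b_p A'B'}$ vanishes trivially and contributes no constraint.

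For the converse, I appeal to the classical spinorial decomposition
\[
\odot^2 \mfS^- \;\cong\; \bigoplus_{\substack{0 \leq p \leq m \\ m - p \equiv 0 \pmod 4}} \wedge^p \mfV^* ,
\]
with the top summand $p = m$ understood as self-dual (or anti-self-dual) $m$-forms. The projection of $\xi^{aA} \xi_a{}^B \in \odot^2 \mfS^-$ onto the $p$-th summand is realised by the contraction with $\gamma_{b_1 \ldots b_p AB}$, and the displayed identity above shows that this projection equals a non-zero scalar multiple of $\omega^{(p)} := \xi^{A'}\xi^{B'}\gamma_{b_1 \ldots b_p A'B'}$ for $p < m$, while for $p = m$ the prefactor $(m - p)$ makes the projection vanish automatically. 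Hence, if $\omega^{(p)} = 0$ for every $p < m$ with $m - p \equiv 0 \pmod 4$, then every irreducible component of $\xi^{aA}\xi_a{}^B$ vanishes, so $\xi^{aA}\xi_a{}^B = 0$ and purity follows from Proposition \ref{prop-purity_cond}.

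Finally, the non-vanishing $\omega^{(m)} \neq 0$ is forced by $\xi^{A'} \neq 0$: the analogous decomposition of $\odot^2 \mfS^+$ expresses $\xi^{A'} \xi^{B'}$ as the collection $\{\omega^{(p)}\}$ over all admissible $p \leq m$, so if the top piece also vanished one would conclude $\xi^{A'}\xi^{B'} = 0$ and hence $\xi^{A'} = 0$, contradicting the hypothesis. The main obstacle is the background spinorial decomposition of $\odot^2 \mfS^\pm$ together with the symmetry classification of the $\gamma$-matrices; but these are standard facts, deducible from a dimension count and the symmetry tables of \cite{Penrose1986}, and the entire calculational content of the proof reduces to the single application of Lemma \ref{lem-technical}.
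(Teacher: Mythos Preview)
Your proof is correct and follows precisely the route indicated by the paper, which merely states that Proposition \ref{prop-Cartan_char} is obtained by ``applying Lemma \ref{lem-technical} to Proposition \ref{prop-purity_cond}''. You have simply spelled out this application in full, including the converse via the decomposition of $\odot^2 \mfS^\pm$ and the non-vanishing of the top $m$-form, all of which are implicit in the paper's one-line justification.
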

We shall refer to both equations \eqref{eq-purity_cond} and \eqref{eq-purity_cond_Cartan} as the \emph{purity conditions} of a spinor $\xi^{A'}$.

Proposition \ref{prop-Cartan_char} tells us that the only non-trivial irreducible component of the tensor product $\xi^{A'} \xi^{B'}$ of a pure spinor $\xi^{A'}$ lies in $\wedge^m_+ \mfV$. In fact, the self-dual $m$-form
$\phi \ind{_{a_1 \ldots a_m}} := \gamma \ind{_{a_1 \ldots a_m} _{A' B'}} \xi^{A'} \xi^{B'}$ annihilates $\mfV^{\frac{1}{2}}$, i.e.\ $\xi \ind*{^{a_1}^A} \phi \ind{_{a_1 a_2 \ldots a_m}} = 0$. In particular, it must be \emph{null} (or \emph{simple} or \emph{decomposable}), ie
\begin{align*}
\phi \ind{_{a_1 \ldots a_m}} & = \xi \ind*{_{a_1}^{A_1}} \ldots \xi \ind*{_{a_m}^{A_m}} \varepsilon \ind*{_{A_1 \ldots A_m}} \quad \in \quad \wedge^m \mfV^{\frac{1}{2}} \, , &
\mbox{for some $\varepsilon \ind{_{A_1 \ldots A_m}} \in \wedge^m \left(\mfS^{-\frac{m-2}{4}}/\mfS^{-\frac{m-6}{4}} \right)$.}
\end{align*}

The next proposition generalises Proposition \ref{prop-Cartan_char} in a certain sense.
\begin{prop}[\cite{Cartan1981}]\label{prop-pure-spinors-Cartan}
 Let $\alpha^{A'}$ and $\beta^{A}$ be two pure spinors of opposite chirality. Then the $\alpha$-plane associated to $\alpha^{A'}$ intersects the $\beta$-plane associated to $\beta^A$ in a totally null $k$-plane where $k \equiv  m-1 \pmod 2$ and $k \leq m-1$ if and only if
\begin{align}
\left. \begin{aligned}\label{eq-pure_pair2}
 \gamma \ind{_{a_1 a_2 \ldots a_p} _{A' B}} \alpha^{A'} \beta^B & = 0 \, , & \qquad \qquad \mbox{for all $p < k$, $p \equiv k \pmod 2$,} \\
  \gamma_{A' B} \alpha^{A'} \beta^B & = 0 \, , & \qquad \qquad \mbox{when $m \equiv 1 \pmod 2$,} \\
 \gamma \ind{_{a_1 a_2 \ldots a_k} _{A' B}} \alpha^{A'} \beta^B & \neq 0 \, .
 \end{aligned}
 \right\}
\end{align}

Let $\beta^B$ and $\rho^A$ be any two negative pure spinors not proportional to each other. Then the $\beta$-planes associated to $\beta^B$ and $\rho^A$ intersect in a totally null $k$-plane where $k \equiv m \pmod 2$ and $k \leq m-2$ if and only if
\begin{align}
\left. \begin{aligned}\label{eq-intersect_pure_pair2}
 \gamma \ind{_{a_1 a_2 \ldots a_p} _{A B}} \beta^A \rho^B & = 0 \, , & \qquad \qquad \mbox{for all $p < k$, $p \equiv k \pmod 2$,} \\
  \gamma_{A B} \beta^A \rho^B  & = 0 \, , & \qquad \qquad \mbox{when $m \equiv 0 \pmod 2$,} \\
 \gamma \ind{_{a_1 a_2 \ldots a_k} _{A B}} \beta^A \rho^B & \neq 0 \, .
 \end{aligned}
 \right\}
\end{align}
The same result holds for any two positive pure spinors not proportional to each other.
\end{prop}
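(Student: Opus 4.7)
The plan is to reduce both parts of the proposition to direct computations in the explicit spinor model $\mfS \cong \wedge^\bullet \mfN$ of Remark \ref{rem-alternative}, where $\mfN$ is a suitably chosen maximal totally null subspace of $\mfV$. The guiding principle is that pure spinors are in bijection (up to scale) with maximal totally null $m$-planes, and that the bispinor $\alpha^{A'} \beta^B$, expanded into its $p$-form components via the $\gamma$-matrices, encodes the intersection geometry of the associated planes.

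For the first statement, I would take $\mfN$ to be the $\alpha$-plane of $\alpha^{A'}$, so that in any basis $\{e_1, \ldots, e_m\}$ of $\mfN$ one has $\alpha^{A'} \propto e_1 \wedge \cdots \wedge e_m \in \wedge^m \mfN \subset \mfS^+$. Denoting by $\mfN_\beta$ the $\beta$-plane of $\beta^A$ and supposing $\dim(\mfN \cap \mfN_\beta) = k$, one may adapt the basis so that $e_1, \ldots, e_k$ spans $\mfN \cap \mfN_\beta$, and extend to a Witt basis $\{e_1, \ldots, e_m, f_1, \ldots, f_m\}$ of $\mfV$ (with $g(e_i, f_j) = \delta_{ij}$ and all other pairings zero) chosen so that $\mfN_\beta = \sspan(e_1, \ldots, e_k, f_{k+1}, \ldots, f_m)$. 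Purity of $\beta^A$ then forces $\beta^A \propto e_1 \wedge \cdots \wedge e_k \in \wedge^k \mfN$; the required chirality $\beta^A \in \mfS^-$ yields the parity constraint $m - k$ odd, which matches the restriction implicit in $k - p \equiv 0 \pmod 2$ together with the fact that $\gamma \ind{_{a_1 \ldots a_p} _{A' B}}$ is non-vanishing only when $m - p$ is odd.

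With $\alpha^{A'}$ and $\beta^A$ in this normal form, the contractions $\gamma \ind{_{a_1 \ldots a_p} _{A' B}} \alpha^{A'} \beta^B$ can be evaluated directly using the Clifford action $e_i \cdot (\cdot) = e_i \wedge (\cdot)$, $f_j \cdot (\cdot) = -e_j \lrcorner (\cdot)$ on $\wedge^\bullet \mfN$. A short calculation in the adapted basis shows that these $p$-forms vanish for every admissible $p < k$ and produce, at $p = k$, a non-zero multiple of $f_1 \wedge \cdots \wedge f_k$ (or an explicit decomposable $k$-form related to it by the Hodge isomorphism). For the converse, the $\Spin(2m,\C)$-equivariant map $\mfS^+ \otimes \mfS^- \to \bigoplus_p \wedge^p \mfV$ obtained by letting $p$ range over all admissible values is a linear isomorphism (being essentially the decomposition of the Clifford algebra into its graded pieces). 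Hence the pattern of vanishing of the $p$-form contractions uniquely determines $\alpha^{A'} \otimes \beta^B$, and reversing the direct computation pins down the intersection dimension.

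The second statement, for two pure spinors $\beta^B, \rho^A$ of the same (negative) chirality, is proved by the same template but with the orientation chosen so that $\wedge^m \mfN \subset \mfS^-$; taking $\mfN$ equal to the $\beta$-plane of $\beta^B$ gives $\beta^B \propto e_1 \wedge \cdots \wedge e_m$ and $\rho^A \propto e_1 \wedge \cdots \wedge e_k$ in the adapted basis, now with the parity restriction $m - k$ even matching the admissibility of $\gamma \ind{_{a_1 \ldots a_p} _{A B}}$. The main technical obstacle is the bookkeeping of sign factors, chirality conventions, and the bilinear forms used to raise and lower spinor indices, all of which depend on the parity of $m$; these complications, however, do not affect the essential exterior-algebra computation once the adapted basis is in place.
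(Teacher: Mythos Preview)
The paper does not supply its own proof of this proposition; it is quoted as a classical result of Cartan \cite{Cartan1981} and then used as input for the special cases in Proposition \ref{prop-pure-spinors} and Corollary \ref{prop-pure-spinors3}. Your outline, which reduces to an explicit computation in the exterior-algebra model $\mfS \cong \wedge^\bullet \mfN$ of Remark \ref{rem-alternative} with a Witt basis adapted to the intersection, is essentially the classical argument and is sound. Your parity bookkeeping is also correct: $\gamma \ind{_{a_1\ldots a_p}_{A'B}}$ is non-trivial precisely when $m-p$ is odd, and $\beta^A \in \wedge^k \mfN \subset \mfS^-$ forces $m-k$ odd, so the admissible $p$ automatically satisfy $k-p \equiv 0 \pmod 2$; the same-chirality case works with $m-p$ and $m-k$ both even.

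Two small points. First, your converse is over-engineered: once the forward computation establishes that an intersection of dimension $k'$ produces vanishing for every admissible $p<k'$ and a non-zero contraction at $p=k'$, the converse follows immediately, since the actual intersection has \emph{some} dimension $k'$ and the observed vanishing pattern forces $k'=k$. You do not need to invoke the full Fierz isomorphism $\mfS^+ \otimes \mfS^- \cong \bigoplus_p \wedge^p \mfV$ (which, as stated, would in any case require a restriction on the range of $p$ to match dimensions). Second, the assertion that the $k$-form contraction is ``a non-zero multiple of $f_1\wedge\cdots\wedge f_k$'' is morally right but sensitive to the choice of $\Spin(2m,\C)$-invariant bilinear form on $\mfS$; what one obtains is the decomposable $k$-form spanning $\wedge^k(\mfN\cap\mfN_\beta)^*$ up to a non-zero constant, and identifying this with $f_1\wedge\cdots\wedge f_k$ uses the metric pairing between $\mfN$ and $\mfN^*$. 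This is exactly the ``bookkeeping'' you already flag, and it does not affect the vanishing/non-vanishing dichotomy that the proposition is about.
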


An application of Lemma \ref{lem-technical} leads to the following reformulation of Proposition \ref{prop-pure-spinors-Cartan} when $k=m-1$ and $k=m-2$.
\begin{prop}\label{prop-pure-spinors}
 Let $\alpha^{A'}$ and $\beta^{A}$ be two pure spinors of opposite chirality. Then the $\alpha$-plane associated to $\xi^{A'}$ intersects the $\beta$-plane associated to $\beta^A$ in a totally null $(m-1)$-plane if and only if
\begin{align} \label{eq-pure_pair1}
 \alpha \ind{^a^A} \beta \ind*{_a^{B'}} & = - 2 \, \alpha^{B'} \beta^A \, ,
\end{align}
where $ \alpha \ind{^a^A} := \gamma \ind{^a_{B'}^A} \alpha^{B'} $ and $\beta \ind{^a^{A'}} := \gamma \ind{^a_B^{A'}} \beta^B$.

Let $\beta^B$ and $\rho^A$ be any two negative pure spinors not proportional to each other. Then the $\beta$-planes associated to $\beta^B$ and $\rho^A$ intersect in a totally null $(m-2)$-plane if and only if
\begin{align}\label{eq-intersect_pure_pair1}
 \beta \ind{^a^{(A'}} \rho \ind*{_a^{B')}} & = 0 \, ,
\end{align}
where $\beta \ind{^a^{A'}} := \gamma \ind{^a_B^{A'}} \beta^B$ and $\rho \ind{^a^{A'}} := \gamma \ind{^a_B^{A'}} \rho^B$.

The same result holds for any two positive pure spinors not proportional to each other.
\end{prop}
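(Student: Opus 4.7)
The plan is to deduce both statements from Proposition \ref{prop-pure-spinors-Cartan} by converting its family of form-valued vanishing conditions into one bispinorial identity, using Schur's lemma together with the trace identities of Lemma \ref{lem-technical}. For the first statement, I view $\alpha\ind{^a^A}\beta\ind{_a^{B'}} = \gamma\ind{^a_{C'}^A}\gamma\ind{_a_D^{B'}}\alpha^{C'}\beta^D$ as the image of the bispinor $\alpha^{C'}\beta^D \in \mfS^+ \otimes \mfS^-$ under a $\Spin(2m,\C)$-equivariant contraction. Decomposing $\mfS^+ \otimes \mfS^-$ into its Cartan irreducibles, labelled by form-degrees $p$ of parity appropriate to the chirality pair (these are exactly the summands picked out by Proposition \ref{prop-pure-spinors-Cartan}), Schur's lemma tells me that the contraction acts by a scalar on each summand. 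After passing through the Spin-invariant bilinear forms to reduce the operator to a triple-gamma product, the second case of Lemma \ref{lem-technical} supplies this scalar. The combined coefficient from $\gamma^a \otimes \gamma_a$ and the identity piece $2\alpha^{B'}\beta^A$ then vanishes precisely on the $p = m-1$ summand, so that the equation $\alpha\ind{^a^A}\beta\ind{_a^{B'}} = -2\alpha^{B'}\beta^A$ is equivalent to the vanishing of every other Cartan component of $\alpha^{B'}\beta^A$, which is exactly the condition of Proposition \ref{prop-pure-spinors-Cartan} for $k = m-1$.

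The second statement is treated in the same spirit, now applied to $\beta^A \rho^B \in \mfS^- \otimes \mfS^-$. The symmetrized quantity $\beta\ind{^a^{(A'}}\rho\ind{_a^{B')}}$ lies in $\odot^2 \mfS^+$ and is the image of the symmetric part of $\beta^A \rho^B$ under the analogous double-Clifford contraction, whose Schur scalar on each Cartan summand of $\odot^2 \mfS^+$ is supplied this time by the first case of Lemma \ref{lem-technical}. The same argument produces a scalar that vanishes precisely on the $p = m-2$ summand, yielding the equivalence with Proposition \ref{prop-pure-spinors-Cartan} for $k = m-2$; the symmetrization is what singles out $\odot^2 \mfS^+$ rather than $\wedge^2 \mfS^+$, and thus the correct class of Cartan components to be annihilated.

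The principal obstacle is the careful parity and sign bookkeeping, especially at the middle form-degree: one has to keep track of the Hodge identification $\wedge^p \mfV \cong \wedge^{2m-p}\mfV$ and of the splitting $\wedge^m \mfV = \wedge^m_+ \mfV \oplus \wedge^m_- \mfV$ in order to determine which form-degrees genuinely appear as distinct Spin-irreducibles in each of $\mfS^+ \otimes \mfS^-$, $\odot^2 \mfS^+$ and $\wedge^2 \mfS^+$, and to verify that the unique zero of the relevant Schur coefficient lands on the form-degree $m-1$ or $m-2$ prescribed by Cartan, consistently across both parities of $m$. Once these conventions are fixed, Spin-equivariance and Schur's lemma deliver both equivalences with minimal further computation.
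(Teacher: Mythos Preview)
Your proposal is correct and matches the paper's approach: the paper's one-line proof simply reads ``An application of Lemma \ref{lem-technical} when $k=m-1$ and $k=m-2$ yields the equivalent characterisations,'' and your Schur-lemma framing is precisely the natural way to organise that application --- decompose $\mfS^+\otimes\mfS^-$ (resp.\ $\odot^2\mfS^-$) into form-degree pieces, observe that both sides of \eqref{eq-pure_pair1} (resp.\ \eqref{eq-intersect_pure_pair1}) are $\Spin(2m,\C)$-equivariant and hence act by scalars on each piece, and read off those scalars from the traced identities in Lemma \ref{lem-technical}. Your identification of the sign and parity bookkeeping (including the middle-degree splitting and the $m$-dependent behaviour of the spin-invariant bilinear forms used to pass between Lemma \ref{lem-technical}'s index conventions and the operator $\gamma^a\otimes\gamma_a$) as the only real obstacle is exactly right; the paper suppresses all of this.
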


Finally, as a direct consequence of the previous propositions, we obtain
\begin{cor}\label{prop-pure-spinors3}
Let $\xi^{A'}$ be a pure spinor, and let $\mfV^{\frac{1}{2}}$ and $\mfS^{\frac{m-2}{4}}$ be defined as in \eqref{eq-basic-defn}. Then
\begin{itemize}
\item Any non-zero spinor in $\mfS^{\frac{m-2}{4}}$ is a pure spinor. 
\item The $\beta$-plane associated to any non-zero spinor in $\mfS^{\frac{m-2}{4}}$ intersects the $\alpha$-plane $\mfV^{\frac{1}{2}}$ in a totally null $(m-1)$-plane.
\item The $\beta$-planes associated to any two non-proportional non-zero spinors in $\mfS^{\frac{m-2}{4}}$ intersect in a totally null $(m-2)$-plane.
\end{itemize}
\end{cor}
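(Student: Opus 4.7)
My plan is to reduce everything to a direct calculation in the concrete model of Remark \ref{rem-alternative}. Setting $\mfN := \mfV^{\frac{1}{2}}$ and choosing a complementary totally null $\mfN^*$, I identify $\mfS \cong \wedge^\bullet \mfN$. Because $\xi^{A'}$ is annihilated by all of $\mfN = \mfV^{\frac{1}{2}}$ under the Clifford action, it must, up to scale, coincide with the generator of the one-dimensional space $\wedge^m \mfN \subset \mfS^+$ (the only subspace of $\mfS^+$ annihilated by $\mfN$). The Clifford action $(v+w) \cdot \xi = v \wedge \xi - w \lrcorner \xi$ of $v+w \in \mfN \oplus \mfN^*$ on this top form kills vectors in $\mfN$ and sends $\mfN^*$ bijectively onto $\wedge^{m-1}\mfN$, so $\mfS^{\frac{m-2}{4}} = \im \xi \ind{_a^A}$ is identified with $\wedge^{m-1}\mfN \subset \mfS^-$.

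The core observation is then that every $(m-1)$-form on the $m$-dimensional space $\mfN$ is decomposable. For a non-zero $\eta \in \wedge^{m-1}\mfN$, I would write $\eta = f_1 \wedge \cdots \wedge f_{m-1}$, complete $\{f_1,\ldots,f_{m-1}\}$ to a basis $\{f_1,\ldots,f_m\}$ of $\mfN$ with dual basis $\{f_1^*,\ldots,f_m^*\}$ of $\mfN^*$, and compute the Clifford annihilator of $\eta$ by splitting $(v+w) \cdot \eta = v \wedge \eta - w \lrcorner \eta$ across the independent summands $\wedge^m\mfN$ and $\wedge^{m-2}\mfN$. A routine check shows the annihilator is exactly the $m$-plane $\sspan(f_1,\ldots,f_{m-1},f_m^*)$, and it is totally null by the nullity of $\mfN$ and $\mfN^*$ together with the dual-basis relation $\langle f_i, f_m^* \rangle = 0$ for $i < m$. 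This simultaneously proves that $\eta$ is pure (first bullet) and identifies its intersection with $\mfV^{\frac{1}{2}} = \mfN$ as the $(m-1)$-plane $\sspan(f_1,\ldots,f_{m-1})$ (second bullet).

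For the third bullet, two non-proportional $\eta_1, \eta_2 \in \wedge^{m-1}\mfN$ correspond to distinct $(m-1)$-dimensional subspaces $U_1, U_2 \subset \mfN$, and a dimension count (since $U_1 + U_2 = \mfN$) forces $\dim(U_1 \cap U_2) = m-2$. Picking a basis $\{g_1,\ldots,g_{m-2}\}$ of $U_1 \cap U_2$ and completing by $g_{m-1} \in U_1 \setminus U_2$ and $g_m \in U_2 \setminus U_1$ to a basis of $\mfN$, the same computation realises the two $\beta$-planes as $\sspan(g_1,\ldots,g_{m-1},g_m^*)$ and $\sspan(g_1,\ldots,g_{m-2},g_m,g_{m-1}^*)$, whose intersection is precisely the $(m-2)$-plane $\sspan(g_1,\ldots,g_{m-2})$, as required.

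I do not anticipate a substantive obstacle: once the identification $\mfS^{\frac{m-2}{4}} \cong \wedge^{m-1}\mfN$ is in place, all three statements are direct consequences of the decomposability of top-minus-one forms. A more laborious index-calculus alternative would be to set $\eta^A := V^a \xi \ind{_a^A}$ and verify the purity condition \eqref{eq-purity_cond} for $\eta^A$ and the intersection identity \eqref{eq-pure_pair1} between $\xi^{A'}$ and $\eta^A$ by expanding the relevant quadratic Clifford expressions via Lemma \ref{lem-technical} and invoking the purity identity $\xi \ind{^a^A} \xi \ind{_a^B} = 0$ to collapse the remaining terms, but the model-based argument is considerably cleaner.
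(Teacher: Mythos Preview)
Your proof is correct and takes a genuinely different route from the paper's. The paper works entirely in the abstract index calculus: writing $\beta^A = b^a \xi \ind{_a^A}$, it verifies the purity condition \eqref{eq-purity_cond} for $\beta^A$, the intersection criterion \eqref{eq-pure_pair1} between $\xi^{A'}$ and $\beta^A$, and (by polarisation, since the sum of two spinors in $\mfS^{\frac{m-2}{4}}$ is again pure) the criterion \eqref{eq-intersect_pure_pair1} between two such $\beta$'s, then invokes Proposition~\ref{prop-pure-spinors}. You instead pass to the explicit model $\mfS \cong \wedge^\bullet \mfN$ of Remark~\ref{rem-alternative}, identify $\mfS^{\frac{m-2}{4}}$ with $\wedge^{m-1}\mfN$, and exploit the decomposability of $(m-1)$-forms to read off annihilators and their intersections directly. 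Your argument is more elementary in that it bypasses Proposition~\ref{prop-pure-spinors} (and hence Lemma~\ref{lem-technical}) altogether and makes the geometry transparent; the paper's argument has the advantage of staying within the spinor calculus being developed and illustrating how the algebraic criteria \eqref{eq-pure_pair1}, \eqref{eq-intersect_pure_pair1} are used in practice. Amusingly, the ``more laborious index-calculus alternative'' you sketch at the end is essentially what the paper does.
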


\begin{proof}
Let $\beta^A$ and $\rho^A$ be a two non-zero spinors in $\mfS^{\frac{m-2}{4}}$ so that $\beta^A = b^a \xi \ind*{_a^A}$ and $\rho^A = \rho^a \xi \ind*{_a^A}$ for some $b^a$ and $\rho^a$ not $\mfV^{\frac{1}{2}}$. In particular, we can assume $b^a$, $\rho^a$ to lie in a complementary subspace of $\mfV^{\frac{1}{2}}$ in $\mfV$, so that they are null, and thus annihilate $\beta^A$ and $\rho^A$ respectively. Assume that $\beta^{[A} \rho^{B]} \neq 0$.  We simply check:
\begin{itemize}
\item $\beta \ind{^a^{A'}} \beta \ind*{_a^{B'}} = b^a b^b \gamma \ind{_a_A^{A'}} \gamma \ind{_b_B^{B'}} (\xi \ind*{^c^A} \xi \ind*{_c^B} ) + 4 b^a \beta \ind*{_a^{(A'}} \xi ^{B')} + 4 b_a b^a \xi^{A'} \xi^{B'} = 0$;

\item $\xi \ind*{^a^A} \beta \ind*{_a^{B'}} = \xi \ind*{^a^A} \left( - b^c \xi \ind*{_a^C} \gamma \ind{_c_C^{B'}} - 2 b_a \xi^{B'} \right) = - 2 \, \beta^A \xi^{B'}$;

\item Finally, since $\mfS^{\frac{m-2}{4}}$ is a vector space of pure spinors, the sum of $\beta^A$ and $\rho^B$ is also a pure spinor, and the result follows by polarisation, i.e.\ $0 = \left( \beta \ind{^a^{A'}} + \rho \ind{^a^{A'}} \right) \left( \beta \ind*{_a^{B'}} + \rho \ind*{_a^{B'}} \right) = 2 \, \beta \ind{^a^{(A'}} \rho \ind*{_a^{B')}}$ i.e.\ the algebraic condition \eqref{eq-intersect_pure_pair1} is satisfied.
\end{itemize}
The result follows by Proposition \ref{prop-pure-spinors}.
\end{proof}

\begin{rem}
The last part of Corollary \ref{prop-pure-spinors3} is an articulation of a standard theorem  \cites{Chevalley1954,Budinich1989} which states that a sufficient and necessary condition for the sum of two pure spinors to be pure is that their respective totally null $m$-planes intersect in a totally null $(m-2)$-plane. 
\end{rem}

\paragraph{Splitting}
It is often more convenient to eliminate the quotient vector spaces in the isomorphisms \eqref{eq-iso} and \eqref{eq-dual_iso} in favour of splittings adapted to them. We split the filtration \eqref{eq-null-structure} as
\begin{align}\label{eq-V-splitting}
 \mfV & = \mfV_{-\frac{1}{2}} \oplus \mfV_{\frac{1}{2}} \, ,
\end{align}
where $\mfV_{-\frac{1}{2}} \subset \mfV^{-\frac{1}{2}}$ is complementary to $\mfV_{\frac{1}{2}} := \mfV^{\frac{1}{2}}$ and is linearly isomorphic to $\mfV^{-\frac{1}{2}} / \mfV^{\frac{1}{2}}$. We note that $\mfV_{-\frac{1}{2}}$ is a totally null $m$-plane dual to $\mfV_{\frac{1}{2}} := \mfV^{\frac{1}{2}}$ by virtue of $\mfV \cong \mfV^*$. In particular, there exists a pure spinor $\eta \ind*{_{A'}}$ dual to $\xi^{A'}$ such that $\mfV_{-\frac{1}{2}}$ annihilates $\eta_{A'}$, ie
\begin{align}\label{eq-ker-eta}
 \mfV_{-\frac{1}{2}} & = \ker \eta \ind*{^a_A} : \mfV \rightarrow (\mfS^-)^* \, ,
\end{align}
Conversely, any choice of spinor dual to $\xi^{A'}$ induces a splitting \eqref{eq-V-splitting} of $\mfV$.

With no loss, we normalise $\xi^{A'}$ and $\eta_{A'}$ as $\xi \ind*{^{A'}} \eta \ind*{_{A'}} = - \frac{1}{2}$. We set
\begin{align*}
 \mfS_{-\frac{m-2}{4}} & := \im \eta \ind*{_a_A} : \mfV \rightarrow (\mfS^-)^* \, ,
\end{align*}
so that by Lemma \ref{lem-vector_decomposition}, any vector $V^a$ in $\mfV_{-\frac{1}{2}}$ takes the form $V^a = \eta \ind*{^a_A} v^A$ for some spinor $v^A$ in $\mfS_{\frac{m-2}{4}} := \mfS^{\frac{m-2}{4}}$, dual to $\mfS_{-\frac{m-2}{4}}$.

Finally, to make the pairing between $\mfS_{-\frac{m-2}{4}}$ and $\mfS_{\frac{m-2}{4}}$ more explicit, we introduce the map
\begin{align}\label{eq-id_m}
 I \ind*{_B^A} & := \eta \ind*{_a_B} \xi \ind*{^a^A} :\mfS^- \rightarrow \mfS^- \, .
\end{align}
By the Clifford property \eqref{eq-red_Clifford_property}, $I \ind*{_B^A}$ is idempotent with trace $I \ind*{_A^A}=m$. Thus, $I \ind*{_B^A}$ must be the identity on $\mfS_{\frac{m-2}{4}}$, or dually, on $\mfS_{-\frac{m-2}{4}}$.

\subsection{The stabiliser of a projective pure spinor in $\so(2m,\C)$ for $m>2$}
We now turn to the decomposition of the Lie algebra $\g := \so(2m,\C)$, which we shall identify with the space $\wedge^2 \mfV^*$ of $2$-forms by means of $g_{ab}$. We remind the reader that we assume $m>2$.

\paragraph{Filtration}
The filtration \eqref{eq-null-structure} on $\mfV$ induces a filtration of vector subspaces
\begin{align}\label{eq-filtration_g}
 \{ 0 \} =: \g^2 \subset \g^1 \subset \g^0 \subset \g^{-1} := \g 
\end{align}
of $\g$, where
\begin{align}\label{eq-def_g_filt}
 \g^0 & := \left\{ \phi \ind{_{ab}} \in \g : \xi \ind*{^a ^A} \xi \ind*{^b ^B} \phi_{ab} = 0 \right\} \, , &
 \g^1 & := \left\{ \phi \ind{_{ab}} \in \g : \xi \ind*{^b ^{A}} \phi \ind{_{ba}} = 0 \right\} \, .
\end{align}
In fact, as can easily be checked from the definitions \eqref{eq-def_g_filt}, $\g$ is a \emph{filtered Lie algebra}, in the sense that the Lie bracket $[ \cdot , \cdot] : \g \times \g \rightarrow \g$ is compatible with the filtration on $\g$, i.e.\ $[ \g^i , \g^j ] \subset \g^{i+j}$, with the convention $\g^i = \{ 0 \}$ for $i \geq 2$, and $\g^i = \g$ for all $i \leq -1$.

\begin{prop}
 The Lie subalgebra $\prb := \g^0$ is the stabiliser of $\xi^{A'}$, ie
\begin{align*}
\phi \ind{_{ab}} \gamma \ind{^{ab}_{B'}^{A'}} \xi \ind*{^{B'}} & \propto \xi \ind*{^{A'}} \, .
\end{align*}
\end{prop}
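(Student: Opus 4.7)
The plan is to prove that $\g^0 = \Stab_\g \langle \xi^{A'} \rangle$ by first reinterpreting $\g^0$ as the stabiliser of the associated $\alpha$-plane $\mfV^{\frac{1}{2}} = \ker \xi \ind{_a^A}$, and then transporting this back to the spinor side via the $\Spin(2m,\C)$-equivariant Cartan bijection between projective pure spinors and $\alpha$-planes.

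First I would verify that $\phi_{ab} \in \g^0$ if and only if the standard action of $\phi$ on $\mfV$ preserves $\mfV^{\frac{1}{2}}$. By Lemma \ref{lem-vector_decomposition}, every $V^b \in \mfV^{\frac{1}{2}}$ has the form $V^b = \xi \ind{^b^C} v_C$; preservation means $\xi \ind{_a^A} \phi \ind{^a_b} \xi \ind{^b^C} v_C = 0$ for all $v_C$, which collapses, after using $\xi \ind{_a^A} = g_{ab} \xi \ind{^b^A}$, to $\xi \ind{^a^A} \xi \ind{^b^C} \phi_{ab} = 0$ -- precisely the defining condition of $\g^0$.

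Next, the assignment $\langle \xi^{A'} \rangle \mapsto \mfV^{\frac{1}{2}}$ is the familiar Cartan bijection between projective pure spinors and one family of maximal totally null subspaces of $\mfV$, and it is manifestly $\Spin(2m,\C)$-equivariant, since $\mfV^{\frac{1}{2}}$ is built purely from the Clifford action of $\xi^{A'}$. Because this assignment is \emph{injective}, $\phi$ preserves $\mfV^{\frac{1}{2}}$ if and only if it preserves $\langle \xi^{A'} \rangle$, i.e.\ $\phi \cdot \xi^{A'} = \lambda \xi^{A'}$ for some $\lambda \in \C$. The spin representation then realises this action as $\phi \cdot \xi^{A'} = \tfrac{1}{2} \phi_{ab} \gamma \ind{^{ab}_{B'}^{A'}} \xi^{B'}$, so the stabiliser condition reads exactly as stated. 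The only computational care needed is the index bookkeeping in the first step; the Cartan correspondence and the spin representation formula are standard, so I do not anticipate any substantive obstacle.
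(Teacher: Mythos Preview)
Your proof is correct and follows a genuinely different line from the paper's. The paper argues computationally: a Clifford identity expresses $\xi^{aA}\xi^{bB}\phi_{ab}$ in terms of the spin action $\phi_{cd}\,\gamma^{cd}{}_{B'}{}^{A'}\xi^{B'}$ contracted with further copies of $\xi$, giving one inclusion immediately; the reverse inclusion is then read off after rewriting $\phi_{ab}\gamma^{ab}{}_{B'}{}^{A'}\xi^{B'}=\lambda\,\xi^{A'}$ and invoking the purity condition \eqref{eq-purity_cond}. You instead first identify $\g^0$ with the stabiliser of the $\alpha$-plane $\mfV^{\frac{1}{2}}$ via Lemma~\ref{lem-vector_decomposition}, and then use the equivariant Cartan bijection between projective pure spinors and $\alpha$-planes to transport this to the stabiliser of $\langle\xi^{A'}\rangle$.

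Your route is conceptually cleaner: it makes transparent that the statement is really about two equivalent descriptions of the same geometric object, and no $\gamma$-matrix manipulation beyond the definitions is required. The paper's computation, by contrast, stays entirely within the spinor calculus being built and shows explicitly where purity enters. One small point worth flagging: the injectivity of $\langle\xi^{A'}\rangle\mapsto\mfV^{\frac{1}{2}}$ that you invoke is not isolated as a separate statement prior to this proposition in the paper, though it follows at once from Remark~\ref{rem-alternative} (a spinor annihilated by all of $\mfN$ must lie in the one-dimensional space $\wedge^m\mfN$).
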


\begin{proof}
 From the identity $\xi \ind*{^a^A} \xi \ind*{^b^B} \phi \ind{_{ab}} = - \frac{1}{4} \phi \ind{_{ab}} \xi \ind*{^{D'}} \gamma \ind{^{ab}_{D'}^{C'}} \gamma \ind{^c _{C'} ^A} \xi \ind*{_c ^B}$, it follows that the stabiliser of $\xi^{A'}$ is contained in $\g^0$, and, by rewriting $\phi \ind{_{ab}} \gamma \ind{^{ab}_{B'}^{A'}} \xi^{B'} = \phi \, \xi^{A'}$ for some $\phi$, and using \eqref{eq-purity_cond}, in fact contains $\g^0$.
\end{proof}

The Lie subalgebra $\prb$ is a \emph{Lie parabolic subalgebra} of $\so(2m,\C)$. From the Lie bracket commutation relation of $\g^i$, each vector subspace $\g^i$ is a $\prb$-module.

\paragraph{Splitting}
The splitting \eqref{eq-V-splitting} of $\mfV$ adapted to the null structure associated to $\xi^{A'}$ endows $\g$ with the structure of a $|1|$-graded Lie algebra, ie
\begin{align}\label{eq-split-g}
 \g & = \g_{-1} \oplus \g_0 \oplus \g_1 \, , & [ \g_i , \g_j ] \subset \g_{i+j} \, .
\end{align}
where $\g_i \subset \g^i$ are complementary to $\g^{i+1}$, for each $i=-1,0,1$, and we set $\g_i := \{ 0 \}$ when $|i|>2$ for convenience. Explicitly, we have
\begin{align*}
 \g_{-1} & \cong \wedge^2 \mfV_{-\frac{1}{2}} \, , & \g_0 & \cong \mfV_{-\frac{1}{2}} \otimes \mfV_{\frac{1}{2}} \, , & \g_1 & \cong \wedge^2 \mfV_{\frac{1}{2}} \, .
\end{align*}
In particular, $\g_1$ and $\g_{-1}$ are dual to one another, and $\g_0$ is isomorphic to $\glie(m,\C)$, the Lie algebra of the complex general linear group $\GL(m,\C)$ with standard representation $\mfV_{\frac{1}{2}}$. If $\eta_{A'}$ is a pure spinor with $\eta_{A'} \xi^{A'} = -\frac{1}{2}$ so that \eqref{eq-ker-eta} holds, then we can write
\begin{align*}
 \phi \ind{_{ab}} & = \eta \ind*{_{a}_{A}} \eta \ind*{_{b}_{B}} \phi \ind{^{AB}} + 2 \, \xi \ind*{_{\lb{a}}^{A}} \eta \ind*{_{\rb{b}}_{B}} \phi \ind{_A^B} + \xi \ind*{_{a}^{A}} \xi \ind*{_{b}^{B}} \phi \ind{_{AB}} \quad  \in \quad \g_{-1} \oplus \g_0 \oplus \g_1 \, ,
\end{align*}
where $\phi \ind{^{AB}} = \phi \ind{^{[AB]}} \in \wedge^2 \mfS_{\frac{m-2}{4}}$, $\phi \ind{_A^B} \in \mfS_{-\frac{m-2}{4}}\otimes \mfS_{\frac{m-2}{4}}$, $\phi \ind{_{AB}} = \phi \ind{_{[AB]}} \in \wedge^2 \mfS_{-\frac{m-2}{4}}$. Here, we emphasise that spinor indices should not be raised nor lowered, i.e.\ $\phi \ind{^{AB}}$, $\phi \ind{_{AB}}$ and $\phi \ind{_A^B}$ are independent of each other.

By the commutation relation, $\g_1$ is nilpotent. Further, since $\g_0$ is reductive, there is a direct sum decomposition $\g_0 = \mfz_0 \oplus \slie_0$ where $\mfz_0$ is the one-dimensional center of $\g_0$, and $\slie_0$ is the simple part of $\g_0$, which is isomorphic to $\slie(m,\C)$, the Lie algebra of the complex special linear group $\SL(m,\C)$. The center $\mfz_0$ can be seen to be spanned by the  element  
\begin{align}\label{eq-grading-element}
 E \ind{_{ab}} & := - \xi \ind*{_{\lb{a}}^A} \eta \ind*{_{\rb{b}}_A} = - \xi \ind*{_a^A} \eta \ind*{_b_A} + \frac{1}{2} g \ind{_{ab}} \, ,
\end{align}
with respect to which any $\phi_{ab} \in \slie_0$ is tracefree, i.e.\ $E \ind{^{ab}} \phi \ind{_{ab}} = 0$. 
More generally, any $\phi_{ab} \in \g_0$ admits the decomposition
\begin{align*}
 \phi \ind{_{ab}} & = \phi \, \omega \ind{_{ab}}  + 2 \, \xi \ind*{_{\lb{a}}^{A}} \eta \ind*{_{\rb{b}}_{B}} \phi \ind{_A^B} \quad  \in \quad \g_0 = \mfz_0 \oplus \slie_0 \, ,
\end{align*}
where $\phi \in \C$ and $\phi \ind{_A^B} \in \mfS_{-\frac{m-2}{4}}\otimes \mfS_{\frac{m-2}{4}}$ is tracefree in the sense that $\phi \ind{_A^B} I \ind*{_B^A} = 0$ where we recall $I \ind*{_A^B} = \xi \ind*{^a^A} \eta \ind*{_a_B}$ is the identity on $\mfS_{\frac{m-2}{4}}$ (see \eqref{eq-id_m}). Here, we have defined, for convenience, $\omega \ind{_{ab}} := - 2 \, E \ind{_{ab}}$ so that $\omega \ind{_a^c} \omega \ind{_c^b} = g \ind*{_a^b}$.

The element $E_{ab}$ has the property $\xi \ind*{^b^A} E \ind{_b^a} = \frac{1}{2} \xi \ind*{^a^A}$ and $\eta \ind*{^b_A} E \ind{_b^a} = - \frac{1}{2} \eta \ind*{^a_A}$, i.e.\ $E_{ab}$ has eigenvalues $\pm \frac{1}{2}$ on $\mfV_{\pm\frac{1}{2}}$. The action of $E_{ab}$ extends by derivation to any tensor product of $\mfV$ and $\mfV^*$, and in particular $E_{ab}$ has eigenvalues $i$ on $\g_i$ for $i=-1,0,1$. Now, the image of $E \ind{_{ab}}$ in the Clifford algebra $\Cl(\mfV,g)$ restricted to $\End(\mfS^+)$ is $E \ind{_{B'}^{A'}} := - \frac{1}{4} E \ind{_{ab}} \gamma \ind{^{ab}_{B'}^{A'}}$, and has eigenvalues $\frac{m}{4}$ on $\mfS_{\frac{m}{4}}$, and similarly for the action of $E_{ab}$ on $\mfS^-$ and their duals. For these reasons, $E_{ab}$ is referred to as \emph{the grading element of $\g$}.

\paragraph{Parabolic Lie subgroups}
Moving to the level of Lie groups, we denote by $P$ the stabiliser of the projective pure spinor $[\xi]$ in $G=\Spin(2m,\C)$ so that $P$ has Lie algebras $\prb$. More precisely, $P$ admits a Levy decomposition $P = G_0 \ltimes P_+$, where the image of $G_0$ in $\SO(2m,\C)$ under the covering map is the complex general Lie group $\GL(m,\C)$ while $P_+$ has nilpotent Lie algebra $\g_1$. Any of the $\prb$-invariant structures, including filtrations and associated graded vector spaces, considered in this article are also $P$-invariant and can be regarded as finite representations of $P$. Similarly, we can view $\g_0$-modules as $G_0$-modules.

\paragraph{Associated graded vector space}
Associated to the filtration \eqref{eq-filtration_g} is the graded $\prb$-module $\gr(\g)=\bigoplus_{i=-1}^1 \gr_i (\g)$ where $\gr_i (\g) := \g^i/\g^{i+1}$. In fact, each $\gr_i(\g)$ is a $\prb$-module since $[\g^1,\g^i] \subset \g^{i+1}$. Each $\gr_i(\g)$ is lineraly isomorphic to the $\g_0$-module $\g_i$ of the splitting \eqref{eq-split-g}. There is a further direct sum decomposition
\begin{align*}
 \gr_0 (\g) & \cong \g_0^0 \oplus \g_0^1  \, , & & \mbox{where} & \g_0^0 & := \left( \g_1 + \mfz_0 \right) / \g_1 \, , & \g_0^1 & := \left( \g_1 + \slie_0 \right) / \g_1 \, ,
\end{align*}
of $\prb$-modules, where $\g_{0}^0 \cong \mfz_0$ and $\g_{0}^1 \cong \slie_0$ as vector spaces. Let us set $\g_{\pm1}^0 := \gr_{\pm1} (\g)$ for convenience. Then we can represent $\gr(\g)$ in the form of a directed graph
\begin{align}\label{eq-Penrose-diagram-g}
\begin{aligned}
 \xymatrix@R=1em{ & \g_0^0 \ar[dr] & \\
\g_1^0 \ar[ur] \ar[dr] &  & \g_{-1}^0 \\
& \g_0^1 \ar[ur] & }
\end{aligned}
\end{align}
where the arrows are defined by the property
\begin{align}\label{eq-def-arrow1}
 \g_i^j & \longrightarrow \g_{i-1}^k & \Longleftrightarrow & &  \breve{\g}_i^j \subset \g_1 \cdot \breve{\g}_{i-1}^k \, ,
\end{align}
for any irreducible $\g_0$-module $\breve{\g}_i^j$ linearly isomorphic to $\g_i^j$. Here the $\cdot$ denotes the algebraic action of $\g$ on any $\g$-module.

Such a description can be made explicit by defining the maps, for any $\phi \ind{_{ab}} \in \g$,
\begin{align}\label{eq-map_grading_g}
 {}^\g _\xi \Pi_{-1}^0 (\phi) & := \xi \ind*{^a^A} \xi \ind*{^b^B} \phi_{ab} \, , &
 {}^\g _\xi \Pi_0^0 (\phi) & := \xi \ind*{^{ab} ^{A'}} \phi_{ab} \, , &
 {}^\g _\xi \Pi_0^1 (\phi) & := \xi \ind*{^c ^{A}} \phi \ind{_{cb}} + \frac{1}{n} \gamma \ind{_b _{C'} ^A} \xi \ind*{^{cd} ^{C'}} \phi \ind{_{cd}} \, .
\end{align}
where $\xi \ind*{_{ab}^{A'}} := \xi \ind*{^{B'}} \gamma \ind{_{ab}_{B'}^{A'}} : \wedge^2 \mfV \rightarrow \mfS^+$ and $\mfS^{\frac{m-4}{4}} := \im \xi \ind*{_{ab}^{A'}} : \wedge^2 \mfV \rightarrow \mfS^+$. Then the kernels of the maps ${}^\g _\xi \Pi_i^j$ correspond to the $\prb$-submodules of $\g$, ie
\begin{align*}
\g^1 + \mfz_0 & =  \{ \phi_{ab} \in \g : {}^\g _\xi \Pi_0^1 (\phi) = 0 \} \, , & \g^1 + \slie_0 & =  \{ \phi_{ab} \in \g : {}^\g _\xi \Pi_0^0 (\phi) = 0 \} \, , \\
\g^0 & =  \{ \phi_{ab} \in \g : {}^\g _\xi \Pi_{-1}^0 (\phi) = 0 \} \, , & \g^1 & =  \{ \phi_{ab} \in \g : {}^\g _\xi \Pi_0^0 (\phi) = {}^\g _\xi \Pi_0^1 (\phi) = 0 \} \, .
\end{align*}
The inclusions $\g^1 \subset \g^1 + \mfz_0 \subset \g^0$ and $\g^1 \subset \g^1 + \slie_0 \subset \g^0$ now follow from $\ker {}^\g _\xi \Pi_0^i  \subset \ker {}^\g _\xi \Pi_{-1}^0$.
Passing now to the associated graded module $\gr(\g)$, we can express the irreducible $\prb$-modules $\g_0^j$ in terms of ${}^\g _\xi \Pi_i^j$, eg.
\begin{align*}
\g_0^0 & = \left\{ \phi \ind{_{ab}} \in \g^0 : {}^\g _\xi \Pi_0^1 (\phi) = 0 \right\} / \g^1 \, , &
\g_0^1 & = \left\{ \phi \ind{_{ab}} \in \g^0 : {}^\g _\xi \Pi_0^0 (\phi) = 0 \right\} / \g^1 \, ,
\end{align*}                                      
and so on. The irreducibility of $\g_i^j$ from the fact that the maps ${}^\g _\xi \Pi_i^j$ are saturated with symmetries.

\subsection{Generalisations}\label{sec-gen-fil}
In more generality, for any arbitrary finite $\g$-module $\mfM$, the parabolic subalgebra $\prb$ induces a filtration
\begin{align}\label{eq-M-filtration}
\{ 0 \} =: \mfM^{\ell+1} \subset \mfM^\ell \subset \mfM^{\ell-1} \subset \ldots \subset \mfM^{-k+1} \subset \mfM^{-k} := \mfM \, ,
\end{align}
for some $k$ and $\ell$, of $\prb$-modules, with associated graded $\prb$-module $\gr (\mfM) = \bigoplus \gr_i(\mfM)$, where $\gr_i (\mfM) := \mfM^i / \mfM^{i+1}$,
on which the grading element $E$ acts diagonalisably, with eigenvalues $i$. Each $\gr_i (\mfM)$ splits as a direct sum of irreducible $\prb$-modules $\gr_i (\mfM) = \mfM_i^0 \oplus \mfM_i^1 \oplus \ldots \oplus \mfM_i^\ell$ for some $\ell$ depending on $i$, and each $\mfM_i^j$ is isomorphic to an irreducible $\g_0$-module $\breve{\mfM}_i^j$. It is in fact easier to obtain the irreducible $\g_0$-modules of $\gr(\mfM)$ by  `branching' from $\g$ to $\g_0$. Using ad hoc methods, one can construct suitable bases for the irreducible $\g_0$-modules, and check that they add up to a basis for $\mfM$. In particular, one must have $\dim \mfM = \sum_{i,j} \dim \mfM_i^j = \sum_{i,j} \dim \breve{\mfM}_i^j$.

We can then construct a graph on $\gr (\mfM)$  as follows: we let the nilpotent part $\g_1$ of $\prb$ act on each $\breve{\mfM}_i^j$, and draw an arrow $\mfM_i^j \rightarrow \mfM_{i-1}^k$ for some $i,j,k$, whenever $\breve{\mfM}_i^j \subset \g_1 \cdot \breve{\mfM}_{i-1}^k$.
This graph allows us to identify the $\prb$-submodules of $\mfM$ obtained by letting $\prb$ act on each $\breve{\mfM}_i^j$. Such $\prb$-submodules can be expressed in terms of kernels of maps ${}^\mfM _\xi \Pi_i^j$ analogous to \eqref{eq-map_grading_g}. From the irreducibilty of $\mfM_i^j$, each ${}^\mfM _\xi \Pi_i^j$ must be `saturated' with symmetries in the sense of \cite{Penrose1984}. The main application of this procedure will be found in section \ref{sec-class-curvature}, where we shall take $\mfM$ to be the space of some irreducible (algebraic) curvature tensors.

If $\mfM$  is a tensor, as opposed to spinor, representation, then we can view it as a $\g$-submodule of  $\bigotimes^p \mfV$. The filtration \eqref{eq-null-structure} induces a filtration of $\prb$-modules on $\bigotimes^p \mfV$, and thus on $\mfM$ in the obvious way: each $\mfM^i$ in \eqref{eq-M-filtration} is a $\prb$-submodule of
\begin{align*}
\sum_{i_1 + \ldots + i_p = j} \mfV^{i_1} \otimes \ldots \otimes \mfV^{i_p} \, .
\end{align*}
When $\mfM$ is one of the spinor modules $\mfS^{\pm}$, the description of the filtration of $\prb$-modules can be carried out as follows. Define the maps
\begin{align*}
 \xi \ind*{_{a_1 \ldots a_{2k-1}}^A} & := \xi \ind*{^{B'}} \gamma \ind{_{a_1 \ldots a_{2k-1}}_{B'} ^A} : \wedge^{2k-1} \mfV \rightarrow \mfS^- \, , &
 \xi \ind*{_{a_1 \ldots a_{2k}}^{A'}} & := \xi \ind*{^{B'}} \gamma \ind{_{a_1 \ldots a_{2k}}_{B'}^{A'}} : \wedge^{2k} \mfV \rightarrow \mfS^+ \, ,
\end{align*}
for $k=1, \ldots , m$. Then, using the Clifford property \eqref{eq-red_Clifford_property}, one can see that $\mfS^\pm$ admit $\prb$-invariant filtrations
\begin{align*}
\left.
\begin{aligned}
  \mfS^{\frac{m}{4}} \subset \mfS^{\frac{m-4}{4}} \subset \ldots \subset \mfS^{-\frac{m-4}{4}} \subset \mfS^{-\frac{m}{4}} & = \mfS^+ \, \\
  \mfS^{\frac{m-2}{4}} \subset \mfS^{\frac{m-6}{4}} \subset \ldots \subset \mfS^{-\frac{m-6}{4}} \subset \mfS^{-\frac{m-2}{4}} & = \mfS^- \,
 \end{aligned}
\right\} & & \mbox{when $m$ is even,} \\
\left.
\begin{aligned}
   \mfS^{\frac{m}{4}} \subset \mfS^{\frac{m-4}{4}} \subset \ldots \subset \mfS^{-\frac{m-6}{4}} \subset \mfS^{-\frac{m-2}{4}} & = \mfS^+ \,  \\
  \mfS^{\frac{m-2}{4}} \subset \mfS^{\frac{m-6}{4}} \subset \ldots \subset \mfS^{-\frac{m-4}{4}} \subset \mfS^{-\frac{m}{4}} & = \mfS^- \,
 \end{aligned}
\right\} & & \mbox{when $m$ is odd,}
\end{align*}
where we have defined $\mfS^{\frac{m}{4}} := \langle \xi^{A'} \rangle$ and
\begin{align*}
 \mfS^{\frac{m-4k+2}{4}} & := \im \xi \ind*{_{a_1 \ldots a_{2k-1}}^A} : \wedge^{2k-1} \mfV \rightarrow \mfS^- \, , &
 \mfS^{\frac{m-4k}{4}} & := \im \xi \ind*{_{a_1 \ldots a_{2k}}^{A'}} : \wedge^{2k} \mfV \rightarrow \mfS^+ \, ,
\end{align*}
for $k=1, \ldots ,m$. Using the isomorphisms \eqref{eq-iso-spin}, the above filtrations are also filtrations on the dual spinor spaces $(\mfS^\pm)^*$, where each of the $\prb$-modules can be identified with the kernels
\begin{align*}
 \mfS^{-\frac{m-4k-2}{4}} & = \ker \xi \ind*{_{a_1 \ldots a_{2k-1}} ^B} : \wedge^{2k-1} \mfV^* \leftarrow (\mfS^-)^* \, , &
 \mfS^{-\frac{m-4k-4}{4}} & = \ker \xi \ind*{_{a_1 \ldots a_{2k}} ^{B'}} : \wedge^{2k} \mfV^* \leftarrow (\mfS^+)^* \, ,
\end{align*}
for $k=1, \ldots ,m$, and $\mfS^{-\frac{m-4}{4}} = \ker \xi^{A'} : \C \leftarrow (\mfS^+)^*$.

A choice of splitting \eqref{eq-V-splitting} fixes $\g_0$-modules $\mfS_i \subset \mfS^i$ such that $\mfS^i = \mfS_i \oplus \mfS^{i+1}$ and thus induces gradings
\begin{align*}
\left.
\begin{aligned}
  \mfS_{\frac{m}{4}} \oplus \mfS_{\frac{m-4}{4}} \oplus \ldots \oplus \mfS_{-\frac{m-4}{4}} \oplus \mfS_{-\frac{m}{4}} & = \mfS^+ \, \\
  \mfS_{\frac{m-2}{4}} \oplus \mfS_{\frac{m-6}{4}} \oplus \ldots \oplus \mfS_{-\frac{m-6}{4}} \oplus \mfS_{-\frac{m-2}{4}} & = \mfS^- \,
 \end{aligned}
\right\} & & \mbox{when $m$ is even,} \\
\left.
\begin{aligned}
   \mfS_{\frac{m}{4}} \oplus \mfS_{\frac{m-4}{4}} \oplus \ldots \oplus \mfS_{-\frac{m-6}{4}} \oplus \mfS_{-\frac{m-2}{4}} & = \mfS^+ \,  \\
  \mfS_{\frac{m-2}{4}} \oplus \mfS_{\frac{m-6}{4}} \oplus \ldots \oplus \mfS_{-\frac{m-4}{4}} \oplus \mfS_{-\frac{m}{4}} & = \mfS^- \,
 \end{aligned}
\right\} & & \mbox{when $m$ is odd.}
\end{align*}
The grading element $E$ of $\g$ defined by \eqref{eq-grading-element} has eigenvalues $\frac{2i-m}{4}$ on $\mfS_{\frac{2i-m}{4}}$.

Finally generalising \eqref{eq-dual_iso}, one has isomorphisms
\begin{align*}
 \wedge^k \mfV^{\frac{1}{2}} & \cong \mfS^{\frac{m}{4}} \otimes \left( \mfS^{-\frac{m-2k}{4}}/\mfS^{-\frac{m-2k-4}{4}} \right) \, , & k & = 0 \, , \ldots \, , m-1\, , \\
 \wedge^m \mfV^{\frac{1}{2}} & \cong \mfS^{\frac{m}{4}} \otimes \mfS^{\frac{m}{4}} \, ,
\end{align*}
the latter being the purity condition of Proposition \ref{prop-Cartan_char}. In particular, when $k=2$, we have
\begin{align*}
\g^1 & \cong \mfS^{\frac{m}{4}} \otimes \left( \mfS^{-\frac{m-4}{4}} / \mfS^{-\frac{m-8}{4}} \right) \, , & \left( \g^{-1} / \g^0 \right) \otimes \mfS^{\frac{m}{4}} & \cong \mfS^{\frac{m-4}{4}} / \mfS^{\frac{m}{4}} \, .
\end{align*}
 We note that this description of $\mfS^\pm$ is consistent with the identification of $\mfS$ with $\wedge^\bullet \mfV^{\frac{1}{2}}$.

\subsection{Null Grassmannians}
The space of all null structures in $(\mfV,g)$ splits into two connected components $\Gr_m^+( \mfV,g)$ and $\Gr_m^-( \mfV,g)$, which we identify with the spaces of self-dual null structures ($\alpha$-planes) and anti-self-dual null structures ($\beta$-planes) in $(\mfV,g)$ respectively. These spaces are conventionally referred to as \emph{null} (or \emph{isotropic}) \emph{Grassmannians}. Proposition \ref{prop-fundamental} tells us that $\Gr_m^+( \mfV,g)$ can be identified with the space of all projective pure positive spinors, and must therefore be isomorphic to the homogeneous space $G/P$ where as before $G = \Spin(2m,\C)$ and $P$ is the stabiliser of a projective pure positive spinor. The description of $\Gr_m^- (\mfV,g)$ is similar. In particular, when $m=1,2,3$, the absence of purity conditions means that each of $\Gr_m^\pm (\mfV,g)$ is isomorphic to the complex projective space $\CP^{\frac{1}{2}m(m-1)}$, and when $m>3$, each can be realised as compact complex subvarieties of $\Pp \mfS^\pm$ of dimension $\frac{1}{2}m(m-1)$ being the dimension of $\g_{-1} \cong \g/\prb$.

\subsection{Real pure spinors}\label{sec-real-spin}
One can also consider a $2m$-dimensional real vector space $\mfV$ equipped with a definite or indefinite non-degenerate symmetric bilinear form $g$. In general, the spinor representations of $(\mfV,g)$ are complex vector spaces equipped with a real or quaternionic structure. We then have a notion of pure spinor of \emph{real index} $r$, where $r$ is the real dimension of the intersection of the associated totally null complex $m$-plane $\mfN_\xi$ of the complexification of $(\mfV,g)$ with its complex conjugate. The real index depends on the signature of $g$. For instance, if $g$ is positive definite, $r$ is always zero: $\mfN_\xi$ and its conjugate define a Hermitian signature on $(\mfV,g)$. In Lorentzian signature, $r$ is always $1$, and the analogous structure is known as a \emph{Robinson structure} \cites{Nurowski2002,Trautman2002,Taghavi-Chabertb}. We refer to \cite{Kopczy'nski1992} for details.

More relevant to the present article, however, is the case when $g$ has signature $(m,m)$. Then, $r \equiv m \pmod{2} \leq m$, the spinor representations are spanned by \emph{real} pure spinors (when $r=m$) associated to \emph{real} totally null $m$-planes in $\mfV$. The algebraic setup of the previous sections carries over to this real setting with no major change. The complex Lie algebra $\so(2m,\C)$ is replaced by the real form $\so(m,m)$. The parabolic Lie subalgebra stabilising a real pure spinor is a real form of the complex parabolic $\prb$, and is also described in terms of a $|1|$-grading on $\so(m,m)$. The story is similar at the Lie group level, where $\Spin(2m,\C)$ is replaced by the connected identity component of the real Lie group $\Spin(m,m)$. The next two sections \ref{sec-alg_intrinsic_torsion} and \ref{sec-class-curvature} can also be translated into this real case with no important issue.

\section{Decomposition of the intrinsic torsion}\label{sec-alg_intrinsic_torsion}
As before, we assume $m>2$. Let us consider the $\prb$-module
\begin{align}\label{eq-intrinsic-torsion}
\mfW := \mfV \otimes \left( \g/\prb \right) \, ,
\end{align}
where, as usual, $\g:=\so(2m,\C)$, with standard representation $\mfV$, and $\prb$ the parabolic Lie subalgebra of $\g$ stabilising a projective pure spinor $[  \xi^{A'} ]$.  In section \ref{sec-geometry}, we shall give the module $\mfW$ the geometrical interpretation of the space of intrinsic torsions of a $\mathrm{G}$-structure with structure group $P$.

\begin{nota}\label{nota-prop}
In the table of the following proposition, `$\prb$-module' and `$\g_0$-module' are abbreviated `$\prb$-mod' and `$\g_0$-mod' respecitvely. We also use the notation $\mfM \circledcirc \mfM'$ for the \emph{Cartan product} of two representations $\mfM$ and $\mfM'$ --- see \cite{Eastwood2004}. This is the unique irreducible representation of highest dimension in the tensor product $\mfM \otimes \mfM'$. For $\glie(m,\C)$-modules, this is either the symmetric product $\odot$, the tracefree tensor product $\otimes_\circ$, or a combination of both depending on $\mfM$ and $\mfM'$. Finally, the algebraic action of $\g$ on any $\g$-module will be denoted by a dot $\cdot$.
\end{nota}

\begin{prop}\label{prop-intrinsic_torsion}
The $\prb$-module $\mfW$ admits a filtration
\begin{align}\label{eq-intor-incl}
 \mfW^{-\frac{1}{2}} \subset \mfW^{-\frac{3}{2}} \, ,
\end{align}
of $\prb$-modules on $\mfW$, where $\mfW^{-\frac{3}{2}} := \mfV^{-\frac{1}{2}} \otimes \left( \g^{-1}/\g^0 \right)$ and $\mfW^{-\frac{1}{2}} := \mfV^{\frac{1}{2}} \otimes \left( \g^{-1} /\g^0 \right)$.

The associated graded $\prb$-module
\begin{align}\label{eq-assgradW}
\gr(\mfW) & = \gr_{-\frac{1}{2}} (\mfW) \oplus \gr_{-\frac{3}{2}} (\mfW) = \mfW^{-\frac{1}{2}} \oplus \left( \mfW^{-\frac{3}{2}} / \mfW^{-\frac{1}{2}} \right)
\end{align}
decomposes into a direct sum
\begin{align*}
 \gr_{-\tfrac{1}{2}} (\mfW) & = \mfW_{-\tfrac{1}{2}}^0 \oplus \mfW_{-\tfrac{1}{2}}^1 \, , & \gr_{-\tfrac{3}{2}} (\mfW) & = \mfW_{-\tfrac{3}{2}}^0 \oplus \mfW_{-\tfrac{3}{2}}^1 \, , &
\end{align*}
of irreducible $\prb$-modules as described below:
\vspace{-6.5mm}
\begin{center}
\begin{minipage}[b]{0.45\linewidth}\centering
\begin{displaymath}
{\renewcommand{\arraystretch}{1.8}
\begin{array}{||c|c|c||}
\hline
\text{$\prb$-mod} & \text{$\g_0$-mod} & \text{Dimension} \\
\hline
\mfW_{-\frac{3}{2}}^0 & \wedge^3 \mfV_{-\frac{1}{2}} & {\scriptstyle \frac{1}{3!}m(m-1)(m-2)} \\
\mfW_{-\frac{3}{2}}^1 & \mfV_{-\frac{1}{2}} \circledcirc \left( \wedge^2 \mfV_{-\frac{1}{2}} \right) & \frac{1}{3}m(m^2-1) \\
\hline
\end{array}
}
\end{displaymath}
\end{minipage}
\begin{minipage}[b]{0.45\linewidth}\centering
\begin{displaymath}
{\renewcommand{\arraystretch}{1.8}
\begin{array}{||c|c|c||}
\hline
\text{$\prb$-mod} & \text{$\g_0$-mod} & \text{Dimension} \\
\hline
\mfW_{-\frac{1}{2}}^0 & \mfV_{-\frac{1}{2}}
& m \\
\mfW_{-\frac{1}{2}}^1 & \mfV_{\frac{1}{2}} \circledcirc \left( \wedge^2 \mfV_{-\frac{1}{2}} \right) & {\scriptstyle \frac{1}{2}m(m+1)(m-2)} \\
\hline
\end{array}
}
\end{displaymath}
\end{minipage}
\end{center}
Further,
\begin{align}\label{eq-W-ker}
 \mfW_i^j & = \{ \Gamma _{abc} \xi^{bB} \xi^{cC} \in \mfW^i : {}^\mfW _\xi \Pi_i^k ( \Gamma ) = 0 \, , \, \, \mbox{for all $k \neq j$} \} / \mfW^{i+1} \, , & i & = - \frac{3}{2}, - \frac{1}{2} \, ,
\end{align}
where
\begin{align*}
 {}^\mfW _\xi \Pi_{-\frac{3}{2}}^0 (\Gamma ) & := \Gamma \ind{_{abc}} \xi \ind*{^a^{[A}} \xi \ind*{^b^{B}} \xi \ind*{^c^{C]}} \, , \\
 {}^\mfW _\xi \Pi_{-\frac{3}{2}}^1 (\Gamma ) & := \Gamma \ind{_{abc}} \xi \ind*{^a^{(A}} \xi \ind*{^b^{B)}} \xi \ind*{^c^C} \, , \\
 {}^\mfW _\xi \Pi_{-\frac{1}{2}}^0 (\Gamma ) & := \xi \ind*{^{A'}} \Gamma \ind{_{bcd}} \xi \ind*{^{cd}^{D'}} \gamma \ind{^b_{D'}^B} + \xi \ind*{^b^B} \Gamma \ind{_{bcd}} \xi \ind*{^{cd}^{A'}} \, , \\
{}^\mfW _\xi \Pi_{-\frac{1}{2}}^1 (\Gamma ) & :=
\begin{cases}
\Gamma \ind{_{abc}} \xi \ind*{^b^B} \xi \ind*{^c^C} - \frac{1}{2(m-1)} \left( \xi \ind*{_a^{\lb{B}}} \Gamma \ind{_{bcd}} \xi \ind*{^{cd}^{D'}} \gamma \ind{^b_{D'}^{\rb{C}}} + \xi \ind*{^b^{\lb{B}}} \Gamma \ind{_{bcd}} \xi \ind*{^{cd}^{D'}} \gamma \ind{_a_{D'}^{\rb{C}}}  \right) \, , & m > 3 \, , \\
\Gamma \ind{_{abc}} \xi \ind*{^b^B} \xi \ind*{^c^C} - \frac{1}{4} \left( \xi \ind*{_a^{\lb{B}|}} \Gamma \ind{_{bcd}} \xi \ind*{^{cd}_D} \gamma \ind{^b^{D|\rb{C}}} + \xi \ind*{^b^{\lb{B}|}} \Gamma \ind{_{bcd}} \xi \ind*{^{cd}_D} \gamma \ind{_a^{D|\rb{C}}}  \right)
+ \frac{1}{6} \xi \ind*{^b^A} \Gamma \ind{_{bcd}} \xi \ind*{^{cd}_A } \gamma \ind{_a^{BC}} \, , & m = 3 \, ,
\end{cases} 
\end{align*}
where $\Gamma \ind{_{abc}} \in \mfV \otimes \g$.  For $m=3$, we have made use of the isomorphism $\mfS^+ \cong (\mfS^-)^*$. Notationally, the primed indices are eliminated, and the $\gamma$-matrices take the form $\gamma \ind{^a^{AB}}$ and $\gamma \ind{^a_{AB}}$, and are skew-symmetric in their spinor indices. 

Finally, the $\prb$-module $\gr(\mfW)$ can be expressed by means of the directed graph
\begin{align*}
 \xymatrix@R=1em{
 \mfW_{-\frac{1}{2}}^1 \ar@{.>}[ddr] \ar[r] & \mfW_{-\frac{3}{2}}^1 \\
& \\
\mfW_{-\frac{1}{2}}^0 \ar[uur] \ar[r] & \mfW_{-\frac{3}{2}}^0 }
\end{align*}
where the dotted arrow occurs only when $m>3$. Here, an arrow from  $\mfW_i^j$ to $\mfW_{i-1}^k$ for some $i,j,k$ implies that $\breve{\mfW}_i^j \subset \g_1 \cdot \breve{\mfW}_{i-1}^k$ for any choice of irreducible $\g_0$-modules $\breve{\mfW}_i^j$ and $\breve{\mfW}_{i-1}^k$ isomorphic to $\mfW_i^j$ and $\mfW_{i-1}^k$ respectively, or equivalently that $ \ker {}^\mfW _\xi \Pi_i^j \subset \ker {}^\mfW _\xi \Pi_{i-1}^k$.
\end{prop}

\begin{proof}
Since $\mfW$ is not a $\g$-module, one cannot strictly follow the argument of section \ref{sec-gen-fil}. The idea is nevertheless very similar. We first note that the filtration \eqref{eq-null-structure} on $\mfV$ induces the filtration \eqref{eq-intor-incl} of $\prb$-submodules of $\mfW$. Now consider the associated graded $\prb$-module \eqref{eq-assgradW}.
To make the analysis more tractable, we can work with the grading \eqref{eq-V-splitting} so that we have linear isomorphisms
\begin{align*}
\gr_{-\frac{3}{2}} (\mfW)  & \cong \mfV_{-\frac{1}{2}} \otimes \left( \wedge^2 \mfV_{-\frac{1}{2}} \right) \, , & \gr_{-\frac{1}{2}} (\mfW) & \cong \mfV_{\frac{1}{2}} \otimes \left( \wedge^2 \mfV_{-\frac{1}{2}} \right) \, ,
\end{align*}
between $\prb$-modules and $\g_0$-modules. That each of these $\g_0$-modules splits into irreducibles, i.e.\ $\breve{\mfW}_{-\tfrac{3}{2}}^0 \oplus \breve{\mfW}_{-\tfrac{3}{2}}^1 \cong \wedge^3 \mfV_{-\frac{1}{2}} \oplus \left( \mfV_{-\frac{1}{2}} \circledcirc \left( \wedge^2 \mfV_{-\frac{1}{2}} \right) \right)$ and $\breve{\mfW}_{-\tfrac{1}{2}}^0 \oplus \breve{\mfW}_{-\tfrac{1}{2}}^1 \cong \mfV_{-\frac{1}{2}} \oplus \left( \mfV_{\frac{1}{2}} \circledcirc \left( \wedge^2 \mfV_{-\frac{1}{2}} \right) \right)$ respectively, is clear.

Let us be a bit more explicit by viewing an element of $\mfW^{-\frac{3}{2}}$ as an element of $\mfV^{-\frac{1}{2}} \otimes \left( \wedge^2 \mfS^{\frac{m-2}{4}} \right)$, i.e.\ of the form $\Gamma \ind{_{abc}} \xi \ind*{^b^B} \xi \ind*{^c^C}$ or $\Gamma \ind{_{abc}} \xi \ind*{^{bc}^{B'}} \pmod {\alpha_a \xi^{B'}}$ where $\Gamma \ind{_{abc}} = \Gamma \ind{_{a[bc]}}$ lies in the $\g$-module $\mfV \otimes \g$. This means that $\mfW^{-\frac{1}{2}} = \{ \Gamma \ind{_{abc}} \xi \ind*{^b^B} \xi \ind*{^c^C} \in \mfW^{-\frac{3}{2}} : \Gamma \ind{_{abc}} \xi \ind*{^a^A} \xi \ind*{^b^B} \xi \ind*{^c^C} = 0 \}$. To describe elements of the $\g_0$-modules $\breve{\mfW}_i^j$ we write
\begin{align}
\begin{aligned}\label{eq-connection1form}
 \Gamma \ind{_{abc}} \xi \ind*{^b^B} \xi \ind*{^c^C} & = \eta \ind*{_a_A} \Gamma \ind{^{ABC}} + \xi \ind*{_a^A}  \Gamma \ind{_A^{BC}} \, , \\
  \Gamma \ind{_{abc}} \xi \ind*{^{bc}^{D'}}
 & = \left( \eta \ind*{_a_A} \Gamma \ind{^{ABC}} + \xi \ind*{_a^A}  \Gamma \ind{_A^{BC}} \right) \eta \ind*{_c_C} \gamma \ind{^c_B^{D'}} 
+ 2\,  \left( \xi \ind*{_a^A} \Gamma \ind{_{AC}^C} + \eta \ind*{_a_A}  \Gamma \ind{^A_C^C} \right) \xi \ind*{^{D'}}  \, , \\
  \gamma \ind{^a_{D'}^A} \Gamma \ind{_{abc}} \xi \ind*{^{bc}^{D'}}
 & = \Gamma \ind{^{EBC}} \eta \ind*{_b_B} \eta \ind*{_c_C} \gamma \ind{^{bc}_E^A} + 4 \Gamma \ind{_C^{CA}} 
+ 2 \,  \Gamma \ind{^A_C^C} \, ,
\end{aligned}
\end{align}
where $\Gamma \ind{_A^{BC}} = \Gamma \ind{_A^{[BC]}}$, $\Gamma \ind{^{ABC}} = \Gamma \ind{^{A[BC]}}$, $\Gamma \ind{_A_B^C}$ and $\Gamma \ind{^A_B^C}$ are all elements of tensor products of $\mfS_{\frac{m-2}{4}}$ and $\mfS_{-\frac{m-2}{4}}$ --- see appendix \ref{sec-g0-bases}. As before, $\xi^{A'}$ and $\eta_{A'}$ satisfy $\xi^{A'} \eta_{A'} = - \frac{1}{2}$, and we recall that $I \ind*{_A^B} := \xi \ind*{^a^B} \eta \ind*{_a_A}$ is the identity map on $\mfS_{\frac{m-2}{4}}$. In particular, the irreducible $\g_0$-components of an element of $\mfW$ are determined by
\begin{align}
\begin{aligned}\label{eq-Wirred}
\Gamma \ind{^{[ABC]}} & \in \breve{\mfW}_{-\frac{3}{2}}^0 \, , &
\Gamma \ind{^{(AB)C}} & \in \breve{\mfW}_{-\frac{3}{2}}^1 \, , \\
\Gamma \ind{_A^{AC}} & \in \breve{\mfW}_{-\frac{1}{2}}^0 \, , & 
\Gamma \ind{_A^{BC}} - \frac{2}{m-1} I \ind*{_A^{\lb{B}|}} \Gamma \ind{_D^{D|\rb{C}}} & \in \breve{\mfW}_{-\frac{1}{2}}^1 \, .
\end{aligned}
\end{align}
Using \eqref{eq-connection1form} and \eqref{eq-Wirred}, it is then straightforward to check that the $\prb$-modules defined by the kernels of the maps ${}^\mfW _\xi \Pi_i^k$ are related to $\mfW_i^j$ as shown by \eqref{eq-W-ker}.

To obtain the diagram encoding the full $\prb$-invariance, we must also examine the action of the nilpotent part $\g_1$ of $\prb$ on each of these irreducible $\g_0$-modules. This can be checked by a direct computation or by noting that $ \ker {}^\mfW _\xi \Pi_i^j \subset \ker {}^\mfW _\xi \Pi_{i-1}^k$ for suitable $i$, $j$ and $k$.

Finally, extra care must be taken when $m=3$ where $\wedge^3 \mfV_{\pm\frac{1}{2}}$ are one-dimensional. We can  realise $\g_1$ as the pairing of $\mfV^{-\frac{1}{2}}/\mfV^{\frac{1}{2}}$ and $\wedge^3 \mfV_{\frac{1}{2}}$: any element of $\g_1$ can be written in the form $\phi_{ab} = \frac{1}{2} \varepsilon_{abc} \phi^c$ for some vector $\phi^c \in \mfV_{-\frac{1}{2}}$, where $\varepsilon_{abc} \in \wedge^3 \mfV_{\frac{1}{2}}$. It then follows that $\g_1 \cdot \wedge^3 \mfV_{-\frac{1}{2}} \subset \mfV_{-\frac{1}{2}}$. This also explains why we have distinguished the cases $m=3$ and $m>3$ in the definition of the map ${}^\mfW _\xi \Pi_{-\frac{1}{2}}^1$. One may also use the identity
 \begin{align*}
\xi \ind*{^a^{\lb{A}}} \Gamma \ind{_{abc}} \xi \ind*{^b^B} \xi \ind*{^c^{\rb{C}}} & = - \frac{2}{3} \xi \ind*{^a^E} \Gamma \ind{_{abc}} \xi \ind*{^{bc}_E} \varepsilon \ind{^{ABC}} = - \frac{1}{6} \gamma \ind{^a_{EF}} \Gamma \ind{_{abc}} \xi \ind*{^{bc}^E} \xi \ind*{_b^F} \varepsilon \ind{^{ABC}} \, ,
 \end{align*}
where $\varepsilon \ind{^{ABC}} := \frac{1}{2} \gamma \ind{^a^{AB}} \gamma \ind{_a^{CD}} \xi_D$ is completely skew-symmetric.
\end{proof}

\section{Decomposition of the curvature}\label{sec-class-curvature}
As before, we assume $m>2$. Consider the following $\g$-modules:
\begin{displaymath}
{\renewcommand{\arraystretch}{1.5}
\begin{array}{||c|c|c||}
\hline
\text{$\g$-mod} & \text{Dimension} & \text{Description} \\
\hline
\mfF  & (2m-1)(m+1) & \{ \Phi_{ab} \in \otimes^2 \mfV^* : \Phi_{ab} = \Phi_{(ab)} \, , \Phi \ind{_c^c} = 0 \} \\[.5em]
\hline
\mfA &  \frac{8}{3}m(m+1)(m-1) & \{ A_{abc} \in \otimes^3 \mfV : A_{abc} = A_{a[bc]} \, , A_{[abc]} = 0 \, , A \ind{^a_{ac}} = 0 \} \\[.5em]
\hline
\mfC &  \frac{1}{3}m(m+1)(2m+1)(2m-3) & \{ C_{abcd} \in \otimes^4 \mfV : C_{abcd} = C_{[ab][cd]} \, , C_{[abc]d} = 0 \, , C \ind{^a_{bad}} = 0 \} \\[.5em]
\hline
\end{array}}
\end{displaymath}
These modules are to be interpreted as the spaces of irreducible curvature tensors of the Levi-Civita connection at a point, more precisely, of the tracefree Ricci tensors, Cotton-York tensors and Weyl tensors.

We shall give $\prb$-invariant decompositions of these modules, where $\prb$ is the stabiliser of a projective pure spinor $[\xi^{A'}]$ in $\g$. We state the results without proofs, which essentially follow from the discussion of section \ref{sec-gen-fil}, and arguments similar to the proof of Proposition \ref{prop-intrinsic_torsion}. Details can be worked out using the material contained in appendix \ref{sec-spinor-descript}, in particular, the bases for the $\g_0$-modules and the multilinear maps referred to in the following propositions. Notation \ref{nota-prop} applies.

\subsection{Decomposition of the space of the tracefree Ricci tensors}
\begin{prop}\label{prop-main_Ricci}
The space $\mfF$ of tensors of tracefree symmetric $2$-tensors admits a filtration
\begin{align*}
\{ 0 \} =: \mfF^2 \subset \mfF^1 \subset \mfF^0 \subset \mfF^{-1} := \mfF \, ,
\end{align*}
of $\prb$-modules
\begin{align*}
\mfF^i & = \{ \Phi_{ab} \in \mfF : {}^\mfF _\xi \Pi_{i-1}^0 (\Phi) = 0 \} \, , & i = 0, 1,
\end{align*}
where the maps ${}^\mfF _\xi \Pi_i^0$ are defined in appendix \ref{sec-projection}.

 Further, each $\prb$-module $\mfF^i/\mfF^{i+1}$ is an irreducible $\prb$-module  as described below:
 \vspace{-6.5mm}
\begin{center}
\begin{minipage}[b]{0.45\linewidth}\centering
\begin{displaymath}
{\renewcommand{\arraystretch}{1.5}
\begin{array}{||c|c|c||}
\hline
\text{$\prb$-mod} & \text{$\g_0$-mod} & \text{Dimension} \\
\hline
\mfF_0^0 & \mfV_{\frac{1}{2}} \circledcirc \mfV_{-\frac{1}{2}}  & m^2-1 \\
\hline
\end{array}}
\end{displaymath}
\end{minipage}
\begin{minipage}[b]{0.45\linewidth}\centering
\begin{displaymath}
{\renewcommand{\arraystretch}{1.5}
\begin{array}{||c|c|c||}
\hline
\text{$\prb$-mod} & \text{$\g_0$-mod} & \text{Dimension} \\
\hline
\mfF_{\pm1}^0 & \mfV_{\pm\frac{1}{2}} \circledcirc \mfV_{\pm\frac{1}{2}}  & \frac{1}{2}m (m+1) \\
\hline
\end{array}}
\end{displaymath}
\end{minipage}
\end{center}
\end{prop}

\subsection{Decomposition of the space of Cotton-York tensors}
\begin{prop}\label{prop-main_CY}
The space $\mfA$ of tensors with Cotton-York symmetries admits a filtration
\begin{align*}
 \{ 0 \} =: \mfA^{\frac{5}{2}} \subset \mfA^{\frac{3}{2}} \subset \mfA^{\frac{1}{2}} \subset \mfA^{-\frac{1}{2}} \subset \mfA^{-\frac{3}{2}} = \mfA \, ,
\end{align*}
of $\prb$-modules
\begin{align*}
\mfA^i & = \{ A_{abc} \in \mfA: {}^\mfA _\xi \Pi_{i-1}^k (A) = 0 \, , \, \, \mbox{for all $k$} \} \, , & i=-\frac{1}{2},\frac{1}{2},\frac{3}{2} \, ,
\end{align*}
where the maps ${}^\mfA _\xi \Pi_i^k$ are defined in appendix \ref{sec-projection}.

The associated graded $\prb$-module $\gr(\mfA) = \bigoplus_{i=-\frac{3}{2}}^{\frac{3}{2}} \gr_i (\mfA)$, where $\gr_i (\mfA) := \mfA^i/\mfA^{i-1}$, splits into a direct sum
\begin{align*}
 \gr_{\pm\frac{3}{2}} (\mfA) & \cong \mfA_{\pm\frac{3}{2}}^0 \, , &
 \gr_{\pm\frac{1}{2}} (\mfA) & \cong \mfA_{\pm\frac{1}{2}}^0 \oplus \mfA_{\pm\frac{1}{2}}^1 \oplus \mfA_{\pm\frac{1}{2}}^2 \, , 
\end{align*}
of irreducible $\prb$-modules  as described below:
\vspace{-6.5mm}
\begin{center}
\begin{minipage}[b]{0.45\linewidth}\centering
\begin{displaymath}
{\renewcommand{\arraystretch}{1.8}
\begin{array}{||c|c|c||}
\hline
\text{$\prb$-mod} & \text{$\g_0$-mod} & \text{Dimension} \\
\hline
\mfA_{\pm\frac{3}{2}}^0 & \mfV_{\pm\frac{1}{2}} \circledcirc \g_{\pm1} & \frac{1}{3}m (m^2-1) \\
\hline
\mfA_{\pm\frac{1}{2}}^0 & \mfV_{\pm\frac{1}{2}} \circledcirc \mfz_0 & m \\
\mfA_{\pm\frac{1}{2}}^1 & \mfV_{\mp\frac{1}{2}} \circledcirc \g_{\pm1} & \frac{1}{2}m (m-2)(m+1) \\
\mfA_{\pm\frac{1}{2}}^2 & \mfV_{\pm\frac{1}{2}} \circledcirc \slie_0 & \frac{1}{2}m (m+2)(m-1) \\
\hline
\end{array}}
\end{displaymath}
\end{minipage}
\end{center}
Further,
\begin{align*}
 \mfA_i^j & = \{ A_{abc} \in \mfA^i: {}^\mfA _\xi \Pi_i^k (A) = 0 \, , \, \, \mbox{for all $k \neq j$} \} /\mfA^{i+1} \, , & \mbox{for $|i|=\frac{1}{2}$.}
\end{align*}

Finally, the $\prb$-module $\gr(\mfA)$ can be expressed by means of the directed graph
\begin{equation*}
 \xymatrix@R=1em{
	& \mfA_{\frac{1}{2}}^2 \ar[r] \ar[ddr]	& \mfA_{-\frac{1}{2}}^2 \ar[ddr] & \\
       &  &  & &  \\
\mfA_{\frac{3}{2}}^0 \ar[uur] \ar[r] \ar[ddr]	& \mfA_{\frac{1}{2}}^1 \ar[uur]  \ar[ddr]	& \mfA_{-\frac{1}{2}}^1 \ar[r] &  \mfA_{-\frac{3}{2}}^0 \\
	  &  &  & & \\
       & \mfA_{\frac{1}{2}}^0 \ar[r] \ar[uur]	& \mfA_{-\frac{1}{2}}^0 \ar[uur] &  }
\end{equation*}
where an arrow from  $\mfA_i^j$ to $\mfA_{i-1}^k$ for some $i,j,k$ implies that $\breve{\mfA}_i^j \subset \g_1 \cdot \breve{\mfA}_{i-1}^k$ for any choice of irreducible $\g_0$-modules $\breve{\mfA}_i^j$ and $\breve{\mfA}_{i-1}^k$ isomorphic to $\mfA_i^j$ and $\mfA_{i-1}^k$ respectively, or equivalently that $ \ker {}^\mfA _\xi \Pi_i^j \subset \ker {}^\mfA _\xi \Pi_{i-1}^k$.
\end{prop}

\subsection{Decomposition of the space of Weyl tensors}
\begin{prop}\label{prop-main_Weyl}
The space $\mfC$ of tensors with Weyl symmetries admits of a filtration
\begin{align}\label{eq-Weyl_filtration}
 \{ 0 \} =: \mfC^3 \subset \mfC^2 \subset \mfC^1 \subset \mfC^0 \subset \mfC^{-1} \subset \mfC^{-2} := \mfC \, ,
\end{align}
of $\prb$-modules
\begin{align*}
  \mfC^i & = \{ C \in \mfC : {}^\mfC _\xi \Pi_{i-1}^k (C) = 0 \, , \, \, \mbox{for all $k$} \} \, , & i=-1,0,1,2,
\end{align*}
where the maps ${}^\mfC _\xi \Pi_i^k$ are defined in appendix \ref{sec-projection}.

The associated graded $\prb$-module $\gr(\mfC) = \bigoplus_{i=-2}^2 \gr_i (\mfC)$, where $\gr_i (\mfC) := \mfC^i/\mfC^{i-1}$, splits into a direct sum
\begin{align*}
 \gr_{\pm2} (\mfC) & \cong \mfC_{\pm2}^0 \, , & 
 \gr_{\pm1} (\mfC) & \cong \mfC_{\pm1}^0 \oplus \mfC_{\pm1}^1 \, , &
 \gr_0 (\mfC) & \cong \mfC_0^0 \oplus \mfC_0^1 \oplus \mfC_0^2 \oplus \mfC_0^3 \, ,
\end{align*}
of irreducible $\prb$-modules  as described below:
\vspace{-6.5mm}
\begin{center}
\begin{minipage}[b]{0.45\linewidth}\centering
\begin{displaymath}
{\renewcommand{\arraystretch}{1.5}
\begin{array}{||c|c|c||}
\hline
\text{$\prb$-mod} & \text{$\g_0$-mod} & \text{Dimension} \\
\hline
\mfC_{\pm2}^0 & \g_{\pm1} \circledcirc \g_{\pm1} & \frac{1}{12}m^2 (m^2-1) \\
\hline
\mfC_{\pm1}^0 & \g_{\pm1} \circledcirc \mfz_0 & \frac{1}{2}m (m-1) \\
\mfC_{\pm1}^1 & \g_{\pm1} \circledcirc \slie_0 & \frac{1}{3} m^2 (m^2 - 4) \\
\hline
\end{array}}
\end{displaymath}
\end{minipage}
\begin{minipage}[b]{0.45\linewidth}\centering
\begin{displaymath}
{\renewcommand{\arraystretch}{1.5}
\begin{array}{||c|c|c||}
\hline
\text{$\prb$-mod} & \text{$\g_0$-mod} & \text{Dimension} \\
\hline
\mfC_0^0 & \mfz_0 \circledcirc \mfz_0 & 1 \\
\mfC_0^1 & \slie_0 \circledcirc \mfz_0 & m^2 - 1 \\
\mfC_0^2 & \g_1 \circledcirc \g_{-1} & \frac{1}{4}m^2 (m+1) (m-3) \\
\mfC_0^3 & \slie_0 \circledcirc \slie_0 & \frac{1}{4}m^2 (m-1) (m+3) \\
\hline
\end{array}}
\end{displaymath}
\end{minipage}
\end{center}
with the proviso that $\mfC_0^2$ does not occur when $m=3$. Further
\begin{align*}
 \mfC_i^j & = \{ C \in \mfC^i : {}^\mfC _\xi \Pi_i^k (C) = 0 \, , \mbox{for all $k \neq j$} \} / \mfC^{i+1} \, , & \mbox{for $|i| \leq 1$.}
\end{align*}

Finally, the $\prb$-module $\gr(\mfC)$ can be expressed by means of the directed graph
\begin{equation*}
 \xymatrix@R=1em{
	&	& \mfC_0^3 \ar[ddr] & & \\
       & 	&  & &  \\
	& \mfC_1^1 \ar[uur] \ar[r] \ar[ddr]  & \mfC_0^2 \ar[r] \ar[ddr] & \mfC_{-1}^1 \ar[dr] &  \\
\mfC_2^0 \ar[ur] \ar[dr] & 	&  &  & \mfC_{-2}^0 \\
	  & \mfC_1^0 \ar[ddr] \ar[r] \ar[uur] & \mfC_0^1  \ar[uur] \ar[r] & \mfC_{-1}^0 \ar[ur] & \\
       & &  & &  \\
&	& \mfC_0^0  \ar[uur] & & }
\end{equation*}
where an arrow from  $\mfC_i^j$ to $\mfC_{i-1}^k$ for some $i,j,k$ implies that $\breve{\mfC}_i^j \subset \g_1 \cdot \breve{\mfC}_{i-1}^k$ for any choice of irreducible $\g_0$-modules $\breve{\mfC}_i^j$ and $\breve{\mfC}_{i-1}^k$ isomorphic to $\mfC_i^j$ and $\mfC_{i-1}^k$ respectively, or equivalently that $ \ker {}^\mfC _\xi \Pi_i^j \subset \ker {}^\mfC _\xi \Pi_{i-1}^k$.
\end{prop}

\section{Differential geometry of pure spinor fields}\label{sec-geometry}
Throughout this section, $(\mcM , g )$ will denote an $n$-dimensional complex Riemannian manifold, where $n=2m$, i.e.\ a complex manifold $\mcM$ equipped with a global non-degenerate holomorphic section $g_{ab}$ of $\odot^2 \Tgt^* \mcM$, where $\Tgt^* \mcM$ is the holomorphic cotangent bundle of $\mcM$. We also assume that $(\mcM,g)$ is equipped with a global holomorphic volume element and a spin structure. These data are equivalent to a holomorphic reduction of the structure group of the frame bundle $\mathrm{F} \mcM$ of $\mcM$ to $G := \Spin(2m,\C)$, with the Lie algebra $\g$. Holomorphic vector bundles over $\mcM$ can be constructed in terms of finite representations of $G$ or $\g$ in the standard way \cites{Salamon1989,vCap2009}. For instance, if $\mfV$ is the standard representation of $G$, then the holomorphic tangent bundle is simply $\Tgt \mcM := \mathrm{F} \mcM \times_G \mfV$, and holomorphic sections of $\Tgt \mcM$ can be viewed as equivariant holomorphic functions on $\mathrm{F} \mcM$ taking values in $\mfV$. Similarly, the spinor bundle, the chiral positive and negative spinor bundles, $\mcS$, $\mcS^+$ and $\mcS^-$ arise from the spinor representations $\mfS$, $\mfS^+$ and $\mfS^-$ of section \ref{sec-algebra} respectively.

The unique torsion-free metric-compatible holomorphic Levi-Civita connection and its associated covariant derivative on $\mcM$ will both be denoted by $\nabla_a$. Recall that for any other metric-compatible holomorphic connection $\partial_a$, the difference between $\nabla_a$ and $\partial_a$ is given by
\begin{align}\label{eq-LCconnection}
 \nabla_a V^b & = \partial_a V^b + \Gamma \ind{_{ac}^b} V^c \, ,
\end{align}
for any holomorphic vector field $V^a$, for some holomorphic tensor field $\Gamma_{abc}= \Gamma_{a[bc]}$. For instance, if $\partial_a$ preserves an orthonormal frame, then $\Gamma_{abc}$ can be identified with the components of the Levi-Civita connection $1$-form in that frame. The torsion of $\partial_a$ equals $2 \, \Gamma \ind{_{[ab]}^c}$. The Riemann tensor of $\nabla_a$ is given by
\begin{align*}
 2 \, \nabla \ind{_{\lb{a}}} \nabla \ind{_{\rb{b}}} V \ind{^d} & = R \ind{_{abc}^d} V \ind{^c} \, ,
\end{align*}
for any holomorphic vector field $V \ind{^a}$, and satisfies the Bianchi identity
\begin{align}\label{eq-Bianchi_id}
 \nabla \ind{_{\lb{a}}} R \ind{_{b \rb{c}de}} & = 0 \, .
\end{align}
The Riemann tensor splits into $\OO(2m,\C)$-irreducible components as
\begin{align} \label{eq-Riem_decomposition}
 R_{abcd} & = C_{abcd} + \frac{4}{n-2} \Phi_{\lb{c}|\lb{a}} g_{\rb{b}|\rb{d}} + \frac{2}{n(n-1)} R g_{c\lb{a}} g_{\rb{b} d} \, .
\end{align}
where $C \ind{_{abcd}}$ is the Weyl tensor, $\Phi \ind{_{ab}}$ the tracefree part of the Ricci tensor $R \ind{_{ab}}:= R \ind{_{acb}^c}$, and $R := R \ind{_a^a}$ the Ricci scalar. For $m>2$, this decomposition is also $\SO(2m,\C)$- and $G$-irreducible. When $m=2$, the Weyl tensor splits into a self-dual part and an anti-self-dual part, each $\SL(2,\C)$-irreducible. 

Sections of $\mcS^+$ and $\mcS^-$ will be denoted in the obvious way by means of the abstract index notation of section \ref{sec-algebra}, eg by $\xi^{A'}$ and $\zeta^A$ and similarly for their dual. The spin connection on $\mcS$, $\mcS^+$ and $\mcS^-$ can be constructed canonically as a lift of the Levi-Civita connection, and will also be denoted $\nabla_a$. It has the property of preserving the Clifford module structure of $\mcS$ in the sense that
\begin{align*}
\nabla_a ( V^b \gamma \ind{_b _{A'}^B} \xi^{A'} ) = ( \nabla_a V^b ) \gamma \ind{_b _A^{B'}} + V^b \gamma \ind{_b _{A'}^B} \nabla_a \xi \ind*{^{A'}} \, , 
\end{align*}
for any holomorphic vector field $V^a$ and positive spinor field $\xi^{A'}$, and similarly for the other spinor bundles. Lifting any other metric-compatible holomorphic connection $\partial_a$ to $\mcS$, we have, with reference to \eqref{eq-LCconnection},
\begin{align}\label{eq-LC-spin-connection}
 \nabla \ind{_a} \xi \ind*{^{A'}} & = \partial \ind{_a} \xi \ind*{^{A'}} - \frac{1}{4} \Gamma \ind{_{abc}} \gamma \ind{^{bc}_{B'}^{A'}} \xi \ind*{^{B'}} \, ,
\end{align}
for any holomorphic spinor field $\xi^{A'}$. Finally, the curvature of the spin connection is given by
\begin{align*}
 2 \, \nabla \ind{_{\lb{a}}} \nabla \ind{_{\rb{b}}} \xi^{A'} & = - \frac{1}{4} R \ind{_{abcd}} \gamma \ind{^{cd} _{B'}^{A'}} \xi^{B'} \, ,
\end{align*}
for any holomorphic spinor field $\xi^{A'}$, and similarly for spinors of other types.

\begin{nota}
As in sections \ref{sec-algebra}, \ref{sec-class-curvature} and \ref{sec-alg_intrinsic_torsion}, we shall use the short-hand notation
\begin{align*}
 \xi \ind*{_a^A} & := \xi \ind*{^{B'}} \gamma \ind{_a_{B'}^A} \, , &
 \xi \ind*{_{ab}^{A'}} & := \xi \ind*{^{B'}} \gamma \ind{_{ab}_{B'}^{A'}} \, , &
 \eta \ind*{_a_A} & := \gamma \ind{_a_A^{B'}} \eta \ind*{_{B'}} \, , &
 \eta \ind*{_{ab}_{A'}} & := \gamma \ind{_{ab}_{A'}^{B'}} \eta \ind*{_{B'}} \, , \\
 \zeta \ind*{_a^{A'}} & := \zeta \ind*{^B} \gamma \ind{_a_B^{A'}} \, , &
 \zeta \ind*{_{ab}^A} & := \zeta \ind*{^B} \gamma \ind{_{ab}_B^A} \, , & \ldots
\end{align*}
and so on, for spinors $\xi^{A'}$, $\eta_{A'}$, $\zeta^A$.
\end{nota}

\begin{ass}
Throughout this section, we shall assume that our tensor and spinor fields depend holomorphically on $\mcM$, and $\Gamma( \cdot)$ will denote the space of holomorphic sections of a holomorphic fiber bundle. The reader will sometimes be reminded of this assumption for clarity. The application of this work to real pseudo-Riemannian manifolds is given in section \ref{sec-real-geom}.

Further, unless otherwise stated, we shall assume $m>2$ for definiteness, although many of the results here specialise to the case $m=2$ too. Appendix \ref{sec-spin-calculus4} contains a brief review of this case.

Finally, let us emphasize that our results will be essentially local.
\end{ass}

\subsection{Projective pure spinor fields}
We first make the following definition.
\begin{defn}
An \emph{almost null structure} $\mcN$ on $(\mcM,g)$ is a rank-$m$ distribution that is totally null, i.e.\ $g(v,w)=0$ for all sections $v$, $w$ of $\mcN$. An almost null structure is \emph{(anti-)self-dual} if it is annihilated by an (anti-)self-dual $m$-form.
\end{defn}
A self-dual, respectively, anti-self-dual, almost null structure will also be referred to as an $\alpha$-plane, respectively, $\beta$-plane, distribution. We shall denote the bundle of all self-dual, respectively, anti-self-dual, almost null structures on $(\mcM,g)$ by $\Gr_m^+( \Tgt \mcM,g)$, respectively $\Gr_m^-( \Tgt \mcM,g)$. These bundles have fibers isomorphic to the $\frac{1}{2}m(m-1)$-dimensional family $\Gr_m^+( \Tgt_p \mcM,g)$ of $\alpha$-planes, respectively $\Gr_m^-( \Tgt_p \mcM,g)$ of $\beta$-planes, in $\Tgt_p \mcM$ at any point $p$.

The existence of an almost null structure does not require a spin structure. But the latter allows us to identify almost null structures  with \emph{pure spinor fields} up to scale, i.e.\ spinor fields satisfying condition \eqref{eq-purity_cond} or \eqref{eq-purity_cond_Cartan} at every point. For, by Proposition \ref{prop-fundamental}, a totally null  $m$-dimensional vector subspace of the tangent space at a point can be identified with a pure spinor up to scale. Thus, we shall also identify the bundles $\Gr_m^\pm( \Tgt \mcM,g)$ with the bundles of projective pure spinors of either chirality.

Now, let $[\xi^{A'}]$ be a holomorphic projective pure spinor field, i.e.\ a holomorphic section of $\Gr_m^+( \Tgt \mcM,g)$, and denote by $\mcN_\xi$ its associated almost null structure, which can also be assumed to be holomorphic. Then the structure group of $\mathrm{F} \mcM$ is reduced to the stabiliser $P$ of $[\xi^{A'}]$ as described in section \ref{sec-algebra}, and we obtain filtrations of vector bundles, together with their associated graded vector bundles, constructed from finite representations of $P$ or its Lie algebra $\prb$. For instance, the filtration $\prb$-modules $\{ \mfC^i \}$ on the space $\mfC$ of tensors with Weyl symmetries gives rise to a filtration of vector subbundles $\mcC^i$ over $\mcM$, where $\mcC^i := \mathrm{F} \mcM \times_P \mfC^i$, and so does the story go for the associated graded $\prb$-modules $\gr_i(\mfC)$, its irreducible modules $\mfC_i^j$ and the graded $\g_0$-modules $\mfC_i$ in the obvious way and notation \cite{vCap2009}. We shall then recycle the notation of sections  \ref{sec-alg_intrinsic_torsion} and \ref{sec-class-curvature},  and appendix \ref{sec-spinor-descript} in this curved setting as the need arises. We shall characterise the (local) algebraic degeneracy of curvature tensors with respect to $[\xi^{A'}]$ by means of the maps ${}^\mfF _\xi \Pi_i^j$, ${}^\mfA _\xi \Pi_i^j$ and ${}^\mfC _\xi \Pi_i^j$.

\subsubsection{Intrinsic torsion}\label{sec-geom-intrinsic-torsion}
Having singled out a holomorphic projective pure spinor field $[\xi^{A'}]$ on $\mcM$, it remains to characterise the various degrees of `integrability' of the $P$-structure it defines. Following \cite{Salamon1989}, the $P$-structure being integrable to first order, i.e.\ there exists a torsion-free connection compatible with the $P$-structure, is essentially equivalent to $[\xi^{A'}]$ being parallel with respect to the Levi-Civita connection $\nabla_a$, ie
\begin{align}\label{eq-recurrent_spinor_basic}
\nabla_a [ \xi^{A'} ] & = 0 \, , & \mbox{ie} & &  \nabla_a \xi^{B'} & = \alpha \ind{_a} \xi \ind*{^{B'}} \, ,
  \end{align}
for some $1$-form $\alpha \ind{_a}$. Equation \eqref{eq-recurrent_spinor_basic} can be more conveniently expressed as
\begin{align}\label{eq-recurrent_spinor}
 ( \nabla \ind{_a} \xi \ind*{^b^B} ) \xi \ind*{_b^C} & = 0 \, , &  & \mbox{or equivalently,} &  ( \nabla \ind{_a} \xi \ind*{^{[B'}} ) \xi \ind*{^{C']}} & = 0 \,.
\end{align}
which is also equivalent to the Levi-Civita connection being $\prb$-valued.

The obstruction to \eqref{eq-recurrent_spinor} is known as the \emph{intrinsic torsion} or \emph{structure function} of the $P$-structure defined by $[\xi^{A'}]$  \cites{Chern1953,Bernard1960,Salamon1989}. Measuring the extent to which \eqref{eq-recurrent_spinor} fails can be achieved by characterising
\begin{align}\label{eq-intrinsic_torsion}
 ( \nabla \ind{_a} \xi \ind*{^b^B} ) \xi \ind*{_b^C} \quad \in \quad \mfV^{-\frac{1}{2}} \otimes \wedge^2 \mfS^{\frac{m-2}{4}} \, ,
\end{align}
as an element of a $\prb$-submodule of $\mfW = \mfV \otimes \left( \g/\prb \right)$. Here, we have made use of the fact that the connection $1$-form is $\g$-valued, and the pair of skew-symmetric spinor indices of \eqref{eq-intrinsic_torsion} projects out the part of $\g$ not in $\prb$.
This can be made more explicit by choosing a connection $\partial_a$ that preserves $[\xi^{A'}]$ so that \eqref{eq-LC-spin-connection} becomes
\begin{align}\label{eq-nabla-Gamma}
 \nabla \ind{_a} \xi \ind*{^{A'}} & = - \frac{1}{4} \Gamma \ind{_{abc}} \gamma \ind{^{bc}_{B'}^{A'}} \xi \ind*{^{B'}} \pmod{ \alpha_a \xi^{A'} } \, .
\end{align}
In fact, with no loss of generality, we could choose $\partial_a$ to preserve a chosen $\xi^{A'}$. This makes contact with the description of elements of $\mfW$ given in section \ref{sec-alg_intrinsic_torsion}. The expression \eqref{eq-nabla-Gamma} allows us to express the algebraic characterisation of the intrinsic torsion of the $P$-structure of Proposition \ref{prop-intrinsic_torsion} in terms of \eqref{eq-intrinsic_torsion}. This yields the next proposition, and we leave the details of the proof the reader.

\begin{prop} \label{prop-intrinsic_torsion_connection} Let $[\xi^{A'}]$ be a holomorphic projective pure spinor field on $(\mcM,g)$, and let $\Gamma _{abc} \xi^{bB} \xi^{cC} \in \mfW$ be its associated intrinsic torsion. Then, pointwise,
 \begin{itemize}
\item ${}^\mfW _\xi \Pi _{-\frac{3}{2}}^0 (\Gamma) = 0$ if and only if
\begin{align}
( \xi \ind*{^a^{\lb{A}}} \nabla \ind{_a} \xi \ind*{^b^B} ) \xi \ind*{_b^{\rb{C}}} & = 0  \, ; \label{eq-skew-foliating}  
\end{align}
  \item ${}^\mfW _\xi \Pi _{-\frac{3}{2}}^1 (\Gamma) = 0$ if and only if
\begin{align}
( \xi \ind*{^a^{\lp{A}}} \nabla \ind{_a} \xi \ind*{^b^{\rp{B}}} ) \xi \ind*{_b^C} & = 0 \, ; \label{eq-sym-foliating}
\end{align}
\item ${}^\mfW _\xi \Pi _{-\frac{1}{2}}^0 (\Gamma) = 0$ if and only if
\begin{align}
\xi \ind*{^{A'}} \nabla \ind{_b} \xi \ind*{^b^{B}} - \xi \ind*{^b^B} \nabla \ind{_b} \xi \ind*{^{A'}} & = 0 \label{eq-proj-Dirac} \, ;
\end{align}
\item ${}^\mfW _\xi \Pi _{-\frac{1}{2}}^1 (\Gamma) = 0$ if and only if
\begin{align}
 ( \nabla \ind{_a} \xi \ind*{^b^B} ) \xi \ind*{_b^C} + \frac{2}{m-1} \left( \xi \ind*{_a^{\lb{B}}} \nabla \ind{_b} \xi \ind*{^b^{\rb{C}}} + \xi \ind*{^b^{\lb{B}}} \nabla \ind{_b} \xi \ind*{_a^{\rb{C}}} \right) & = 0 \, , & m>3 \, ; \label{eq-proj-twistor} \\
( \nabla \ind{_a} \xi \ind*{^b^B} ) \xi \ind*{_b^C} + \left( \xi \ind*{_a^{\lb{B}}} \nabla \ind{_b} \xi \ind*{^b^{\rb{C}}} + \xi \ind*{^b^{\lb{B}}} \nabla \ind{_b} \xi \ind*{_a^{\rb{C}}} \right) - \frac{2}{3} \xi \ind*{^b^A} ( \nabla \ind{_b} \xi _A ) \gamma \ind{_a^{BC}} & = 0 \, , & m=3 \, ; \label{eq-proj-twistor6a}
\end{align}
where we have made use of the isomorphism $\mfS^+ \cong (\mfS^-)^*$ when $m=3$.
 \end{itemize}

All of these statements are independent of the scaling of $\xi^{A'}$. 
\end{prop}

\begin{rem}
For the case $m=3$, we refer to appendix \ref{sec-spin-calculus6} where conditions \eqref{eq-skew-foliating}, \eqref{eq-sym-foliating}, \eqref{eq-proj-Dirac} and \eqref{eq-proj-twistor6a} are given as conditions \eqref{eq-skew-foliating6}, \eqref{eq-sym-foliating6}, \eqref{eq-proj-Dirac6} and \eqref{eq-proj-twistor6}.
\end{rem}

\subsubsection{Geometric properties}

\begin{defn}
An almost null structure $\mcN$ is said to be \emph{totally geodetic} if $\nabla_X Y \in \Gamma(\mcN)$ for all $X,Y \in \Gamma(\mcN)$.
\end{defn}

Clearly, if $\mcN$ is totally geodetic, it is integrable as a distribution, i.e.\ $[ \Gamma(\mcN) , \Gamma(\mcN) ] \subset \Gamma(\mcN)$. By the Frobenius theorem, $\mcN$ is locally tangent to a foliation by $m$-dimensional complex submanifolds of $\mcM$, each of which is a totally geodetic and totally null. In fact, the converse is also true \cite{Taghavi-Chabert2012}.

\begin{lem}\label{lem-int-geod}
An almost null structure is integrable as a distribution if and only if it is totally geodetic.
\end{lem}

We shall henceforth also refer to an integrable almost null structure as a \emph{totally geodetic null structure}.

\begin{prop}[\cites{Hughston1988}] Let $\mcN_\xi$ be an almost null structure with associated projective pure spinor field $[ \xi^{A'} ]$ on $(\mcM,g)$. Then $\mcN_\xi$ is totally geodetic if and only if $[\xi^{A'}]$ satisfies
\begin{align}\label{eq-foliating_spinor}
 ( \xi \ind*{^a^A} \nabla \ind{_a} \xi \ind*{^b^B} ) \xi \ind*{_b^C} & = 0 \, ,
  &  & \mbox{or equivalently,} &  (  \xi \ind*{^a^A} \nabla \ind{_a} \xi \ind*{^{[B'}} ) \xi \ind*{^{C']}} & = 0 \,.
\end{align}
\end{prop}

\begin{proof}
Using the decomposition \eqref{eq-connection1form} together with \eqref{eq-nabla-Gamma}, we find $ ( \xi \ind*{^a^A} \nabla \ind{_a} \xi \ind*{^b^B} ) \xi \ind*{_b^C} = \Gamma \ind{^{ABC}}$ for some functions $\Gamma \ind{^{ABC}}$ that we can identify with the components of the Levi-Civita connection with respect to a basis for $\mcN_\xi$. But $\mcN_\xi$ being totally geodetic is equivalent to $g \ind{_{ab}} X \ind{^a} Y \ind{^c} \nabla \ind{_c} Z \ind{^b} = 0$, for all $X^a$, $Y^a$, $Z^a$ as shown in \cite{Taghavi-Chabert2012}. Hence the result.
\end{proof}

Equation \eqref{eq-foliating_spinor} also appears (in a slightly different form) in \cite{Lawson1989} in the almost Hermitian setting.

\begin{defn}
We shall refer to a projective pure spinor field $[ \xi^{A'} ]$ satisfying \eqref{eq-foliating_spinor} as \emph{geodetic}.
\end{defn}

\paragraph{Conformal invariance}
With reference to appendix \ref{sec-conformal}, one can prove
\begin{prop}\label{prop-conformal-invariance-spinor}
Conditions \eqref{eq-skew-foliating}, \eqref{eq-sym-foliating} and \eqref{eq-proj-twistor} are conformally invariant.
 
 Suppose further that $[ \xi \ind*{^{A'}} ]$ satisfies \eqref{eq-proj-twistor} and
\begin{align}\label{eq-Lee-spinor}
 \xi \ind*{^{A'}} \nabla \ind{_b} \xi \ind*{^b^{B}} - \xi \ind*{^b^B} \nabla \ind{_b} \xi \ind*{^{A'}} = - (m-1) \xi \ind*{^{A'}} \xi \ind*{^b^B} \nabla \ind{_b} f \, ,
\end{align}
for some holomorphic function $f$. Then there exists a holomorphic conformal rescaling of the metric such that $[ \xi \ind*{^{A'}} ]$ is parallel, i.e.\ it satisfies \eqref{eq-recurrent_spinor}.
\end{prop}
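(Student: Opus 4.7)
The strategy is to show that, under a conformal rescaling $\hat g_{ab} = \ee^{2\Upsilon}g_{ab}$, the tensor $(\nabla_a \xi^{bB})\xi_b^C$ representing the intrinsic torsion of $\langle\xi^{A'}\rangle$ acquires a correction term lying entirely in the Lee-type summand $\mfW_{-\frac{1}{2}}^0$ of Proposition~\ref{prop-intrinsic_torsion}. Once this is established, conformal invariance of the conditions characterising the vanishing of the \emph{other} three irreducible summands of $\mfW$ is immediate. Starting from the standard formula
\begin{equation*}
\hat{\nabla}_a \xi^{A'} - \nabla_a \xi^{A'} = w\,\Upsilon_a\,\xi^{A'} - \tfrac{1}{2}\,\Upsilon_b\,\gamma_a{}^b{}_{B'}{}^{A'}\xi^{B'}, \qquad \Upsilon_a := \nabla_a\Upsilon,
\end{equation*}
collected in appendix~\ref{sec-conformal} (with $w$ the conformal weight assigned to $\xi^{A'}$), together with the transformation of the Clifford generators $\hat\gamma_a = \ee^\Upsilon \gamma_a$, I would substitute into $(\hat\nabla_a\hat\xi^{bB})\hat\xi_b^C$. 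The $w\,\Upsilon_a\,\xi^{A'}$ contribution drops out by the purity condition \eqref{eq-purity_cond} in the form $\xi^{bB}\xi_b^C = 0$, leaving a combination of terms of schematic form $\Upsilon_b\,\xi^{b[B}\xi_a^{C]}$ plus further pure-trace pieces that also collapse by purity.

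Comparing with the explicit expressions for the projectors ${}^\mfW_\xi\Pi_{-\frac{3}{2}}^0$, ${}^\mfW_\xi\Pi_{-\frac{3}{2}}^1$, ${}^\mfW_\xi\Pi_{-\frac{1}{2}}^0$ and ${}^\mfW_\xi\Pi_{-\frac{1}{2}}^1$ given in section~\ref{sec-alg_intrinsic_torsion}, the surviving correction is visibly a section of the Lee-form summand $\mfW_{-\frac{1}{2}}^0$: further contraction with $\xi^{aA}$ produces a factor $\xi^{aA}\xi^{bB}=0$ (or $\xi^{[a A}\xi^{b]B} = 0$ after antisymmetrisation via \eqref{eq-purity_cond_Cartan}), killing both ${}^\mfW_\xi\Pi_{-\frac{3}{2}}^0$ and ${}^\mfW_\xi\Pi_{-\frac{3}{2}}^1$; and the $(B,C)$-trace of the expression is the same Lee vector, so it lies in $\ker {}^\mfW_\xi\Pi_{-\frac{1}{2}}^1$. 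Consequently each of \eqref{eq-skew-foliating}, \eqref{eq-sym-foliating} and \eqref{eq-proj-twistor}, asserting the vanishing of a summand \emph{different} from $\mfW_{-\frac{1}{2}}^0$, is preserved by the conformal rescaling.

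For the second assertion, suppose \eqref{eq-proj-twistor} and \eqref{eq-Lee-spinor} hold. By Proposition~\ref{prop-intrinsic_torsion_connection} the intrinsic torsion of $\langle\xi^{A'}\rangle$ then lies entirely in the Lee-form summand $\mfW_{-\frac{1}{2}}^0$, and \eqref{eq-Lee-spinor} identifies this summand with the exact $1$-form $-(m-1)\nabla_a f$. By the calculation above, the conformal correction to the Lee form is a nonzero constant multiple of $\Upsilon_a$ (depending on $m$ and $w$), so choosing $\Upsilon := f$ with $w$ fixed so that the two coefficients match cancels the Lee contribution in the rescaled metric. Since the remaining three summands were already zero and remain so by conformal invariance, the full intrinsic torsion of $\langle\xi^{A'}\rangle$ with respect to $\hat g_{ab}$ vanishes, yielding $\hat\nabla_a\xi^{A'} = \alpha_a\,\xi^{A'}$ for some $1$-form $\alpha_a$, which is \eqref{eq-recurrent_spinor_basic}.

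The principal technical obstacle is the careful index bookkeeping in the first step: one must verify that \emph{no} component of the conformal correction spills into $\mfW_{-\frac{3}{2}}^0$, $\mfW_{-\frac{3}{2}}^1$ or $\mfW_{-\frac{1}{2}}^1$, which requires tracking the purity conditions \eqref{eq-purity_cond}, \eqref{eq-purity_cond_Cartan} and the Clifford contractions of Lemma~\ref{lem-technical} through each step of the expansion. The case $m=3$ needs a separate but structurally identical check because of the extra $\gamma_a{}^{BC}$-trace term in ${}^\mfW_\xi\Pi_{-\frac{1}{2}}^1$ noted in Remark~\ref{rem-six-dim-exception}.
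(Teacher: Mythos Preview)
Your approach is correct and essentially the same as the paper's. The paper simply invokes the transformation rules collected in appendix~\ref{sec-conformal}, where the key identities
\[
 \left( \hat{\nabla}_a \hat{\xi}^{bB} \right) \hat{\xi}_b^C = \left( \nabla_a \xi^{bB} \right) \xi_b^C - 2\,\Upsilon_b\,\xi^{b[B}\xi_a^{C]}
\]
and its consequences are recorded explicitly; these show directly that the conformal correction lands in $\mfW_{-\frac{1}{2}}^0$ and that the Lee part transforms by $+(m-1)\Upsilon_b\xi^{bB}\xi^{A'}$, so $\Omega = \ee^f$ kills it. One small point: your phrasing ``with $w$ fixed so that the two coefficients match'' is misleading, since the conformal weight of $\xi^{A'}$ is not a free parameter but is fixed by the paper's conventions (here $\hat{\xi}^{A'}=\xi^{A'}$); the coefficients simply do match once you use that convention, and no adjustment is needed.
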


\paragraph{Curvature conditions}
\begin{prop}[\cites{Hughston1988,Taghavi-Chabert2011,Taghavi-Chabert2012}]\label{prop-int_cond_foliating_spinor}
 Let $\xi^{A'}$ be a geodetic pure spinor on $(\mcM,g)$, i.e.\ $\xi^{A'}$ satisfies \eqref{eq-foliating_spinor}, i.e.\ its associated almost null structure $\mcN_\xi$ is totally geodetic (or equivalently, integrable). Then
\begin{align} \label{eq-int_cond_foliating_spinor}
 \xi \ind*{^a ^A} \xi \ind*{^b ^B} \xi \ind*{^c ^C} \xi \ind*{^d ^D} C_{abcd} & = 0 \, , & \mbox{ie} & & {}^\mfC _\xi \Pi_{-2}^0 (C) & = 0 \, .
\end{align}
\end{prop}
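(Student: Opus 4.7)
The plan is to translate the tensorial statement into a coordinate-free claim about the Weyl tensor restricted to $\mcN$, and then to exploit the geometry of an integrable maximally null distribution. By Lemma \ref{lem-vector_decomposition}, the map $\xi^{aA}$ surjects onto $\mcN$ at each point, so the statement \eqref{eq-int_cond_foliating_spinor} is equivalent to the pointwise vanishing of $C_{abcd} X^a Y^b Z^c W^d$ for arbitrary $X^a, Y^a, Z^a, W^a \in \mcN$. First I would use the Riemann decomposition \eqref{eq-Riem_decomposition}: since the purity condition \eqref{eq-purity_cond} is $g_{ab} \xi^{aA} \xi^{bB} = \xi^{aA} \xi_a{}^B = 0$, the $\alpha$-plane $\mcN$ is totally null, and the two remaining terms of \eqref{eq-Riem_decomposition}, which are built from $g_{ab}$ paired with Ricci and scalar curvature, all drop out upon total contraction with four vectors of $\mcN$. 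It therefore suffices to show $R_{abcd} X^a Y^b Z^c W^d = 0$ for $X,Y,Z,W \in \Gamma(\mcN)$ locally.

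Next I would invoke the fact, recalled in the paragraph preceding the proposition, that in the holomorphic category an integrable $\alpha$-plane distribution is tangent to a foliation by totally geodetic $m$-dimensional submanifolds; equivalently, $\nabla_X Y \in \Gamma(\mcN)$ whenever $X, Y \in \Gamma(\mcN)$. For completeness, this can be verified directly from the foliating hypothesis by a short argument: metric compatibility gives
\begin{align*}
g(\nabla_X Y, Z) + g(Y, \nabla_X Z) = 0
\end{align*}
for any third section $Z \in \Gamma(\mcN)$; combining three such identities with the torsion-freeness of $\nabla_a$ and the integrability bracket $[X, Z] \in \Gamma(\mcN)$ closes up to force $g(\nabla_X Y, Z) = 0$, so $\nabla_X Y$ lies in $\mcN^\perp$, which equals $\mcN$ by maximal nullity.

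Finally, for $X, Y, Z, W \in \Gamma(\mcN)$ chosen as local extensions of fixed vectors at a point, I would expand
\begin{align*}
R(X, Y, Z, W) = g(\nabla_X \nabla_Y Z, W) - g(\nabla_Y \nabla_X Z, W) - g(\nabla_{[X, Y]} Z, W),
\end{align*}
and treat each term by the Leibniz rule, e.g.\ $g(\nabla_X \nabla_Y Z, W) = X \, g(\nabla_Y Z, W) - g(\nabla_Y Z, \nabla_X W)$. The totally geodetic property implies that both $\nabla_Y Z$ and $\nabla_X W$ stay in $\Gamma(\mcN)$, so $g(\nabla_Y Z, W)$ vanishes identically and likewise $g(\nabla_Y Z, \nabla_X W) = 0$; the other two summands are handled analogously, using $[X, Y] \in \Gamma(\mcN)$ for the third. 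Hence $R(X, Y, Z, W) = 0$ at the point, completing the proof. The only non-formal ingredient is the totally geodetic step, which is standard for maximally null integrable distributions in a torsion-free metric connection and is the mildest obstacle; the rest of the argument is bookkeeping with \eqref{eq-Riem_decomposition} and the Leibniz rule.
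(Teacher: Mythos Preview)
Your proof is correct but takes a genuinely different route from the paper's. You work geometrically at the level of the distribution $\mcN$: you establish that $\mcN$ is totally geodesic (i.e.\ $\nabla_X Y \in \Gamma(\mcN)$ for $X,Y \in \Gamma(\mcN)$) via the standard Koszul-type cycling argument, and then expand $R(X,Y,Z,W)$ term by term using the Leibniz rule, each term vanishing because all the intermediate vectors remain in the totally null $\mcN$. The paper instead works spinorially throughout: it rewrites the foliating condition as $\xi^{aA}\nabla_a\xi^{B'}=\alpha^A\xi^{B'}$, differentiates once more along $\mcN$, and commutes the covariant derivatives to produce the spinor curvature $-\tfrac{1}{4}R_{abcd}\xi^{aA}\xi^{bB}\gamma^{cd}{}_{D'}{}^{C'}\xi^{D'}$ directly, from which $R_{abcd}\xi^{aA}\xi^{bB}\xi^{cC}\xi^{dD}=0$ follows. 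Your approach is more elementary and makes the totally geodesic geometry explicit, requiring no spinor-curvature identity; the paper's approach is shorter once the spin connection curvature formula is in hand, stays within the spinor calculus that drives the rest of the article, and as a by-product yields the sharper intermediate identity \eqref{eq-Riem-int-cond-foliating}.
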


\begin{proof}
 We first note that \eqref{eq-foliating_spinor} can be rewritten as
$\xi \ind*{^a ^A} \nabla_a \xi^{B'} = \alpha \ind{^A} \xi \ind*{^{B'}}$ for some $\alpha \ind{^A}$. Differentiating it along $\mcN_\xi$ yields
\begin{align*}
 \alpha \ind{^A} \alpha \ind{^B} \xi \ind*{^{C'}} + \xi \ind*{^a^A} \xi \ind*{^b^B} \nabla \ind{_a} \nabla \ind{_b} \xi \ind*{^{C'}} = (\xi \ind*{^a^A} \nabla \ind{_a} \alpha \ind{^B} ) \xi \ind*{^{C'}} + \alpha \ind{^A} \alpha \ind{^B} \xi \ind*{^{C'}} \, .
\end{align*}
Commuting the derivatives leads to 
\begin{align}\label{eq-Riem-int-cond-foliating}
 - \frac{1}{4} R \ind{_{abcd}} \xi \ind*{^a^A} \xi \ind*{^b^B} \gamma \ind{^{cd}_{D'}^{C'}} \xi \ind*{^{D'}} & = 2 \, ( \xi \ind*{^a^{\lb{A}}}   \nabla \ind{_a} \alpha \ind{^{\rb{B}}} ) \xi \ind*{^{A'}} \, ,
\end{align}
which is equivalent to $\xi \ind*{^a ^A} \xi \ind*{^b ^B} \xi \ind*{^c ^C} \xi \ind*{^d ^D} R_{abcd} = 0$. The decomposition of the Riemann tensor together with the purity condition concludes the proof.
\end{proof}

Closing this section, we give the integrability condition for the existence of a parallel projective pure spinor.
\begin{prop}\label{prop-int_cond_recurrent_spinor}
Let $[\xi^{A'}]$ be a parallel projective pure spinor on $(\mcM,g)$, i.e.\ $\xi^{A'}$ satisfies \eqref{eq-recurrent_spinor}. Then
\begin{align}
\xi \ind*{^a ^A} \xi \ind*{^b ^B} R_{abcd} & = 0 \, , \label{eq-recurrent_spinor_Riemman} \\
\xi \ind*{^a ^A} \xi \ind*{^b ^B} \Phi_{ab} & = 0 \, , & \mbox{ie} & & {}^\mfF _\xi \Pi^0_{-1} (\Phi) & = 0 \label{eq-recurrent_spinor_Ricci} \\
\xi \ind*{^a^A} \xi \ind*{^b^B} \xi \ind*{^c^C}  C \ind{_{abcd}} & = 0 \, , & \mbox{ie} & & {}^\mfC _\xi \Pi_{-1}^0 (C) = {}^\mfC _\xi \Pi_{-1}^1 (C) & = 0 
\label{eq-recurrent_spinor_Weyl3}
\end{align}
and in addition, when $m>3$,
\begin{align}\label{eq-recurrent_spinor_Weyl3>}
 {}^\mfC _\xi \Pi_0^2 (C) = 0 \, .
\end{align}

Further, 
\begin{align*}
R & = 0 & \Longleftrightarrow & & {}^\mfC _\xi \Pi_0^0 (C) = 0 \quad \left( \mbox{ie} \quad \xi \ind*{^{ab}^{A'}} C \ind{_{abcd}} \xi \ind*{^{cd}^{D'}} = 0 \, , \right)
\end{align*}
and in addition, when $m>2$,
\begin{align*}
 {}^\mfF _\xi \Pi^0_0 (\Phi) & = 0 \quad  \left( \mbox{ie} \quad  \xi \ind*{^a ^A} \Phi_{ab} = 0 \right) & \Longleftrightarrow & & {}^\mfC _\xi \Pi_0^1 (C) & = 0 \, .
\end{align*}
\end{prop}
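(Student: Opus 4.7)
The proof rests on the Ricci identity applied to $\xi^{A'}$ and to the induced recurrent vector-valued spinor $\xi^{cC} := \xi^{B'}\gamma^c{}_{B'}{}^C$. From $\nabla_a\xi^{B'} = \alpha_a\xi^{B'}$ one has $\nabla_{[a}\nabla_{b]}\xi^{B'} = (\nabla_{[a}\alpha_{b]})\xi^{B'}$; equating this with the curvature formula $2\nabla_{[a}\nabla_{b]}\xi^{B'} = -\tfrac{1}{4}R_{abcd}\gamma^{cd}{}_{D'}{}^{B'}\xi^{D'}$ yields the master identity
\[ R_{abcd}\xi^{cdB'} = -8(\nabla_{[a}\alpha_{b]})\xi^{B'}. \quad (\star) \]
Since the $\gamma$-matrices are covariantly constant, $\xi^{cC}$ is also recurrent with the same $1$-form $\alpha_a$; applying the Ricci identity to it and contracting the resulting expression with $\xi_c{}^E$, the purity relations $\xi^{cC}\xi_c{}^E = \xi^{cD}\xi_c{}^E = 0$ annihilate the spinorial curvature contributions, leaving $R_{abcd}\xi^{cE}\xi^{dC} = 0$. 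The pair symmetry $R_{abcd} = R_{cdab}$ then produces \eqref{eq-recurrent_spinor_Riemman}.

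Substituting the Riemann decomposition \eqref{eq-Riem_decomposition} into \eqref{eq-recurrent_spinor_Riemman} yields
\[ \xi^{aA}\xi^{bB}C_{abcd} + \tfrac{1}{n-2}N^{AB}_{cd} + \tfrac{R}{n(n-1)}M^{AB}_{cd} = 0, \]
where $M^{AB}_{cd} = \xi_c{}^A\xi_d{}^B - \xi_d{}^A\xi_c{}^B$ and $N^{AB}_{cd}$ is obtained from $M^{AB}_{cd}$ by replacing one $\xi_c{}^A$ with $\phi_c^A := \Phi_{ca}\xi^{aA}$. A further contraction with $\xi^{cC}$ kills the scalar term by purity and gives
\[ \xi^{aA}\xi^{bB}\xi^{cC}C_{abcd} = -\tfrac{1}{n-2}\bigl(\psi^{CA}\xi_d{}^B - \psi^{CB}\xi_d{}^A\bigr), \qquad \psi^{AB} := \xi^{aA}\xi^{bB}\Phi_{ab}. \quad (\ddagger) \]
The Ricci condition $\psi = 0$ (equation \eqref{eq-recurrent_spinor_Ricci}) and the Weyl condition \eqref{eq-recurrent_spinor_Weyl3} are extracted by a representation-theoretic argument: both sides of $(\ddagger)$ are $\prb$-equivariant tensors decomposing into distinct irreducibles of the ambient $\prb$-module, whose explicit characterisation via the maps ${}^\mfC_\xi\Pi_i^j$ and ${}^\mfF_\xi\Pi_i^j$ of Propositions \ref{prop-main_Weyl} and \ref{prop-main_Ricci} separates the $C$-piece, $\Phi$-piece, and $R$-piece. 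Equivalently, an explicit computation --- tracing with an auxiliary dual spinor $\eta^d{}_A$ (so that $\xi^{aA}\eta^d{}_A = \tfrac{1}{2}g^{ad} - E^{ad}$ for the grading element $E^{ab}$), using the Weyl tracefreeness $g^{ad}C_{abcd} = 0$, and invoking Lemma \ref{lem-technical} together with the identity operator trace $I_A{}^A = m$ and $\psi^{CB}I_B{}^A = \psi^{CA}$ --- isolates $\psi^{AB} = 0$; substituting back into $(\ddagger)$ then gives \eqref{eq-recurrent_spinor_Weyl3}.

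For the higher-filtration condition \eqref{eq-recurrent_spinor_Weyl3>} when $m > 3$, apply the same Ricci-identity scheme to the recurrent bivector-valued spinor $\xi^{cdC'} := \xi^{B'}\gamma^{cd}{}_{B'}{}^{C'}$; the resulting identity, combined with \eqref{eq-recurrent_spinor_Weyl3}, projects onto the irreducible $\mfC_0^2$ and forces its vanishing (the condition is absent for $m = 3$ because $\mfC_0^2$ does not occur). For the two equivalences, contract the master identity $(\star)$ with $\xi^{abA'}$ and $\xi^{bB}$ respectively, then extract the appropriate $A'B'$-symmetric component: Cartan's purity \eqref{eq-purity_cond_Cartan} annihilates the symmetric part of the relevant RHS term of $(\star)$, while the Clifford trace identity $\gamma^{ab}\gamma_{ab} = -n(n-1)$ relates $\xi_{ab}{}^{A'}\xi^{abB'}$ to a multiple of $\xi^{(A'}\xi^{B')}$, thereby identifying the scalar-curvature contribution with ${}^\mfC_\xi\Pi_0^0(C)$ and, analogously, the Ricci spinor $\xi^{aA}\Phi_{ab}$ with ${}^\mfC_\xi\Pi_0^1(C)$. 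The principal obstacle is the clean separation in $(\ddagger)$ of the Ricci and Weyl conditions: these pieces sit in distinct sub-irreducibles of the ambient $\prb$-module, but teasing them apart explicitly requires a careful combination of Weyl tracefreeness, the Clifford identities of Lemma \ref{lem-technical}, and the $\prb$-decomposition tables of Appendix \ref{sec-rep-theory} --- with special care in dimension $m = 3$ due to the isomorphism $\mfS^+ \cong (\mfS^-)^*$.
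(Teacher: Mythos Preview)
Your overall strategy is sound and largely parallels the paper's, but the step separating the Ricci condition \eqref{eq-recurrent_spinor_Ricci} from the Weyl condition \eqref{eq-recurrent_spinor_Weyl3} via your identity $(\ddagger)$ has a genuine gap.

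Your derivation of \eqref{eq-recurrent_spinor_Riemman} by applying the Ricci identity to $\xi^{cC}$ and contracting with $\xi_c{}^E$ is valid (purity kills the unwanted terms and pair symmetry finishes it). The paper does this more directly: the master identity says that for each fixed $(c,d)$ the $2$-form $R_{abcd}$ lies in the stabiliser $\g^0$ of $\langle\xi^{A'}\rangle$, which by definition is \eqref{eq-recurrent_spinor_Riemman}. Your route works but is a detour.

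The real problem is your separation of $\psi^{AB}=0$ from $(\ddagger)$. Tracing with $\eta^d{}_A$ as you propose does not isolate $\psi$: using $\xi^{aA}\eta^d{}_A=\tfrac12 g^{ad}-E^{ad}$ and Weyl tracefreeness, the left-hand side becomes $-E^{ad}\xi^{bB}\xi^{cC}C_{abcd}$, but expanding $E^{ad}=-\xi^{[aF}\eta^{d]}{}_F$ simply reproduces the original left-hand side contracted with $\eta$ --- the relation is tautological. Nor does a pure symmetry argument work: both sides of $(\ddagger)$ carry the same Young-tableau symmetries in $A,B,C$ (antisymmetry in $AB$ together with $T^{[ABC]}{}_d=0$, the latter following from the algebraic Bianchi identity on the left and from $\psi^{(CA)}=\psi^{CA}$ on the right), so irreducibility alone cannot force either side to vanish.

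The paper avoids this entanglement entirely. It contracts \eqref{eq-recurrent_spinor_Riemman} (equivalently the master identity) with a single $\gamma$-matrix: the triple-$\gamma$ term vanishes by the first Bianchi identity $R_{a[bcd]}=0$, leaving $-2R_{ac}\xi^{cE}=f_{ab}\xi^{bE}$ with $f_{ab}=-8\nabla_{[a}\alpha_{b]}$. A further contraction with $\xi^{aA}$ then yields $-2\psi^{AE}=f_{ab}\xi^{aA}\xi^{bE}$; the left-hand side is \emph{symmetric} in $AE$ (since $\Phi_{ab}$ is) while the right-hand side is \emph{antisymmetric} (since $f_{ab}$ is), so both vanish and \eqref{eq-recurrent_spinor_Ricci} follows immediately. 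With $\psi=0$ in hand, \eqref{eq-recurrent_spinor_Weyl3} drops out of your $(\ddagger)$ or, as the paper does it, out of the Riemann decomposition substituted into \eqref{eq-recurrent_spinor_Riemman}. The remaining conditions \eqref{eq-recurrent_spinor_Weyl3>} and the two equivalences are then read off by inserting the resulting explicit expressions for $\xi^{aA}C_{a[bc]d}\xi^{dD}$, $\xi^{aeC'}C_{aedb}\xi^{dD}$ and $\xi^{aeC'}C_{aedf}\xi^{dfF'}$ into the definitions of ${}^\mfC_\xi\Pi_0^0$, ${}^\mfC_\xi\Pi_0^1$, ${}^\mfC_\xi\Pi_0^2$ --- no further Ricci identity on $\xi^{cdC'}$ is needed.
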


\begin{proof}
Taking a covariant derivative of equation \eqref{eq-recurrent_spinor_basic} and commuting the derivatives yield
\begin{align*}
 - \frac{1}{4} R \ind{_{abcd}} \gamma \ind{^{cd}_{B'}^{A'}} \xi \ind*{^{B'}} & = 2 \, ( \nabla \ind{_{\lb{a}}} \alpha \ind{_{\rb{b}}} ) \xi \ind*{^{A'}} \, ,
\end{align*}
which is equivalent to equation \eqref{eq-recurrent_spinor_Riemman}. Contracting equation \eqref{eq-recurrent_spinor_Riemman} with $\gamma \ind{^{ab}_B^C}$ yields the condition \eqref{eq-recurrent_spinor_Ricci} on the Ricci tensor. Conditions \eqref{eq-recurrent_spinor_Weyl3} and \eqref{eq-recurrent_spinor_Weyl3>} are obtained from the decomposition \eqref{eq-Riem_decomposition}. We find
\begin{align*}
 \xi \ind*{^a^A} C \ind{_{a[bc]d}} \xi \ind*{^d^D} & = \frac{2}{n-2} \, \xi \ind*{_{[b}^{[A}} \Phi \ind{_{c]d}} \xi \ind*{^d^{D]}} + \frac{1}{n(n-1)} \, R \, \xi \ind*{_{[b}^A} \xi \ind*{_{c]}^D}\, , \\
 \xi \ind*{^{ae}^{C'}} C \ind{_{aedb}} \xi \ind*{^d^D} & = 2 \, \frac{n-4}{n-2} \, \xi^{C'} \Phi \ind{_{bd}} \xi \ind*{^d^{D}} + 2 \,  \frac{n-2}{n(n-1)} \, R \, \xi \ind*{_b^D} \xi^{C'} \, , \\
 \xi \ind*{^{ae}^{C'}} C \ind{_{aedf}} \xi \ind*{^{df}^{F'}} & = - 2 \, \frac{n-2}{n-1} \, R \, \xi^{C'} \xi^{F'} \, ,
\end{align*}
and the remaining statements follow immediately from the formulae for ${}^\mfC _\xi \Pi_0^2$, ${}^\mfC _\xi \Pi_0^1$, ${}^\mfC _\xi \Pi_0^0$ of appendix \ref{sec-projection}.
\end{proof}

\begin{rem}
 The purity condition is crucial in deducing conditions \eqref{eq-recurrent_spinor_Weyl3} and \eqref{eq-recurrent_spinor_Weyl3>} on the Weyl tensor. A study of pseudo-Riemannian manifolds admitting more general recurrent spinors was carried out in \cite{Galaev2013}.
\end{rem}

\subsection{Spinorial differential equations}\label{sec-spin-diff-eq}
So far we have only considered spinorial differential equations on \emph{projective} pure spinor fields, i.e.\ differential equations that are invariant under rescalings of $\xi^{A'}$. In this section, we study spinorial differential equations on pure spinors of fixed scales emphasing their relations to the intrinsic torsion of their associated $P$-structures.

\subsubsection{Scale-dependent geodetic spinors}
A scale-dependent variation of the geodetic spinor equation \eqref{eq-foliating_spinor} is given by
\begin{align}\label{eq-strongly_foliating}
 \xi \ind*{^a ^A} \nabla_a \xi^{B'} & = 0 \, ,
\end{align}
on a holomorphic pure spinor field $\xi^{A'}$. Since $\hat{\xi} \ind{^a^B} \hat{\nabla} \ind{_a} \left( \Omega^{-1} \xi \ind*{^{A'}} \right) = \Omega^{-2} \left( \xi \ind*{^a^B} \nabla \ind{_a} \xi \ind*{^{A'}} \right)$, equation \eqref{eq-strongly_foliating} is clearly conformally invariant if and only if the spinor field $\xi \ind*{^{A'}}$ has conformal weight $-1$. Accordingly, the integrability condition for \eqref{eq-strongly_foliating} is expected to be conformally invariant. Indeed, a variation of the proof of Proposition \ref{prop-int_cond_foliating_spinor} with $\alpha_a=0$ leads to
\begin{prop}\label{prop-conf_inv_foliating}
 Let $\xi^{A'}$ be a holomorphic pure spinor field satisfying \eqref{eq-strongly_foliating}. Then ${}^\mfC _\xi \Pi_{-1}^0 (C) = 0$, i.e.\ $C \ind{_{abcd}} \xi \ind*{^a^A} \xi \ind*{^b^B} \xi \ind*{^{cd}^{C'}} = 0$.
\end{prop}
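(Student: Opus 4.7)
The plan is to adapt the proof of Proposition \ref{prop-int_cond_foliating_spinor} to the special case in which the spinor $\alpha^A$ appearing in the decomposition $\xi \ind{^a^A} \nabla \ind{_a} \xi \ind{^{B'}} = \alpha^A \xi \ind{^{B'}}$ is identically zero. First, I will differentiate the hypothesis in the direction $\xi \ind{^b^B}$. By the product rule the term with the derivative on $\xi \ind{^a^A}$ equals $\gamma \ind{^a_{D'}^A} ( \xi \ind{^b^B} \nabla \ind{_b} \xi \ind{^{D'}} )$, which vanishes by the hypothesis itself (applied with $B$ in place of $A$), leaving
\begin{align*}
 \xi \ind{^a^A} \xi \ind{^b^B} \nabla \ind{_b} \nabla \ind{_a} \xi \ind{^{C'}} = 0 \, .
\end{align*}

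Next, I extract the $[AB]$-antisymmetric part. Thanks to the $R \ind{_{[ab]cd}}$ symmetry of the curvature, this automatically coincides with the $[ab]$-antisymmetric part and hence isolates the commutator of covariant derivatives. Substituting the standard identity $2 \nabla \ind{_{\lb{a}}} \nabla \ind{_{\rb{b}}} \xi \ind{^{C'}} = - \tfrac{1}{4} R \ind{_{abcd}} \gamma \ind{^{cd}_{D'}^{C'}} \xi \ind{^{D'}}$ produces the integrability condition
\begin{align*}
 R \ind{_{abcd}} \xi \ind{^a^A} \xi \ind{^b^B} \xi \ind{^{cd}^{C'}} = 0 \, ,
\end{align*}
which is precisely equation \eqref{eq-Riem-int-cond-foliating} with $\alpha_a = 0$ on the right-hand side.

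The final step is to decompose $R \ind{_{abcd}}$ via \eqref{eq-Riem_decomposition} into its Weyl, tracefree-Ricci and scalar-curvature parts, and to show that the latter two contributions drop out by purity. Using the skew-symmetry of $\xi \ind{^{cd}^{C'}}$ in $c,d$, the tracefree-Ricci and scalar contributions reduce (up to constants) to the antisymmetrisations $\Phi \ind{_{ca}} \xi \ind{^a^{\lb{A}}} \xi \ind{_d^{\rb{B}}} \xi \ind{^{cd}^{C'}}$ and $\xi \ind{_c^A} \xi \ind{_d^B} \xi \ind{^{cd}^{C'}}$ respectively. Expanding $\xi \ind{^{cd}^{C'}} = \xi \ind{^{D'}} \gamma \ind{^{cd}_{D'}^{C'}}$ and rearranging the resulting triple-$\xi$ products by means of Cartan's purity conditions \eqref{eq-purity_cond_Cartan} and the Clifford identities in Lemma \ref{lem-technical} forces both expressions to vanish, leaving the Weyl contribution alone, i.e. ${}^\mfC _\xi \Pi_{-1}^0 (C) = C \ind{_{abcd}} \xi \ind{^a^A} \xi \ind{^b^B} \xi \ind{^{cd}^{C'}} = 0$.

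This last reduction is the main technical obstacle. It plays the role that the purity identity $\xi \ind{^a^A} \xi \ind{_a^B} = 0$ plays in the proof of Proposition \ref{prop-int_cond_foliating_spinor}, where every pairwise metric contraction of $\xi$-factors collapsed at once; here one of the two $g$-contractions is replaced by a $\gamma \ind{^{cd}}$-contraction, so the Clifford manipulation is slightly more delicate, but still a routine consequence of purity.
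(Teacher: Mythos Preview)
Your proof is correct and follows essentially the same route as the paper, which simply says the result is ``a variation of the proof of Proposition~\ref{prop-int_cond_foliating_spinor} with $\alpha_a=0$'': setting $\alpha^A=0$ in \eqref{eq-Riem-int-cond-foliating} yields $R_{abcd}\,\xi^{aA}\xi^{bB}\xi^{cdC'}=0$, and one then passes to the Weyl tensor by purity. Your write-up is actually more explicit than the paper about the final reduction; the key identities you need are $\xi_d^{\,B}\xi^{cdC'}=-\xi^{cB}\xi^{C'}$ (from $\xi^{cdC'}=\xi^{cE}\gamma^{d}{}_{E}{}^{C'}+g^{cd}\xi^{C'}$ together with the purity condition \eqref{eq-purity_cond}) and the symmetry of $\Phi_{ab}$, which makes $\Phi_{ca}\xi^{a[A}\xi^{cB]}=0$ --- Lemma~\ref{lem-technical} and Cartan's conditions \eqref{eq-purity_cond_Cartan} are not really needed. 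One small quibble: the reason the $[AB]$-antisymmetrisation picks out the commutator is that $\xi^{a[A}\xi^{bB]}$ is automatically antisymmetric in $a,b$, not the $R_{[ab]cd}$ symmetry of the curvature as you state.
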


\subsubsection{Parallel pure spinors}
The integrability condition for the existence of a parallel spinor $\xi^{A'}$ is clearly that it annihilates the Riemann tensor, i.e.\ $R_{abcd} \gamma \ind{^{cd}_{B'} ^{A'}} \xi^{B'} = 0$. The assumption that $\xi^{A'}$ is pure allows us to derive more information. To prove the next proposition, set $\alpha_a=0$ in the proof of Proposition \ref{prop-int_cond_recurrent_spinor}.
\begin{prop}\label{prop-parallel}
Let $\xi^{A'}$ be a parallel pure spinor on $(\mcM,g)$, i.e.\ $\xi^{A'}$ satisfies
\begin{align}\label{eq-parallel-spinor}
 \nabla_a \xi^{B'} & = 0 \, .
\end{align}
Then
\begin{align*}
R_{abcd} \xi \ind*{^{cd} ^{D'}} & = 0 \, ,  \\
{}^\mfF _\xi \Pi^0_0 (\Phi) & = 0 \, , & & \mbox{ie} & \Phi_{ab} \xi \ind*{^{b}^B} & = 0 \, , \\
 R & = 0 \, , \\
{}^\mfC _\xi \Pi _1^0 (C) & = 0 \, , & &  \mbox{ie} & C_{abcd} \xi \ind*{^{cd} ^{D'}} & = 0 \, . 
\end{align*}
\end{prop}

\subsubsection{Null zero-rest-mass fields}
Conditions weaker than \eqref{eq-parallel-spinor} can be obtained by decomposing the covariant derivative of a spinor field into two irreducible parts under $\Spin(2m,\C)$. The smaller of these is known as the \emph{(Weyl-)Dirac equation}
\begin{align}\label{eq-Weyl-Dirac}
 \gamma \ind{^a _{A'}^B} \nabla_a \xi^{A'} & = 0 \, ,
\end{align}
on a holomorphic spinor field $\xi^{A'}$. It admits a generalisation to irreducible symmetric spinor fields of higher valence, i.e.\ spinor fields\footnote{From a representational theoretic viewpoint, $\phi^{A_1'A_2' \ldots A_k'}$ lies in the $k$-fold Cartan product of $\mfS^+$.} $\phi^{A_1'A_2' \ldots A_k'} = \phi^{(A_1'A_2' \ldots A_k')}$ satisfying $\gamma \ind{^a _{A_1'}^B} \gamma \ind{_a_{A_2'}^C} \phi^{A_1'A_2' \ldots A_k'} = 0$, which is known as the \emph{zero-rest-mass (zrm) field equation} \cite{Hughston1988},
\begin{align}\label{eq-zrm}
 \gamma \ind{^a _{A_1'}^B} \nabla_a \phi^{A_1'A_2' \ldots A_k'} & = 0 \, .
\end{align}
The case $k=2$ corresponds to a closed, and thus coclosed, self-dual $m$-form. Equation \eqref{eq-zrm} is conformally invariant provided that its solutions $\phi^{A_1'A_2' \ldots A_k'}$ are of conformal weight $-m-k+1$. For $k>2$, there is a strong integrability condition on $\phi^{A_1'A_2' \ldots A_k'}$ given by the following lemma.
\begin{lem}
 For $k>2$, let $\phi \ind{^{A_1'A_2' \ldots A_k'}}$ be a solution of the zrm field equation \eqref{eq-zrm} on $(\mcM,g)$. Then
\begin{align*}
 \gamma \ind{^a _{C_1'}^A} \gamma \ind{^b _{C_2'}^B} C \ind{_{abcd}} \gamma \ind{^{cd} _{D'}^{(C_3'}} \phi \ind{^{C_4' \ldots C_k') C_1' C_2' D'}} & = 0 \, .
\end{align*}
\end{lem}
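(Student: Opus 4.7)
\noindent \emph{Proof proposal.}
The plan is to mimic the classical Buchdahl/Fierz--Pauli integrability argument: differentiate the zrm equation a second time, turn the result into a curvature commutator, and then peel off all Ricci and scalar-curvature contributions using the $\gamma$-tracelessness encoded in the Cartan-product condition $\gamma \ind{^a_{A_1'}^B} \gamma \ind{_a_{A_2'}^C} \phi \ind{^{A_1' A_2' \ldots A_k'}} = 0$.

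First I would exploit the total symmetry of $\phi$ to compute the second derivative of \eqref{eq-zrm} in two different orders. Applying $\gamma \ind{^b_{C_2'}^B} \nabla \ind{_b}$ to $\gamma \ind{^a_{C_1'}^A} \nabla \ind{_a} \phi \ind{^{C_1' C_2' C_3' \ldots C_k'}} = 0$ gives
\begin{align*}
\gamma \ind{^a_{C_1'}^A} \gamma \ind{^b_{C_2'}^B} \nabla \ind{_b} \nabla \ind{_a} \phi \ind{^{C_1' C_2' C_3' \ldots C_k'}} & = 0 \, ,
\end{align*}
and doing the same with the roles of $(a, C_1')$ and $(b, C_2')$ exchanged, which is legitimate because $\phi$ is symmetric, gives the same identity with the order of covariant derivatives reversed. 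Subtracting yields the commutator identity $\gamma \ind{^a_{C_1'}^A} \gamma \ind{^b_{C_2'}^B} [\nabla \ind{_a} , \nabla \ind{_b}] \phi \ind{^{C_1' C_2' C_3' \ldots C_k'}} = 0$.

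Next I would apply the standard curvature formula $[\nabla \ind{_a} , \nabla \ind{_b}] \phi \ind{^{C_1' \ldots C_k'}} = -\tfrac{1}{4} R \ind{_{abcd}} \sum_{i=1}^k \gamma \ind{^{cd}_{D'}^{C_i'}} \phi \ind{^{C_1' \ldots D' \ldots C_k'}}$ (with $D'$ in the $i$-th slot). The terms with $i=1$ or $i=2$ involve the double Clifford product $\gamma \ind{^a_{C_1'}^A} \gamma \ind{^{cd}_{D'}^{C_1'}}$, which by the reduced Clifford property \eqref{eq-red_Clifford_property} (or Lemma \ref{lem-technical}) decomposes as $\gamma \ind{^{acd}_{D'}^A}$ plus a trace part in $g^{a[c} \gamma \ind{^{d]}}$. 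The totally antisymmetric piece $\gamma \ind{^{acd}}$ is annihilated by $R_{abcd}$ via the first Bianchi identity $R \ind{_{a[bcd]}} = 0$, so only the trace pieces survive from the $i=1,2$ terms, contributing Ricci contractions. Using the total symmetry of $\phi$, the remaining $k-2$ terms (for $i \geq 3$) combine into a single symmetrised expression of the form advertised in the lemma, but with $R_{abcd}$ in place of $C_{abcd}$.

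Finally, I would substitute the decomposition \eqref{eq-Riem_decomposition} of $R \ind{_{abcd}}$ into Weyl, trace-free Ricci and scalar-curvature parts, and argue that the Ricci and scalar contributions, together with the leftover trace contributions generated by the $i=1,2$ terms above, all collapse to contractions of the shape $\gamma \ind{^a_{C_1'}^A} \gamma \ind{_a_{C_2'}^B} \phi \ind{^{C_1' C_2' \ldots}}$ (after suitable rearrangement using Clifford identities and the symmetry of $\phi$), and hence vanish by the $\gamma$-traceless condition that defines the Cartan product. What remains is precisely the Weyl-tensor contraction stated in the lemma. The main obstacle I anticipate is the bookkeeping in this final step: one must track carefully how the various trace terms produced by the $i=1,2$ contributions conspire with the Ricci/scalar pieces of the $i \geq 3$ contributions so that every non-Weyl piece is of $\gamma$-trace type; this is essentially the higher-dimensional incarnation of the Buchdahl consistency calculation, and the Clifford identities of Lemma \ref{lem-technical} should suffice to make the cancellation manifest.
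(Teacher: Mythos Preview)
Your proposal is correct and follows essentially the same route as the paper: differentiate the zrm equation twice, antisymmetrise to obtain the curvature commutator, split the action of $R_{abcd}$ on $\phi$ into the $i=1,2$ contributions versus the $i\geq 3$ contributions, and reduce the former via Clifford identities and the first Bianchi identity.

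The only point on which the paper is sharper than your outline is the final cancellation. You expect all non-Weyl pieces to die through the $\gamma$-traceless (Cartan-product) condition, and indeed that condition handles the scalar-curvature contributions. But the residual tracefree-Ricci term the paper obtains,
\[
\Phi_{bc}\,\gamma \ind{^b_{C_2'}^{[A}}\gamma \ind{^c_{D'}^{B]}}\,\phi \ind{^{C_2'\ldots C_k'D'}} ,
\]
vanishes by a one-line symmetry argument rather than by $\gamma$-tracelessness: swapping $b\leftrightarrow c$ (using $\Phi_{bc}=\Phi_{(bc)}$) together with $C_2'\leftrightarrow D'$ (using the total symmetry of $\phi$) shows the expression is \emph{symmetric} in $A,B$, so its $[AB]$ part is zero. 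This observation replaces the ``bookkeeping obstacle'' you anticipate with a triviality, and is the only substantive simplification relative to your plan.
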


\begin{proof}
We compute
\begin{align*}
 0 & = 2 \, \gamma \ind{^a _{C_1'}^{\lb{A}}} \gamma \ind{^b _{C_2'}^{\rb{B}}} \nabla_a \nabla_b \phi^{C_1'C_2' \ldots C_k'} \\
   & = - \frac{1}{2} \gamma \ind{^a _{C_1'}^A} \gamma \ind{^b _{C_2'}^B} R \ind{_{abcd}} \gamma \ind{^{cd} _{D'}^{C_1'}} \phi \ind{^{C_2' \ldots C_k'D'}} - \frac{k-2}{4} \gamma \ind{^a _{C_1'}^A} \gamma \ind{^b _{C_2'}^B} R \ind{_{abcd}} \gamma \ind{^{cd} _{D'}^{(C_3'}} \phi \ind{^{C_4' \ldots C_k') C_1' C_2' D'}} \\
   & = \Phi \ind{_{bc}} \gamma \ind{^b _{C_2'}^{\lb{A}}} \gamma \ind{^c _{D'}^{\rb{B}}} \phi \ind{^{C_2' \ldots C_k'D'}} - \frac{k-2}{4} \gamma \ind{^a _{C_1'}^A} \gamma \ind{^b _{C_2'}^B} C \ind{_{abcd}} \gamma \ind{^{cd} _{D'}^{(C_3'}} \phi \ind{^{C_4' \ldots C_k') C_1' C_2' D'}} \, .
\end{align*}
The first term must vanish by symmetry consideration, which concludes the proof.
\end{proof}

A holomorphic irreducible symmetric spinor as above is said to be \emph{null} if it takes the form
\begin{align*}
 \phi^{A_1' A_2' \ldots A_k'} & = \ee^\psi \xi^{A_1'} \xi^{A_2'} \ldots \xi^{A_k'} \, ,
\end{align*}
for some holomorphic function $\psi$ and holomorphic pure spinor field $\xi^{A'}$. In this case, the integrability condition for the existence of a solution of equation \eqref{eq-zrm} is given by the following
\begin{cor}
 For $k>2$, suppose that $\phi \ind{^{A_1'A_2' \ldots A_k'}} := \ee^\psi \xi^{A_1'} \xi^{A_2'} \ldots \xi^{A_k'} $ is a solution of the zrm field equation \eqref{eq-zrm} on $(\mcM,g)$. Then
 \begin{align}\label{eq-null-zrm-int-cond}
 {}^\mfC _\xi \Pi_{-1}^0 (C) & = 0 \, , & &  \mbox{ie} & \xi \ind*{^a ^A} \xi \ind*{^b ^B} C \ind{_{abcd}} \xi \ind*{^{cd} ^{D'}} & = 0 \, .
 \end{align}
\end{cor}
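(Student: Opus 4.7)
The plan is to substitute the ansatz $\phi \ind{^{A_1'\ldots A_k'}} = \ee^\psi \, \xi \ind{^{A_1'}} \cdots \xi \ind{^{A_k'}}$ into the integrability condition supplied by the preceding Lemma, and then to extract ${}^\mfC _\xi \Pi_{-1}^0 (C) = 0$ from the resulting identity.

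First I would simplify the substitution. Since $\phi$ is already a symmetric tensor product of copies of $\xi \ind{^{A'}}$, the outer symmetrisation over $(C_3', \ldots, C_k')$ in the Lemma is automatic once $\phi$ is inserted; dividing out the nowhere-vanishing factor $\ee^\psi$, each primed contraction against a $\gamma$-matrix collapses via the shorthands $\xi \ind{^{C_1'}} \gamma \ind{^a _{C_1'} ^A} = \xi \ind{^a^A}$ and $\xi \ind{^{D'}} \gamma \ind{^{cd} _{D'} ^{C_3'}} = \xi \ind{^{cd} ^{C_3'}}$. The Lemma therefore reduces to
\begin{equation*}
 \xi \ind{^a^A} \xi \ind{^b^B} C \ind{_{abcd}} \xi \ind{^{cd}^{(C_3'}} \xi \ind{^{C_4'}} \cdots \xi \ind{^{C_k')}} = 0 \, .
\end{equation*}

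Next I would strip off the surplus $\xi$-factors. Writing $Z \ind{^{D'}} := \xi \ind{^a^A} \xi \ind{^b^B} C \ind{_{abcd}} \xi \ind{^{cd} ^{D'}}$, with $A$ and $B$ treated as spectator indices, the displayed equation reads $Z \ind{^{(C_3'}} \xi \ind{^{C_4'}} \cdots \xi \ind{^{C_k')}} = 0$. Pairing both sides with $\lambda \ind{_{C_3'}} \cdots \lambda \ind{_{C_k'}}$ for an arbitrary $\lambda \ind{_{A'}} \in (\mfS^+)^*$ produces the polynomial identity $(Z \ind{^{C'}} \lambda \ind{_{C'}})\, (\xi \ind{^{A'}} \lambda \ind{_{A'}})^{k-3} = 0$ in $\lambda$. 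Because $\xi \ind{^{A'}}$ is non-zero pointwise, $\xi \ind{^{A'}} \lambda \ind{_{A'}}$ is a non-trivial linear polynomial in $\lambda$, so $Z \ind{^{C'}} \lambda \ind{_{C'}}$ must vanish for all $\lambda$, whence $Z \ind{^{C'}} = 0$, which is the desired claim.

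I expect the principal obstacle to be the bookkeeping of the symmetrisation over the primed indices during the substitution step, to ensure that all $\xi$-factors really do line up with the $\gamma$-matrices as claimed. No further curvature manipulations are needed, since the Lemma has already absorbed the work of commuting covariant derivatives and discarding the Ricci contribution on symmetry grounds; the remaining step is purely algebraic, amounting to the injectivity of multiplication by $\xi^{\otimes(k-3)}$ inside the symmetric power $\odot^{k-2} \mfS^+$, which holds whenever $\xi \ind{^{A'}}$ is non-zero.
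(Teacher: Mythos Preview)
Your proof is correct and follows the same line as the paper, which simply states the result as a corollary of the preceding Lemma without further argument. You have in fact supplied more detail than the paper does: the substitution step is exactly the intended one, and your polynomial argument cleanly justifies stripping the extra $\xi$-factors from $Z \ind{^{(C_3'}} \xi \ind{^{C_4'}} \cdots \xi \ind{^{C_k')}} = 0$, a point the paper leaves implicit.
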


The relation between pure solutions to the zrm field equation and geodetic spinors was first established by Robinson \cite{Robinson1961} in four dimensions in his study of electromagnetism. It was later generalised by Hughston and Mason \cite{Hughston1988} to even dimensions.

\begin{thm}[\cites{Robinson1961,Hughston1988}]\label{thm-Robinson}
 Let $\xi^{A'}$ be a holomorphic pure spinor field on $(\mcM,g)$ with almost null structure $\mcN_\xi$.
 
 Let $\psi$ be a holomorphic function and suppose that $\phi^{A_1' \ldots A_k'} := \ee^\psi \xi^{A_1'} \xi^{A_2'} \ldots \xi^{A_k'}$ satisfies the zrm field equation \eqref{eq-zrm}. Then $\xi^{A'}$ locally satisfies \eqref{eq-foliating_spinor}, i.e.\ $\xi^{A'}$ is geodetic, i.e.\ $\mcN_\xi$ is totally geodetic.

Conversely, suppose that $\xi^{A'}$ is geodetic, i.e.\ $\mcN_\xi$ is totally geodetic. Then locally there exists a holomorphic function $\psi$ such that $\phi^{A'B'} := \ee^\psi \xi^{A'} \xi^{B'}$ satisfies the zrm field equation \eqref{eq-zrm}. Suppose further that $\xi^{A'}$ satisfies \eqref{eq-null-zrm-int-cond}. Then locally, for every $k>2$, there exists a holomorphic function $\psi$ such that $\phi^{A_1' \ldots A_k'} :=\ee^\psi \xi^{A_1'} \xi^{A_2'} \ldots \xi^{A_k'}$ satisfies the zrm field equation \eqref{eq-zrm}. In both cases, there is the freedom of adding to $\psi$ a holomorphic function constant along the leaves of $\mcN_\xi$.
\end{thm}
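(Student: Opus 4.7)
The forward direction will rest on expanding
\begin{align*}
0 = \gamma \ind{^a _{A_1'} ^B} \nabla_a \left( \ee^\psi \xi^{A_1'} \xi^{A_2'} \cdots \xi^{A_k'} \right)
\end{align*}
via the Leibniz rule and isolating the contributions according to which factor of $\xi$ carries the derivative. The term in which $\nabla_a$ falls on $\xi^{A_1'}$ contributes $\gamma \ind{^a _{A'}^B} \nabla_a \xi^{A'}$ times $\xi^{A_2'} \cdots \xi^{A_k'}$, while in each of the remaining $k-1$ terms the contraction $\gamma \ind{^a _{A_1'}^B} \xi^{A_1'} = \xi \ind{^a^B}$ appears. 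After using the symmetry of $\phi$ in $A_2', \ldots, A_k'$, the zrm equation becomes
\begin{align*}
\beta^B \xi^{A_2'} \cdots \xi^{A_k'} + (k-1)\, \xi \ind{^a^B} \xi^{(A_2'} \cdots \xi^{A_{k-1}'} \nabla_a \xi^{A_k')} = 0 \,,
\end{align*}
with $\beta^B := (\nabla_a \psi)\, \xi \ind{^a^B} + \gamma \ind{^a _{A'}^B} \nabla_a \xi^{A'}$. I will then decompose $\xi \ind{^a^B} \nabla_a \xi^{A'} = \kappa^B \xi^{A'} + \tilde{\lambda}^{BA'}$ with $\tilde{\lambda}^{BA'}$ transverse to $\xi^{A'}$, and observe that the symmetrised Cartan product $\tilde{\lambda}^{B(A_2'} \xi^{A_3'} \cdots \xi^{A_k')}$ is linearly independent of $\xi^{A_2'}\cdots\xi^{A_k'}$, forcing $\tilde{\lambda}^{BA'} = 0$; this is precisely \eqref{eq-foliating_spinor}.

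For the converse, given that $\xi^{A'}$ is foliating so that $\xi \ind{^a^B} \nabla_a \xi^{A'} = \kappa^B \xi^{A'}$, the identity above shows that the zrm equation collapses to the single first-order PDE
\begin{align*}
 \xi \ind{^a^B} \nabla_a \psi = -\,\gamma \ind{^a _{A'}^B} \nabla_a \xi^{A'} - (k-1)\,\kappa^B \,.
\end{align*}
Since, by purity, the vectors $\{ \xi \ind{^a^B} \}_B$ span the integrable distribution $\mcN$, this prescribes the restriction of $\dd \psi$ to each leaf of the foliation, and by a Frobenius argument a local solution $\psi$ will exist, unique up to the addition of a function constant along the leaves, provided the right-hand side is closed on each leaf.

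Testing this closedness will amount to commuting two $\nabla_a$-derivatives along $\mcN$ and applying the Ricci identity to $\xi^{A'}$. For $k=2$ the foliating condition alone will suffice. For $k>2$, the extra term $-(k-2)\kappa^B$ will contribute an additional requirement that $\kappa^B$ itself be leaf-exact; combining the Ricci identity with the Bianchi identity \eqref{eq-Bianchi_id} and the curvature decomposition \eqref{eq-Riem_decomposition}, and invoking the purity conditions to kill the Ricci and scalar contributions, I will show that this requirement reduces exactly to the Weyl condition \eqref{eq-null-zrm-int-cond}. The hardest step will be this last reduction: the careful spinorial bookkeeping, relying on Lemma \ref{lem-technical}, needed to isolate the Weyl-tensor obstruction from the full Riemann curvature.
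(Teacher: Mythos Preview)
The paper does not give a proof of this theorem; it is simply attributed to Robinson (1961) and Hughston \& Mason (1988) and stated without argument. So there is no ``paper's proof'' to compare against, and your outline is essentially the standard Hughston--Mason argument.

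Your forward direction is correct. The point that $\tilde\lambda^{B(A_2'}\xi^{A_3'}\cdots\xi^{A_k')}$ has no component along $\xi^{A_2'}\cdots\xi^{A_k'}$ (contract all primed indices with a dual spinor $\eta_{A'}$ to see this) is exactly the right mechanism, and yields the foliating condition.

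Your converse is also structurally correct, but two technical remarks are worth making. First, you should check that the right-hand side $-\nabla_a\xi^{aB}-(k-1)\kappa^B$ actually lies in $\mfS^{\frac{m-2}{4}}=\im\xi_a^B$, so that the leafwise PDE is not already pointwise overdetermined; this follows from the foliating hypothesis via the decomposition \eqref{eq-connection1form} once $\Gamma^{ABC}=0$. Second, the identity you will need is not the differential Bianchi identity \eqref{eq-Bianchi_id} but the \emph{algebraic} one $R_{[abc]d}=0$: when you commute the two leaf derivatives, a term $R_{bacd}\,\xi^{bB}\xi^{acdA}$ appears and vanishes precisely by $R_{b[acd]}=0$, leaving only a symmetric Ricci contribution that drops out upon antisymmetrisation in $A,B$. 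This is why $k=2$ is automatic. For $k>2$ the surviving obstruction is $(k-2)\,\xi^{a[A}\nabla_a\kappa^{B]}$, and the identity you want is exactly \eqref{eq-Riem-int-cond-foliating} from the proof of Proposition \ref{prop-int_cond_foliating_spinor}, which converts this into $R_{abcd}\xi^{aA}\xi^{bB}\xi^{cdC'}$; the purity conditions then reduce the Riemann contribution to the Weyl condition \eqref{eq-null-zrm-int-cond}.
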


\subsubsection{Conformal Killing spinors}
The larger irreducible part of the covariant derivative of a spinor field leads to the \emph{twistor equation}
\begin{align}
 \nabla_a \xi^{A'} + \frac{1}{\sqrt{2}} \gamma \ind{_a _B^{A'}} \zeta^B & = 0 \, , \label{eq-twistor-spinor}
\end{align}
on a holomorphic spinor field $\xi^{A'}$ -- here, \eqref{eq-twistor-spinor} determines $\zeta^B = \frac{\sqrt{2}}{n} \gamma \ind{^a _{A'}^B} \nabla_a \xi^{A'}$. We shall refer to a solution of \eqref{eq-twistor-spinor} as a \emph{conformal Killing spinor} or \emph{twistor spinor}. It is well-known that the twistor equation is overdetermined, and for this reason, it is often more convenient to consider its prolongation \cites{Penrose1986,Baum2010}
\begin{align}
 \nabla \ind{_a} \zeta \ind*{^B} + \frac{1}{\sqrt{2}} \Rho \ind{_{ab}} \gamma \ind{^b_{A'}^B} \xi \ind*{^{A'}} & = 0 \, , \label{eq-twistor-spinor2}
\end{align}
where $\Rho_{ab} := \frac{1}{2-n} \Phi_{ab} - R \frac{1}{2n(n-1)} g_{ab}$ is the \emph{Rho} or \emph{Schouten} tensor (see appendix \ref{sec-conformal}).
We immediately deduce
\begin{align}\label{eq-Dirac-twistor}
 \nabla_b \zeta^{bA'} & = - \frac{1}{2\sqrt{2}(n-1)} R \xi ^{A'} \, .
\end{align}
Equations \eqref{eq-twistor-spinor} and \eqref{eq-twistor-spinor2} are conformally invariant, provided that under a conformal change of metric $\hat{g}_{ab} = \Omega^2 g_{ab}$ for some non-vanishing holomorphic function $\Omega$, $\xi^{A'}$ and $\zeta^A$ transform as
\begin{align}\label{eq-twistor-conf_transf}
 \xi^{A'} &\mapsto \hat{\xi}^{A'} = \xi^{A'} \, , & \zeta^A &\mapsto \hat{\zeta}^A = \Omega^{-1} \left( \zeta^A + \frac{1}{\sqrt{2}} \Upsilon_a \xi^{aA} \right) \, .
\end{align}
The equivalence class of pairs of spinors $(\xi^{A'} , \zeta^A)\sim(\hat{\xi}^{A'} , \hat{\zeta}^A)$ is a section of what is known as the \emph{local twistor bundle} \cite{Penrose1986} or \emph{spin tractor bundle} \cite{Hammerl2011}. Such a pair of spinors will be referred to as a \emph{tractor spinor}. This bundle arises from a chiral spinor representation for $\Spin(2m+2,\C)$.

We shall mostly be concerned with the case where the conformal Killing spinor $\xi^{A'}$ is pure. We note that the purity of $\xi^{A'}$ does not in general entail the purity of $\zeta^A$ in dimensions greater than six. For future reference, we record the integrability condition for a pure conformal Killing spinor.
\begin{prop}\label{prop-int_cond_twistor-spinor}
 Let $\xi^{A'}$ be a pure conformal Killing spinor on $(\mcM,g)$ with $\zeta^B = \frac{\sqrt{2}}{n} \gamma \ind{^a _{A'}^B} \nabla_a \xi^{A'}$. Then
\begin{align}
 C_{abcd} \xi \ind*{^{cd}^{B'}} & = 0 \, , & & \mbox{ie} & {}^\mfC _\xi \Pi_1^0 (C) & = 0 \, , \label{eq-int-CKsp1} \\
 C \ind{_{a b c d}} \zeta \ind*{^{c d} ^C} - 2 \sqrt{2} A \ind{_{cab}} \xi \ind*{^c ^C} & = 0 \, , \label{eq-int-CKsp2} \\
  A \ind{_{abc}} \xi \ind*{^a ^A} \xi \ind*{^{bc}^{B'}} & = 0 \, , & & \mbox{ie} & {}^\mfA _\xi \Pi_{-\frac{1}{2}}^0 (A) & = 0 \, . \label{eq-int-CKsp3}
\end{align}
Here $A_{abc} := 2 \, \nabla_{[b} \Rho_{c]a}$ is the \emph{Cotton-York tensor} (see appendix \ref{sec-conformal}).
\end{prop}

\begin{proof}
The LHS of \eqref{eq-int-CKsp1} and \eqref{eq-int-CKsp2} are the usual integrability conditions for a (not necessarily pure) conformal Killing spinor (see eg \cite{Baum2010}), and the RHS of \eqref{eq-int-CKsp1} is simply the interpretation in the case when $\xi^{A'}$ is pure. Condition \eqref{eq-int-CKsp3} follows from \eqref{eq-int-CKsp1} and \eqref{eq-int-CKsp2}.
\end{proof}

In four dimensions, a conformal Killing spinor is always geodetic \cites{Penrose1967,Penrose1986}, but it is not so in general in higher dimensions. We can nevertheless give necessary and sufficient conditions for this to happen.
\begin{prop}\label{prop-foliating_twistor_spinor}
Let $\xi^{A'}$ be a pure conformal Killing spinor on $(\mcM,g)$ with almost null structure $\mcN_\xi$. Set $\zeta^B = \frac{\sqrt{2}}{n} \gamma \ind{^a _{A'}^B} \nabla_a \xi^{A'}$. Then $\xi^{A'}$ satisfies
\begin{align}\tag{\ref{eq-sym-foliating}}
 ( \xi \ind*{^a^{\lp{A}}} \nabla \ind{_a} \xi \ind*{^b^{\rp{B}}} ) \xi \ind*{_b^C} & = 0 \, .
\end{align}

 Further, $\xi^{A'}$ is geodetic, i.e.\ $\mcN_\xi$ is totally geodetic, if and only if
\begin{align}\label{eq-pure_pair}
 \zeta \ind*{^a ^{A'}} \zeta \ind*{_a ^{B'}} & = 0 \, , & \xi \ind*{^a ^B} \zeta \ind*{_a ^{A'}} & = - 2 \, \zeta \ind*{^B} \xi \ind*{^{A'}} \, ,
\end{align}
i.e.\ $\zeta^A$, if non-zero, is pure, and its almost null structure $\mcN_\zeta$ intersects $\mcN_\xi$ in a totally null $(m-1)$-plane at any point.

Suppose that $\xi^{A'}$ is geodetic so that $\zeta^A$ satisfies conditions \eqref{eq-pure_pair}. Then $\zeta^A$ satisfies
\begin{align}\label{eq-twistor2-almost-foliating}
 \left( \zeta \ind*{^{a[A'}} \nabla_a \zeta \ind*{^{bB'}} \right) \zeta \ind*{_b^{C']}} & = 0 \, .
\end{align}
\end{prop}

\begin{proof}
From the twistor equation \eqref{eq-twistor-spinor} and assuming $\xi^{A'}$ to be pure, we compute
\begin{align*}
 \left( \xi \ind*{^{aA}} \nabla_a \xi \ind*{^{bB}} \right) \xi \ind*{_b^C} & = - \frac{1}{\sqrt{2}} \xi \ind*{^a^A} \zeta \ind*{_a^{D'}} \gamma \ind{^b_{D'}^B} \xi \ind*{_b^C} = - \frac{1}{\sqrt{2}} \xi \ind*{^a^A} \zeta \ind*{_{ab}^B} \xi \ind*{^b^C} \, .
\end{align*}
This expression is skew in $BC$ and $AB$, which proves the first claim.

To prove the second statement, we consider the contraction of equation \eqref{eq-twistor-spinor} with $\xi \ind*{^a ^B}$, ie
\begin{align}\label{eq-contracted-twistor-spinor-eq}
 \xi \ind*{^a ^B} \nabla_a \xi^{A'} = - \frac{1}{\sqrt{2}} \xi \ind*{^a ^B} \zeta \ind*{_a ^{A'}} \, .
\end{align}
Let us work at a point, and in line with the notation of section \ref{sec-algebra}, set $\mfS^{\frac{m}{4}} := \langle \xi ^{A'} \rangle$, $\mfS^{\frac{m-2}{4}} := \im \xi \ind*{_a^A}$, $\mfS^{\frac{m-4}{4}} := \im \xi \ind*{_{ab}^{A'}}$ and $\mfS^{\frac{m-6}{4}} := \im \xi \ind*{_{abc}^{A'}}$. Generically, both sides of \eqref{eq-contracted-twistor-spinor-eq} lie in $\mfS^{\frac{m-2}{4}} \otimes \mfS^{\frac{m-4}{4}}$, which contains $\mfS^{\frac{m-2}{4}} \otimes \mfS^{\frac{m}{4}}$. In fact, both sides of \eqref{eq-contracted-twistor-spinor-eq} will lie in $\mfS^{\frac{m-2}{4}} \otimes \mfS^{\frac{m}{4}}$ if and only if $\xi \ind*{^a ^A} \nabla_a \xi^{B'} = \beta \ind{^A} \xi \ind*{^{B'}}$ for some $\beta \ind{^A}$ in $\mfS^{\frac{m-2}{4}}$ if and only if $\xi^{A'}$ is geodetic. In sum, the conformal Killing spinor $\xi^{A'}$ is geodetic if and only if
\begin{align}\label{eq-geod-CKsp}
\beta \ind{^A} \xi \ind*{^{B'}} & = - \frac{1}{\sqrt{2}} \xi \ind*{^a ^B} \zeta \ind*{_a ^{A'}} \, .
\end{align}
The `if' part of the statement is immediate already from \eqref{eq-contracted-twistor-spinor-eq}, so we focus on the `only if' part.

We know that generically, $\zeta^A$ lies in $\mfS^{\frac{m-6}{4}}$, which contains $\mfS^{\frac{m-2}{4}}$. One way to see this is to use \eqref{eq-LC-spin-connection} where $\partial_a$ is so chosen as to preserve $\xi^{A'}$. Then
\begin{align*}
\zeta^A = -\frac{1}{2\sqrt{2}n} \left( \Gamma_{abc} \xi^{abc A} + 2 \, \Gamma \ind{^a_{ab}} \xi^{bA} \right) \in \mfS^{\frac{m-6}{4}} \, .
\end{align*}
Since the LHS of \eqref{eq-geod-CKsp} is in $\mfS^{\frac{m-2}{4}} \otimes \mfS^{\frac{m}{4}}$, so must be the RHS. This tells us that $\zeta^A$ must be proportional to $\beta^A$ and lie in $\mfS^{\frac{m-2}{4}}$ too.\footnote{This can also be computed by applying Cartan's criterion \eqref{eq-pure_pair2} of Proposition \ref{prop-pure-spinors-Cartan} to \eqref{eq-geod-CKsp} with $k=m-1$ and $\alpha^{A'} = \xi^{A'}$.}  By Corollary \ref{prop-pure-spinors3}, we conclude that $\zeta^A$ satisfies \eqref{eq-pure_pair} as claimed. The geometric interpretation is given by Proposition \ref{prop-pure-spinors}.

Finally, if $\xi^{A'}$ is geodetic, then $\zeta^A$ satisfies \eqref{eq-pure_pair} by the above argument, and a computation leads to $\left( \nabla_a \zeta \ind*{^b^{B'}} \right) \zeta \ind*{_b^{C'}} = - 2 \cdot \frac{1}{\sqrt{2}} \,  \Rho _{ab} \zeta \ind*{^b^{[B'}} \xi ^{C']}$ from which \eqref{eq-twistor2-almost-foliating} can be deduced.
\end{proof}

\begin{rem}
The statements of Proposition \ref{prop-foliating_twistor_spinor} are conformally invariant. In fact, it is straightforward to check that conditions \eqref{eq-pure_pair} are invariant under the transformation \eqref{eq-twistor-conf_transf}.

Further, the conditions that $\xi^{A'}$ be pure and $\zeta^A$ satisfy \eqref{eq-pure_pair} is equivalent to the corresponding tractor spinor $(\xi^{A'},\zeta^A)$ being a pure section of the local twistor bundle, i.e.\ it is a pure spinor for $\Spin(2m+2,\C)$ as stated in \cite{Hughston1988} -- for a proof, see \cite{Taghavi-Chabert2015}.
\end{rem}

\begin{rem}
Clearly, if $\zeta=0$ in Proposition \ref{prop-foliating_twistor_spinor}, then $\xi$ is parallel and $\mcN_\xi$ is thus integrable. One can show \cite{Hammerl2016} that if $\xi$ is a geodetic pure conformal Killing spinor then there exists a conformal rescaling such that $\xi$ is parallel.
\end{rem}

The next result is a refinement of Proposition \ref{prop-foliating_twistor_spinor} in the case $m=3$. A proof is given in appendix \ref{sec-spin-calculus6}.
\begin{prop}\label{prop-foliating_twistor_spinor6}
 Let $\xi_A$ be a conformal Killing spinor on $(\mcM,g)$ where $\mcM$ has dimension six. Then $\xi_A$ satisfies condition \eqref{eq-proj-twistor6a} (or \eqref{eq-proj-twistor6}), and thus condition \eqref{eq-sym-foliating} (or \eqref{eq-sym-foliating6}).
\end{prop}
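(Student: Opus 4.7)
The plan is to substitute the twistor equation \eqref{eq-twistor-spinor} into the left-hand side of \eqref{eq-proj-twistor6a} and verify direct cancellation term by term. The second assertion, that \eqref{eq-sym-foliating} then follows, is automatic from the $\prb$-module inclusion $\ker {}^\mfW _\xi \Pi_{-\frac{1}{2}}^1 \subset \ker {}^\mfW _\xi \Pi_{-\frac{3}{2}}^1$ corresponding to the solid arrow $\mfW_{-\frac{1}{2}}^1 \to \mfW_{-\frac{3}{2}}^1$ in the Penrose diagram of Proposition \ref{prop-intrinsic_torsion}, and so requires no separate argument.

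In the six-dimensional convention using $\mfS^+ \cong (\mfS^-)^*$, the twistor equation takes the form $\nabla_a \xi_A = -\tfrac{1}{6}\, \gamma \ind{_a_{AB}}\, \zeta^B$ with $\zeta^B$ proportional to $\nabla_a \xi^{aB}$ (recovered by contraction using the standard Clifford trace). Differentiating $\xi^{bB}$ and inserting this identity, one computes each of the four terms on the left-hand side of \eqref{eq-proj-twistor6a}, expanding every two-$\gamma$ product via the $m=3$ case of Lemma \ref{lem-technical} (with $p=1,2$) into its trace-$\delta$ and $\gamma^{ab}$ parts. After these substitutions, all four terms reduce to single-$\gamma$ contractions of $\xi$ and $\zeta$.

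The decisive step is the simplification of the resulting three-spinor-index expressions. Here the special six-dimensional features become essential: automatic purity, $\gamma^{a(AB}\gamma_a{}^{CD)} = 0$, together with the totally skew $\varepsilon^{ABC}$ from the footnote to \eqref{eq-connection1form} (encoding the four-dimensional spinor Hodge duality), forces certain combinations of three skew-symmetrised spinor indices to collapse onto traces. The coefficient $\tfrac{2}{3}$ multiplying $\gamma \ind{_a^{BC}}$ in \eqref{eq-proj-twistor6a} is precisely what is required to absorb this residual trace, playing the role analogous to the coefficient $\tfrac{1}{m-1}$ in the higher-dimensional formula \eqref{eq-proj-twistor}.

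The main obstacle will be the bookkeeping of signs arising from the identification $\mfS^+ \cong (\mfS^-)^*$ and from the Clifford normalisations; once these are consistently fixed, the four terms cancel identically. A cleaner conceptual alternative is representation-theoretic: the twistor equation constrains $(\nabla_a \xi^{bB}) \xi_b{}^C$ to be a $P$-equivariant function of the single datum $\zeta^B$ (together with $\xi_A$), hence to lie in a $P$-submodule transverse to $\mfW_{-\frac{1}{2}}^1$, which is precisely the content of \eqref{eq-proj-twistor6a}.
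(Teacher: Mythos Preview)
Your strategy is correct in outline --- direct substitution of the twistor equation and term-by-term cancellation will work --- but you have chosen a harder route than the paper. You propose to stay in the general $\gamma$-matrix formalism, expanding products via Lemma \ref{lem-technical} and then invoking the six-dimensional identities coming from $\varepsilon^{ABCD}$ at the end. The paper instead passes immediately to the $\SL(4,\C)$ spinor calculus of Appendix \ref{sec-spin-calculus6}, where a vector index becomes a skew pair of spinor indices and the twistor equation reads
\[
\nabla^{AB}\xi_C + \tfrac{2}{3}\,\delta^{[A}_C\,\nabla^{B]E}\xi_E = 0.
\]
In that formalism the whole computation is two lines: one skew-symmetrises with $\xi_{D]}$ and contracts once with $\xi_E$, and the resulting identities are exactly \eqref{eq-proj-twistor6} and \eqref{eq-sym-foliating6}. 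No $\gamma$-expansions or sign-tracking through Lemma \ref{lem-technical} are needed, and the coefficient $\tfrac{2}{3}$ drops out directly from the twistor equation rather than emerging from a trace subtraction.

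What your approach buys is that it stays within the dimension-independent language of the main text, so it makes transparent \emph{why} the result is special to $m=3$ (the extra $\varepsilon^{ABC}$ contraction that appears only there). What the paper's approach buys is brevity: the $\SL(4,\C)$ calculus is tailor-made for this identity, and the ``little algebra'' genuinely is little. Your final representation-theoretic remark is a nice observation but, as stated, is not yet a proof: one would need to check that the image of the twistor operator, viewed inside $\mfW$, really misses $\mfW_{-\frac{1}{2}}^1$, which in the end is the same computation.
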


\begin{exa}\label{exa-3distribution}
It is shown in \cite{Bryant2006} how one can associate to a generic $3$-plane distribution $\mcN$ on a six-dimensional manifold $\mcM$ a conformal structure $[g]$. By generic, we mean that $\mcN$ is maximally non-integrable, i.e.\ $\Gamma(\mcN) + [\Gamma(\mcN),\Gamma(\mcN)] = \Gamma (\Tgt \mcM)$. The authors of \cite{Hammerl2011} later characterised $(\mcM,[g])$ in terms of a tractor spinor $(\xi_A, \zeta^A)$ which  is \emph{generic} in the sense that $(\xi_A, \zeta^A)$ satisfy the non-degeneracy condition $\xi_A \zeta^A \neq 0$, i.e.\ $(\xi_A, \zeta^A)$ is an `impure' tractor spinor for the group $\Spin(4,4)$. In this case, the conformal holonomy of $(\mcM,[g])$ being reduced to (a subgroup of) $\Spin(3,4)$. This example clearly works in the category of complex Riemannian manifold --- see section \ref{sec-real-geom}. Note that in this case, the intrinsic torsion of the $P$-structure defined by $[\xi^{A'}]$ is \emph{not} generic since \eqref{eq-sym-foliating} is satisfied.
\end{exa}

\subsubsection{Relation to the Goldberg-Sachs theorem in higher dimensions}
The \emph{Goldberg-Sachs theorem} \cites{Goldberg2009} is a classical theorem of general relativity, which, in the context of the present paper, can be interpreted in the following terms \cites{Gover2010,Taghavi-Chabert2012}.
\begin{thm}\label{thm-GS4}
Let $(\mcM,g)$ be a four-dimensional non-conformally flat spin complex Riemannian manifold satisfying the Einstein equations $R_{ab} = \lambda g_{ab}$ for some constant $\lambda$. Let $[\xi^{A'}]$ is a holomorphic projective pure spinor, then locally
\begin{align}\label{eq-GS4}
C_{abcd} \xi \ind*{^a^A} \xi \ind*{^b^B} \xi \ind*{^c^C} & = 0 & \Longleftrightarrow & &  \mbox{$[\xi^{A'}]$ is geodetic.}
\end{align}
\end{thm}
One can show that the condition on the Weyl tensor really restricts to its self-dual part -- see appendix \ref{sec-spin-calculus4}. There are other versions of the theorem, all of which are reviewed in \cite{Gover2010}. A conformally invariant version \cites{Kundt1962,Robinson1963,Penrose1986} motivated the author's partial generalisation, which we present in truncated form in the language of pure spinors.
\begin{thm}[\cite{Taghavi-Chabert2012}]\label{thm-GS}
Assume $m\geq2$. Let $[\xi^{A'}]$ be a holomorphic projective pure spinor field on a $2m$-dimensional spin complex Riemannian manifold $(\mcM,g)$ with associated almost null structure $\mcN_\xi$. Suppose that the Weyl tensor and the Cotton-York tensor satisfy the algebraic degeneracy conditions
\begin{align}\label{eq-Weyl_CY_GS}
\begin{aligned}
{}^\mfC _\xi \Pi_{-1}^0 (C) = {}^\mfC _\xi \Pi_{-1}^1 (C) & = 0 \, , \qquad \mbox{ie}  \qquad C_{abcd} \xi \ind*{^a^A} \xi \ind*{^b^B} \xi \ind*{^c^C} = 0 \, , \\
 {}^\mfA _\xi \Pi_{-{\frac{3}{2}}}^0 (A) & = 0 \, , \qquad \mbox{ie} \qquad A_{abc} \xi \ind*{^a^A} \xi \ind*{^b^B} \xi \ind*{^c^C} = 0 \, .
 \end{aligned}
\end{align}
Suppose further that the Weyl tensor is otherwise generic. Then locally, $[\xi^{A'}]$ is geodetic, i.e.\ $\mcN_\xi$ is totally geodetic (or equivalently, integrable).
\end{thm}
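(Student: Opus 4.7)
The plan is to differentiate the algebraic degeneracy of the Weyl tensor, use the once-contracted second Bianchi identity to convert the resulting derivative of $C_{abcd}$ into the Cotton--York tensor $A_{abc}$, and then interpret what remains as a $\prb$-equivariant linear map applied to (the foliating component of) the intrinsic torsion of $\langle \xi^{A'} \rangle$. Genericity of the Weyl tensor will then force this component to vanish.

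\textbf{Step 1 (reformulate the hypotheses tensorially).} From Proposition \ref{prop-main_Weyl} and the explicit form of the maps in appendix \ref{sec-spinor-descript}, the joint vanishing ${}^\mfC _\xi \Pi_{-1}^0 (C) = {}^\mfC _\xi \Pi_{-1}^1 (C) = 0$ is equivalent to the single tensorial condition
\begin{align*}
 C_{abcd}\,\xi^{aA}\xi^{bB}\xi^{cC} = 0 \, ,
\end{align*}
placing $C$ in $\mfC^0$, and similarly ${}^\mfA _\xi \Pi_{-\frac{3}{2}}^0 (A)=0$ is equivalent to $A_{abc}\,\xi^{aA}\xi^{bB}\xi^{cC}=0$.

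\textbf{Step 2 (differentiate and apply Bianchi).} Applying $\nabla^d$ to the Weyl degeneracy and using the once-contracted second Bianchi identity, which in the normalisation of this paper reads $\nabla^d C_{abcd} = (n-3)\,A_{abc}$, I obtain
\begin{align*}
 (n-3)\,A_{abc}\xi^{aA}\xi^{bB}\xi^{cC} + C_{abcd}\bigl[(\nabla^d\xi^{aA})\xi^{bB}\xi^{cC} + \xi^{aA}(\nabla^d\xi^{bB})\xi^{cC} + \xi^{aA}\xi^{bB}(\nabla^d\xi^{cC})\bigr] = 0\, .
\end{align*}
The Cotton--York term vanishes by hypothesis, leaving a purely algebraic relation between $C_{abcd}$ and the covariant derivative of $\xi^{A'}$. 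Using the purity identity $\xi^{a(A}\xi_a^{B)}=0$ together with $C_{abcd}\xi^{aA}\xi^{bB}\xi^{cC}=0$, the $\nabla\xi$ factors may be replaced by their images in the intrinsic-torsion module $\mfW = \mfV\otimes(\g/\prb)$, i.e.\ by $\Gamma_{abc}\xi^{bB}\xi^{cC} = (\nabla_a\xi^{bB})\xi_b^{C}$.

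\textbf{Step 3 (isolate the foliating component).} The obstruction to $\xi^{A'}$ being foliating is precisely the component ${}^\mfW _\xi \Pi_{-\frac{3}{2}}^1(\Gamma)$, that is, the totally symmetric piece $\Gamma^{(ABC)} = (\xi^{aA}\nabla_a\xi^{bB})\xi_b^C$, complemented by the totally skew $\Gamma^{[ABC]}$. Writing the Weyl tensor relative to the splitting \eqref{eq-V-splitting} and using $\xi^{a(A}\xi_a^{B)}=0$, the identity from Step 2 can be rewritten as
\begin{align*}
 \mcK\bigl(\,C\,\bigr)\!\cdot\!\Gamma^{(ABC)} \;+\; \mcK'\bigl(\,C\,\bigr)\!\cdot\!\Gamma^{[ABC]} \;=\; \Lambda(C,\Gamma_{\text{rest}}) \, ,
\end{align*}
where $\mcK, \mcK'$ are $\prb$-equivariant linear maps built from the $\mfC^0$-components of the Weyl tensor, and $\Lambda$ involves only the components of $\Gamma$ of filtration weight $-\frac{1}{2}$, which correspond to projective-invariant information that is independent of (and does not obstruct) the foliating property.

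\textbf{Step 4 (invoke genericity).} The main obstacle is showing that, under the assumption that $C$ is otherwise generic within the class defined by \eqref{eq-Weyl_CY_GS}, the map $\mcK(C)$ is injective on the irreducible $\prb$-modules $\mfW_{-\frac{3}{2}}^0$ and $\mfW_{-\frac{3}{2}}^1$ of the intrinsic torsion. This is a pointwise $\g_0$-equivariant statement: the domain and codomain decompose into isotypic $\gl(m,\C)$-components, and by Schur's lemma each component of $\mcK(C)$ acts as a scalar multiple of a fixed equivariant map, the scalar being a linear combination of the irreducible pieces $\mfC_0^j$ of $C$. Genericity means that these scalars do not all vanish, so $\mcK(C)$ is injective. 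Granting this, the identity of Step 3 forces $\Gamma^{(ABC)} = \Gamma^{[ABC]} = 0$, hence ${}^\mfW _\xi \Pi_{-\frac{3}{2}}^0(\Gamma) = {}^\mfW _\xi \Pi_{-\frac{3}{2}}^1(\Gamma) = 0$, which by Proposition \ref{prop-intrinsic_torsion_connection} is exactly the foliating condition \eqref{eq-foliating_spinor}. This completes the proof modulo the representation-theoretic injectivity computation, which is the technical heart of the argument and is where the precise meaning of ``otherwise generic'' is fixed.
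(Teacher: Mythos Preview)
The paper does not contain a proof of this theorem: it is quoted verbatim from \cite{Taghavi-Chabert2012} and stated here only to motivate Conjecture~\ref{conjec-GS} and the discussion around conformal Killing spinors. There is therefore no ``paper's own proof'' to compare against.

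That said, your outline is the right shape for a Goldberg--Sachs argument: differentiate the algebraic Weyl condition, use the Bianchi identity to trade $\nabla C$ for the Cotton--York tensor, and then read the remaining equation as an algebraic constraint on the intrinsic torsion whose kernel is controlled by genericity of the Weyl tensor. But as written it is a sketch, not a proof, and the gaps are substantive. First, in Step~2 you apply only $\nabla^d$ (contracting the single free tensorial index); this produces one scalar-valued spinorial equation, and it is not clear this carries enough information to pin down both $\Gamma^{[ABC]}$ and $\Gamma^{(AB)C}$. The actual argument in \cite{Taghavi-Chabert2012} works with the full second Bianchi identity $\nabla_{[e}R_{ab]cd}=0$ contracted against several copies of $\xi$, which yields a richer system. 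Second, your Step~3 claim that the residual term $\Lambda(C,\Gamma_{\mathrm{rest}})$ involves only the $\mfW^{-\frac{1}{2}}$ part of the intrinsic torsion is asserted but not shown; when $C\in\mfC^0$ is paired with $\nabla\xi$, the filtration bookkeeping needs to be done explicitly, and cross-terms between $\mfC_0^j$ and $\mfW_{-\frac{1}{2}}^k$ do appear. Third, you yourself flag Step~4 as ``the technical heart of the argument'' and defer it entirely; the injectivity of $\mcK(C)$ on $\mfW_{-\frac{3}{2}}^0\oplus\mfW_{-\frac{3}{2}}^1$ is precisely what fixes the meaning of ``otherwise generic'', and without that computation you have not proved the theorem --- you have restated it. Finally, a minor point: in this paper's conventions $A_{abc}=-(n-3)\nabla^d C_{dabc}$, so your normalisation in Step~2 is off by a sign and a reciprocal, though this does not affect the logic.
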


When $m=2$, Theorem \ref{thm-GS} agrees with parts of the generalisation \cites{Kundt1962,Robinson1963,Penrose1986}. If $(\mcM,g)$ is assumed to be Einstein, one can dispense of the genericity assumption and recover the $(\Rightarrow)$ part of \eqref{eq-GS4}.

However, when $m>2$, even if we assume that $(\mcM,g)$ is Einstein, the proof of Theorem \ref{thm-GS} does not account for all the possible degeneracies of $C_{abcd}$ and must depend on the genericity of $C_{abcd}$. In fact, Proposition \ref{prop-foliating_twistor_spinor} invalidates the $(\Rightarrow)$ part of \eqref{eq-GS4}: generically, a pure conformal Killing spinor $\xi^{A'}$ is not geodetic, but satisfies \eqref{eq-sym-foliating}. On the other hand, by Proposition \ref{prop-int_cond_twistor-spinor}, conditions \eqref{eq-Weyl_CY_GS} are satisfied \emph{except} for the genericity assumption, since ${}^\mfC _\xi \Pi_1^0 (C) = 0$. This is independent of whether $(\mcM,g)$ is Einstein or not, and in fact, with reference to Example \ref{exa-3distribution}, Einstein solutions where $\xi^{A'}$ is non-geodetic exist in dimension six \cite{Hammerl2011}. This suggests the following conjecture improving Theorem \eqref{thm-GS}.
\begin{conjec}\label{conjec-GS}
 Suppose that $[ \xi^{A'} ]$ is a projective pure spinor field on a $2m$-dimensional non-conformally flat Einstein spin complex Riemannian manifold such that the Weyl tensor satisfies $C_{abcd} \xi^{aA} \xi^{bB} \xi^{cC} =0$. Then $\xi^{A'}$ satisfies
\begin{align}\tag{\ref{eq-sym-foliating}}
 \left( \xi \ind*{^{a(A}} \nabla_a \xi \ind*{^{bB)}} \right) \xi \ind*{_b^C} & = 0 \, .
\end{align}
\end{conjec}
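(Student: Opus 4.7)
The strategy is to differentiate the algebraic hypothesis and exploit the second Bianchi identity, which in the Einstein setting simplifies considerably: the Cotton-York tensor vanishes, so that $\nabla^a C_{abcd}=0$ and $\nabla_{[e} C_{ab]cd}=0$. First, I would rewrite the hypothesis. The condition $C_{abcd} \xi \ind{^a^A} \xi \ind{^b^B} \xi \ind{^c^C}=0$ means that the tensor $S \ind{_a^{BCD}} := C_{abcd} \xi \ind{^b^B} \xi \ind{^c^C} \xi \ind{^d^D}$ is annihilated by $\xi \ind{^a^A}$ in its vector index, so by Lemma~\ref{lem-vector_decomposition} it factors as $S \ind{_a^{BCD}} = \xi \ind{_a^E} R \ind{_E^{BCD}}$ for some spinorial quantity $R \ind{_E^{BCD}}$ with the appropriate symmetries inherited from the Weyl tensor.

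Next, I would compute $\nabla^a S \ind{_a^{BCD}}$ in two different ways and equate them. Using $\nabla^a C_{abcd}=0$ and Leibniz, the direct calculation yields
\begin{align*}
\nabla^a S \ind{_a^{BCD}} = C_{abcd}\Bigl[(\nabla^a \xi \ind{^b^B}) \xi \ind{^c^C} \xi \ind{^d^D} + \xi \ind{^b^B}(\nabla^a \xi \ind{^c^C}) \xi \ind{^d^D} + \xi \ind{^b^B} \xi \ind{^c^C}(\nabla^a \xi \ind{^d^D})\Bigr] ,
\end{align*}
while the factored form gives $\nabla^a S \ind{_a^{BCD}} = (\nabla^a \xi \ind{_a^E}) R \ind{_E^{BCD}} + \xi \ind{_a^E} \nabla^a R \ind{_E^{BCD}}$. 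Contracting the resulting identity further with additional copies of $\xi$ to kill the unconstrained Weyl components, and symmetrizing over the pair $(BD)$, I expect to isolate the object $(\xi \ind{^a^{(B}} \nabla_a \xi \ind{^b^{D)}}) \xi \ind{_b^C}$. Applying Lemma~\ref{lem-technical} and the purity relation \eqref{eq-purity_cond} to simplify the Weyl contractions should reduce the identity to a schematic form $F(C) \cdot {}^\mfW _\xi \Pi_{-\frac{3}{2}}^1(\Gamma) = 0$, where $F(C)$ is a linear map determined by the components of the Weyl tensor in $\mfC^0$.

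The main obstacle lies in this final algebraic step: one must verify that, after symmetrization, the Bianchi-derived identity couples \emph{only} to the $\mfW_{-\frac{3}{2}}^1$ component of the intrinsic torsion and not to $\mfW_{-\frac{3}{2}}^0$, and that $F(C)$ is non-trivial on generic elements of $\mfC^0$. This hinges on the structure of the Penrose diagrams of Propositions~\ref{prop-main_Weyl} and~\ref{prop-intrinsic_torsion}, and on the vanishing of the Cotton-York tensor in the Einstein case, which eliminates precisely the cross-terms that would otherwise entangle the decomposition. Compared to Theorem~\ref{thm-GS}, the absence of a genericity assumption is reflected in the weaker conclusion: one obtains the symmetric part \eqref{eq-sym-foliating} alone, whereas full foliation \eqref{eq-foliating_spinor} would additionally require \eqref{eq-skew-foliating}, which corresponds to constraining the $\mfC_{-1}^1$ component of the Weyl tensor --- exactly where the genericity hypothesis of \cite{Taghavi-Chabert2012} enters.
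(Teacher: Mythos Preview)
The statement you are attempting to prove is presented in the paper as a \emph{conjecture}, not as a theorem: the paper offers no proof of it whatsoever. So there is no proof in the paper to compare your attempt against. What the paper does is motivate the conjecture by contrasting the situation with Theorem~\ref{thm-GS} and with the behaviour of pure conformal Killing spinors (Proposition~\ref{prop-foliating_twistor_spinor}), and then leaves the statement open.

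Your proposal is therefore not a reproduction of the paper's argument but a sketch of a possible attack on an open problem. As a strategy it is the natural one --- differentiate the algebraic hypothesis, use the contracted and uncontracted Bianchi identities (which simplify in the Einstein case), and try to isolate the $\mfW_{-\frac{3}{2}}^1$ component of the intrinsic torsion. This is exactly the mechanism behind Theorem~\ref{thm-GS} in \cite{Taghavi-Chabert2012}. However, your write-up is a strategy, not a proof: the phrases ``I expect to isolate'' and ``should reduce'' mark precisely the step where the argument is incomplete. The crux --- and presumably the reason the author states this as a conjecture rather than a theorem --- is your final paragraph. You assert that after symmetrisation the Bianchi-derived identity couples only to $\mfW_{-\frac{3}{2}}^1$ and not to $\mfW_{-\frac{3}{2}}^0$, and that the resulting linear map $F(C)$ is nondegenerate without any genericity hypothesis on the Weyl tensor. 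Neither claim is established. In the proof of Theorem~\ref{thm-GS} the genericity assumption is used exactly to guarantee that the analogous linear map is injective; dropping it and hoping that the symmetric projection alone survives requires a detailed module-by-module analysis of how $\g_1 \cdot \mfW_{-\frac{3}{2}}^j$ sits inside the Bianchi constraint, and you have not carried this out. Until that computation is done, what you have is a plausible heuristic for why the conjecture might be true, not a proof.
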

When $m=2$, this conjecture does agree with the $(\Rightarrow)$ part of \eqref{eq-GS4}. Variants involving the Cotton-York tensor and weaker conditions on the Weyl tensor such as $C_{abcd} \xi^{aA} \xi^{bB} \xi^{cdC'} =0$ may also be possible.

\subsection{Application to real pseudo-Riemannian manifolds}\label{sec-real-geom}
Let $(\mcM',g')$ be a spin oriented $2m$-dimensional real pseudo-Riemannian manifold where $g'$ has signature $(p,q)$ with $p+q=2m$ -- we assume that $(\mcM',g')$ is also time-oriented when $p q \neq 0$. We then have a reduction of the structure group of the frame bundle $\mathrm{F} \mcM'$ of $\mcM'$ to $\Spin_0(p,q)$, the two-fold covering of the identity component $\SO_0(p,q)$ of $\SO(p,q)$.

With reference to section \ref{sec-real-spin}, we define an almost null structure on $(\mcM',g')$ to be a totally null complex $m$-plane distribution $\mcN \subset \Tgt^\C \mcM'$. Here $\Tgt^\C_p \mcM' := \C \otimes \Tgt_p \mcM'$, and $\mcN_p$ is totally null with respect to the complexification of $g'_p$ at any point $p$. A pure spinor field $\xi$ up to scale determines an almost null structure $\mcN_\xi$, and any almost null structure arises in this way. The complex conjugation on $\Tgt^\C \mcM'$ that fixes $g'$ sends $\mcN_\xi$ to its complex conjugate $\overline{\mcN}_{\bar{\xi}}$, which we can associate to the conjugate spinor $\bar{\xi}$ of $\xi$. We shall assume that the real index $r:\mcM'\rightarrow \Z_{\geq0}:p \mapsto r_p:=\dim (\mcN_\xi)_p \cap (\overline{\mcN}_{\bar{\xi}})_p$ of $\xi$ is constant on $\mcM'$. The structure group of $\mathrm{F} \mcM'$ is reduced to the stabiliser of \emph{both} $[\xi]$ and $[\bar{\xi}]$ in $\Spin_0(p,q)$ at any point. One can then study the geometric properties of the resulting $\G$-structure on the basis of the algebraic properties of its intrinsinc torsion. A notable example is when $r=0$ and $g$ is positive definite signature, so that $(\mcN_\xi, \overline{\mcN}_{\bar{\xi}})$ define an almost Hermitian structure \cites{Gray1980,Tricerri1981,Falcitelli1994}.

Alternatively, if $(\mcM',g')$ is \emph{real-analytic}, then we can complexify $\mcM'$ to a complex manifold $\mcM$, and extend $g'$ analytically to a holomorphic metric $g$ on $\mcM$. Similar extensions apply to any real-analytic structure on $(\mcM',g')$ such as spin structures and almost null structures \cites{Whitney1959,Woodhouse1977,Eastwood1984}. We then have a complex Riemannian manifold $(\mcM,g)$ just as before except that it is endowed with an additional complex conjugation that fixes the real slice $\mcM'$. In particular,  $\left. \Tgt \mcM \right|_{\mcM'} = \Tgt^\C \mcM'$, and any real-analytic conjugate pair of complex almost null structures $(\mcN_\xi,\overline{\mcN}_{\bar{\xi}})$ extends to two \emph{independent} holomorphic almost null structures $(\mcN_\xi,\widetilde{\mcN}_{\tilde{\xi}})$, say,  on $(\mcM,g)$. In practice, it is enough to consider only one of  these, and apply the machinery of the present paper to it. The real geometry can then be recovered by applying reality conditions on the tensor and spinor fields on restriction to the real slice $\mcM'$.

Finally, if $(\mcM',g')$ is \emph{smooth}, then we cannot in general complexify $(\mcM',g')$ to a complex Riemannian manifold $(\mcM,g)$. This is particularly problematic for almost null structures with $r \neq0,m$, notably in relation with the existence of local `complex' foliations on $(\mcM',g')$, the difficulty being in applying the Frobenius theorem to a \emph{formally} integrable, i.e.\ involutive, smooth \emph{complex} distribution. For instance, Theorem \ref{thm-Robinson} will not work in general -- see \cite{Tafel1985} when $m=2$, $r=1$. Some of these issues are explained in  \cites{Nurowski2002,Trautman2002} for the case $r=1$. The reader should be warned of any pitfalls regarding the use of the results of the present paper in the smooth category.

We can however distinguish the two special cases:
\begin{enumerate}
\item When $r=0$, $g'$ must have signature $(2k,2\ell)$, and $\mcN_\xi$ and $\overline{\mcN}_{\bar{\xi}}$ can be identify with the $\pm\ii$-eigenbundles of an \emph{almost complex structure} $J$ compatible with $g'$, i.e.\ an endomorphism $J$ of $\Tgt \mcM'$ such that $J^2=-\Id$ and $g \circ J = - J \circ g$. The Newlander-Niremberg theorem \cite{Newlander1957} tells us that  even if $(\mcM',g')$ is smooth, the formal integrability of $\mcN_\xi$ (ie the vanishing of the \emph{Nijenhuis tensor} of $J$) is equivalent to its integrability.
\item When $r=m$, $g'$ must have signature $(m,m)$, and there is a \emph{real} span of $\mcN_\xi$ where $\xi$ can be taken to be a real pure spinor. The stabiliser $P'$ of $[\xi]$ in $\Spin_0(m,m)$ is parabolic, and $\mcN_\xi$ thus defines a smooth $P'$-structure on $(\mcM',g')$. The representation theory of $P'$ works in the same way as its complex counterpart. In fact, the spinor representations are the real spans of pure spinors, and all the vector bundles considered are real and smooth.  In this case, we can reformulate all the results of section \ref{sec-geometry} in the smooth real category: $\mcM$ is a spin oriented and time-oriented $2m$-dimensional smooth manifold equipped with a smooth metric $g$ of signature $(m,m)$ with Levi-Civita connection $\nabla$, and we can safely substitute the word `smooth' for `holomorphic'. In particular, the Frobenius theorem applies to prove Theorem \ref{thm-Robinson}.
\end{enumerate}

\paragraph{Acknowlegments}
The author wishes to thank Jan Slov\'ak (Masaryk), Lionel Mason (Oxford), Thomas Leistner (Adelaide) and Dennis The (Australian National University) for their hospitality and helpful discussions, and the organisers of the workshop `The interaction of geometry and representation theory. Exploring new frontiers' at the Erwin Schr\"{o}dinger Institute, Vienna in September 10-14, 2012, where parts of this work were carried out. Finally, he is particularly grateful to Michael Eastwood (Adelaide, ANU) and the anonymous referee for making useful suggestions to improve this article.

This work was funded by a SoMoPro (South Moravian Programme) Fellowship: it has received a financial contribution from the European Union within the Seventh Framework Programme (FP/2007-2013) under Grant Agreement No. 229603, and is also co-financed by the South Moravian Region.

Revision of this paper was carried out while the author was on an Eduard \v{C}ech Institute postdoctoral fellowship GPB201/12/G028, and a GA\v{C}R (Czech Science Foundation) post-doctoral grant GP14-27885P.

\appendix
\section{Spinorial description of curvature tensors}\label{sec-spinor-descript}
Throughout the appendix, $\mfV$ will denote a $2m$-dimensional complex vector space equipped with a non-degenerate symmetric bilinear form $g_{ab}$ and a pure spinor $\xi^{A'}$ as in section \ref{sec-algebra}, to which the reader should refer for the notation.

\subsection{Elements of the $\g_0$-submodules of $\mfF$, $\mfA$ and $\mfC$}\label{sec-g0-bases}
Let us fix a pure spinor $\eta_{A'}$ such that $\xi^{A'}\eta_{A'} = -\frac{1}{2}$. Then we have a splitting \eqref{eq-V-splitting} of $\mfV$ where $\mfV_{\frac{1}{2}} = \ker \xi \ind*{^a^A}$ and $\mfV_{-\frac{1}{2}} = \ker \eta \ind*{^a_A}$ and $\mfS_{\frac{m-2}{4}} = \im \xi \ind*{^a^A}$ and $\mfS_{-\frac{m-2}{4}} = \im \eta \ind*{^a_A}$ are $\g_0$-modules. This splitting induces a splitting of any $\g$-submodule $\mfM$ of $\otimes^k \mfV$ into $\g_0$-submodules. We can then use $\xi \ind*{^a^A}$ and $\eta \ind*{^a_A}$ to project from $\mfM$ to any of its $\g_0$-submodules, and dually, to inject any of its $\g_0$-submodules into $\mfM$. In effect, we convert spinorial quantities to tensorial ones, and vice versa.  In fact, we can think of $\{ \xi^{aA} , \eta^a_A\}$ as a basis for $\mfV_{\frac{1}{2}} \oplus \mfV_{-\frac{1}{2}}$, and these induce bases of $\g_0$-modules. The components of an element of $\mfM$ in this basis can then be interpreted as a spinor. For instance, a spinor
\begin{align*}
 \sigma \ind{_{A \ldots C}^{D \ldots F}}  \in \mfS_{-\frac{m-2}{4}} \otimes \ldots \otimes \mfS_{-\frac{m-2}{4}} \otimes \mfS_{\frac{m-2}{4}} \otimes \ldots \otimes \mfS_{\frac{m-2}{4}} \, ,
\end{align*}
will be sent to the tensor
\begin{align*}
\sigma^{a \ldots c d \ldots f} :=  \xi \ind*{^a^A} \ldots \xi \ind*{^c^C} \, \eta \ind*{^d_D}  \ldots \eta \ind*{^f_F} \, \sigma \ind{_{A \ldots C}^{D \ldots F}}  \in  \mfV \otimes \ldots \otimes \mfV \otimes \mfV \otimes \ldots \otimes \mfV \, .
\end{align*}
If the $\g_0$-module is irreducible, then its elements (ie their indices) will be saturated with symmetries. This clearly applies to $\g$-modules too. In the following any spinor will be referred as \emph{(totally) tracefree} if the contraction of any pair of indices with the identity element $I \ind*{_A^B} := \eta \ind*{^a_A} \xi \ind*{_a^B} $ (see definition \eqref{eq-id_m}) vanishes, eg $\sigma \ind{_{A}^{B}}  I \ind*{_B^A} = 0$. On the other hand, the image of $I \ind*{_A^B}$ in $\wedge^2 \mfV$ will be denoted by the $2$-form $\omega_{ab} := 2 \, \xi \ind*{_{[a}^A} \eta \ind*{_{b]}_A}$.

We apply this procedure to the irreducible $\g_0$-modules $\mfF_i$ $\breve{\mfA}_i^j$, $\breve{\mfC}_i^j$ with reference to Propositions \ref{prop-main_Ricci}, \ref{prop-main_CY} \ref{prop-main_Weyl} of section \ref{sec-class-curvature}.

\paragraph{The tracefree Ricci tensor}
 Let $\Phi_{ab} \in \mfF$. Then
\begin{itemize}
 \item $\Phi_{ab} \in \mfF_0$ if and only if $\Phi_{ab} = 2 \, \xi \ind*{_{\lp{a}} ^A} \eta \ind*{_{\rp{b}} _B} \Phi \ind{_A^B}$
for some tracefree $\Phi \ind{_A^B}$;
\item $\Phi_{ab} \in \mfF_1$ if and only if
$\Phi_{ab} = \xi \ind*{_a ^A} \xi \ind*{_b ^B} \Phi_{AB}$
for some $\Phi_{AB}=\Phi_{(AB)}$, and similarly for $\mfF_{-1} \cong (\mfF_1)^*$ by substituting $\eta_{A'}$ for $\xi^{A'}$, and changing the index structure appropriately.
\end{itemize}

\paragraph{The Cotton-York tensor}
 Let $A_{abc} \in \mfA$. Then
\begin{itemize}
 \item $A_{abc} \in \breve{\mfA}_{\frac{1}{2}}^0$ if and only if $A \ind{_{abc}} = A \ind{_a} \omega \ind{_{bc}} - A \ind{_{\lb{b}}} \omega \ind{_{\rb{c}a}} + \frac{3}{n-1} \, g \ind{_{a \lb{b}}} \omega \ind{_{\rb{c} d}} A \ind{^d}$ for some $A_c = \xi \ind*{_c^C} A_C$;

 \item $A_{abc} \in \breve{\mfA}_{\frac{1}{2}}^1$ if and only if $A_{abc} =  \xi \ind*{_b ^A} \xi \ind*{_c ^B} \eta \ind*{_a _C} A \ind{_{AB}^C}  - \xi \ind*{_a ^A} \xi \ind*{_{\lb{b}} ^B} \eta \ind*{_{\rb{c}} _C} A \ind{_{AB}^C}$ for some tracefree $A \ind{_{AB}^C} = A \ind{_{[AB]}^C}$;

 \item $A_{abc} \in \breve{\mfA}_{\frac{1}{2}}^2$ if and only if $A_{abc} = 2 \, \xi \ind*{_a ^A} \xi \ind*{_{\lb{b}} ^B} \eta \ind*{_{\rb{c}} _C} A \ind{_{AB}^C}$ for some tracefree $A \ind{_{AB}^C} = A \ind{_{(AB)}^C}$;
 
 \item $A_{abc} \in \breve{\mfA}_{\frac{3}{2}}^0$ if and only if $A_{abc} = \xi \ind*{_a ^A} \xi \ind*{_b ^B} \xi \ind*{_c ^C} A \ind{_{ABC}}$ for some $A_{ABC} = A_{A[BC]}$ satisfying $A_{[ABC]} = 0$.
\end{itemize}
Since $(\breve{\mfA}_i^j)^* \cong \breve{\mfA}_{-i}^j$, spinorial formulae for elements of $\breve{\mfA}_{-i}^j$ for $i >0$ can be obtained from those of $\breve{\mfA}_i^j$ by simply interchanging $\xi^{A'}$ and $\eta_{A'}$ and making appropriate changes of index structures.

\paragraph{The Weyl tensor}
Let $C_{abcd} \in \mfC$. Then
\begin{itemize}
 \item $C_{abcd} \in \breve{\mfC}_0^0$ if and only if
$C \ind{_{abcd}} = c \left( 2 \, \omega \ind{_{ab}} \omega \ind{_{cd}} - 2 \, \omega \ind{_{a\lb{c}}} \omega \ind{_{\rb{d}b}} 
+ \frac{6}{n-1} g \ind{_{a \lb{c}}} \, g \ind{_{\rb{d}b}} \right)$
for some complex $c$;
 \item $C_{abcd} \in \breve{\mfC}_0^1$ if and only if
\begin{align*}
 C \ind{_{abcd}} & = \omega \ind{_{ab}} C \ind{_{cd}} + C \ind{_{ab}} \omega \ind{_{cd}} - 2 \, \omega \ind{_{\lb{a}|\lb{c}}} C \ind{_{\rb{d}|\rb{b}}}
- \frac{6}{n-2} \, \left( g \ind{_{\lb{a} | \lb{c}}} \omega \ind{_{\rb{d}}^e} C \ind{_{|\rb{b}e}} + g \ind{_{\lb{c} | \lb{a}}} \omega \ind{_{\rb{b}}^e} C \ind{_{|\rb{d}e}} \right) \, .
\end{align*}
where $C \ind{_{cd}} := 2 \, \xi \ind*{_{\lb{c}} ^C} \eta \ind*{_{\rb{d}} _D} C \ind{_C^D}$ for some tracefree $C \ind{_C^D}$;
\item when $m>3$, $C_{abcd} \in \breve{\mfC}_0^2$ if and only if
\begin{align*}
  C_{abcd} = \xi \ind*{_a ^A} \xi \ind*{_b ^B} \eta \ind*{_c _C} \eta \ind*{_d _D} C \ind{_{A B}^{C D}} + \xi \ind*{_c ^A} \xi \ind*{_d ^B} \eta \ind*{_a _C} \eta \ind*{_b _D} C \ind{_{A B}^{C D}} - 2 \, \xi \ind*{_{\lb{a}|} ^A} \xi \ind*{_{\lb{c}} ^C} \eta \ind*{_{\rb{d} |} _D} \eta \ind*{_{\rb{b}} _B} C \ind{_{A C}^{D B}} \, ,
\end{align*}
for some tracefree $C \ind{_{A C}^{D B}} = C \ind{_{[A C]}^{[D B]}}$;
 \item $C_{abcd} \in \breve{\mfC}_0^3$ if and only if $C_{abcd} = 4 \, \xi \ind*{_{\lb{a}|} ^A} \xi \ind*{_{\lb{c}} ^C} \eta \ind*{_{\rb{d}|} _D} \eta \ind*{_{\rb{b}} _B} C \ind{_{A C}^{D B}}$ for some tracefree $C \ind{_{A C}^{D B}} = C \ind{_{(A C)}^{(D B)}} $;
\item $C_{abcd} \in \breve{\mfC}_1^0$ if and only if
$C \ind{_{abcd}} = \omega \ind{_{ab}} C \ind{_{cd}} + C \ind{_{ab}} \omega \ind{_{cd}} - 2 \, \omega \ind{_{\lb{a}|\lb{c}}} C \ind{_{\rb{d}|\rb{b}}}$ where $C_{ab} := \xi \ind*{_a ^A} \xi \ind*{_b ^B} C_{A B}$ for some $C _{CD} = C _{[CD]}$;
\item $C_{abcd} \in \breve{\mfC}_1^1$ if and only if
$C_{abcd} = 2 \, \xi \ind*{_a ^A} \xi \ind*{_b ^B} \xi \ind*{_{\lb{c}} ^C} \eta \ind*{_{\rb{d}} _D} C \ind{_{A B C}^D} + 2 \, \xi \ind*{_c ^A} \xi \ind*{_d ^B} \xi \ind*{_{\lb{a}} ^C} \eta \ind*{_{\rb{b}} _D} C \ind{_{A B C}^D}$ for some tracefree $C \ind{_{ABC}^D} = C \ind{_{[AB]C}^D}$ satisfying $C \ind{_{[ABC]}^D} = 0$;
\item $C_{abcd} \in \breve{\mfC}_2^0$ if and only if $C_{abcd} = \xi \ind*{_a ^A} \xi \ind*{_b ^B} \xi \ind*{_c ^C} \xi \ind*{_d ^D} C_{A B C D}$ for some $C _{ABCD} = C _{[AB][CD]}$ satisfying $C _{[ABC]D} = 0$.
\end{itemize}
Since $(\breve{\mfC}_i^j)^* \cong \breve{\mfC}_{-i}^j$, spinorial formulae for elements of $\breve{\mfC}_{-i}^j$ for $i >0$ can be obtained from those of $\breve{\mfC}_i^j$ by simply interchanging $\xi^{A'}$ and $\eta_{A'}$ and making appropriate changes of index structures.

\subsection{Maps describing elements of $\prb$-submodules of $\mfF$, $\mfA$ and $\mfC$}\label{sec-projection}
The kernels of the following maps ${}^\mfF _\xi \Pi_i^j$, ${}^\mfA _\xi \Pi_i^j$ and ${}^\mfC _\xi \Pi_i^j$ are $\prb$-submodules of $\mfF$, $\mfA$ and $\mfC$. Their relations to the irreducible $\prb$-modules $\mfF_i^j$, $\mfA_i^j$ and $\mfC_i^j$ as stated in Propositions \ref{prop-main_Ricci}, \ref{prop-main_CY} \ref{prop-main_Weyl} of section \ref{sec-class-curvature} can be verified using arbitrary elements $\breve{\mfF}_i^j$, $\breve{\mfA}_i^j$ and $\breve{\mfC}_i^j$ as given in section \ref{sec-g0-bases}. This can also be seen from the fact they are saturated with symmetries.

\paragraph{The tracefree Ricci tensor}
For $\Phi \ind{_{ab}} \in \mfF$, we define
\begin{align*}
{}^\mfF _\xi \Pi^0_{-1} ( \Phi ) & := \xi \ind*{^a ^A} \xi \ind*{^b ^B} \Phi \ind{_{a b}} \, , &
{}^\mfF _\xi \Pi^0_0 ( \Phi ) & := \xi \ind*{^a ^A} \Phi \ind{_{a b}} \, .
\end{align*}

\paragraph{The Cotton-York tensor}
For $A \ind{_{abc}} \in \mfA$, define
\begin{align*}
{}^\mfA _\xi \Pi^0_{-\frac{3}{2}} ( A ) & := \xi \ind*{^a ^A} \xi \ind*{^b ^B} \xi \ind*{^c ^C} A \ind{_{a b c}} \, , \\
{}^\mfA _\xi \Pi^0_{-\frac{1}{2}} ( A  ) & := \xi \ind*{^{a} ^{A}} A \ind{_{a b c}} \xi \ind*{^{bc} ^{C'}} \, , \\
{}^\mfA _\xi \Pi^1_{-\frac{1}{2}} ( A ) & :=  \xi \ind*{^b ^B} \xi \ind*{^c ^C} A \ind{_{a b c}} + \frac{1}{n-2} \gamma \ind{_a _{D'}^{\lb{B}}} \xi \ind*{^d ^{\rb{C}}} \xi \ind*{^{bc} ^{D'}} A \ind{_{d b c}}   \, , \\
{}^\mfA _\xi \Pi^2_{-\frac{1}{2}} ( A ) & := \xi \ind*{^{a} ^{\lp{A}}} \xi \ind*{^{b} ^{\rp{B}}} A \ind{_{a b c}} + \frac{3}{2(n+2)} \gamma \ind{_c _{D'}^{\lp{A}}} \xi \ind*{^d ^{\rp{B}}} \xi \ind*{^{ba} ^{D'}} A \ind{_{d b a}}  \, , \\
{}^\mfA _\xi \Pi^0_{\frac{1}{2}} ( A )  & := A \ind{_{a b c}} \xi \ind*{^{bc} ^{A'}} \, , \\
{}^\mfA _\xi \Pi^1_{\frac{1}{2}} ( A ) & := \xi \ind*{^{c} ^{A}} A \ind{_{c a b}} - \frac{1}{n-2} \gamma \ind{_{\lb{a}} _{D'} ^A} A \ind{_{\rb{b} c d}} \xi \ind*{^{cd} ^{D'}} \, , \\
{}^\mfA _\xi \Pi^2_{\frac{1}{2}} ( A ) & := A \ind{_{(a b) c}} \xi \ind*{^{c} ^{A}} - \frac{3}{2(n+2)} \gamma \ind{_{\lp{a}} _{D'}^A} A \ind{_{\rp{b} c d}} \xi \ind*{^{cd} ^{D'}} \, .
\end{align*}

\paragraph{The Weyl tensor}
For $C \ind{_{abcd}} \in \mfC$, define
\begin{align*}
{}^\mfC _\xi \Pi_{-2}^0 (C) & := \xi \ind*{^a ^{A}} \xi \ind*{^b ^{B}} \xi \ind*{^c ^{C}} \xi \ind*{^d ^{D}} C \ind{_{a b c d}} \, , \\
{}^\mfC _\xi \Pi_{-1}^0(C) & := \xi \ind*{^a ^{A}} \xi \ind*{^b ^{B}} \xi \ind*{^{c d} ^{C'}} C \ind{_{a b c d}} \, , \\
{}^\mfC _\xi \Pi_{-1}^1(C) & := \xi \ind*{^a ^{A}} \xi \ind*{^b ^{B}} \xi \ind*{^c ^{C}} C \ind{_{a b c e}} + \frac{1}{n+2} \left( \xi \ind*{^a ^{A}} \xi \ind*{^b ^{B}} \xi \ind*{^{c d} ^{D'}} C \ind{_{a b c d}} \gamma \ind{_e _{D'} ^C} - \xi \ind*{^a ^{C}} \xi \ind*{^b ^{\lb{A}}} \xi \ind*{^{c d} ^{D'}} C \ind{_{a b c d}} \gamma \ind{_e _{D'} ^{\rb{B}}} \right) \, , \\
{}^\mfC _\xi \Pi_0^0(C) & := \xi \ind*{^{a b} ^{B'}} \xi \ind*{^{c d} ^{D'}} C \ind{_{a b c d}} \, , \\
{}^\mfC _\xi \Pi_0^1(C) & := \xi \ind*{^{a b} ^{B'}} \xi \ind*{^c ^D} C \ind{_{a b c d}} + \frac{1}{n} \xi \ind*{^{a b} ^{B'}} \xi \ind*{^{c e} ^{D'}} C \ind{_{a b c e}} \gamma \ind{_d _{D'} ^D} \, , \\
{}^\mfC _\xi \Pi_0^2(C) & := 
\xi \ind*{^a ^A} C \ind{_{a[bc]d}} \xi \ind*{^d ^D} + \frac{1}{n-4} \xi \ind*{^{ae} ^{C'}} C \ind{_{aed\lb{b}}} \gamma \ind{_{\rb{c}}_{C'} ^{\lb{A}}} \xi \ind*{^d ^{\rb{D}}} - \frac{1}{2(n-2)(n-4)} \xi \ind*{^{ae} ^{C'}} C \ind{_{aedf}} \xi \ind*{^{df} ^{F'}} \gamma \ind{_{\lb{b}} _{C'} ^{A}} \gamma \ind{_{\rb{c}} _{F'} ^{D}} \, , \\
{}^\mfC _\xi \Pi_0^3(C)  & := 
\xi \ind*{^a ^A} C \ind{_{a(bc)d}} \xi \ind*{^d ^D} - \frac{3}{n+4} \xi \ind*{^{ae} ^{C'}} C \ind{_{aed\lp{b}}} \gamma \ind{_{\rp{c}} _{C'} ^{\lp{A}}} \xi \ind*{^d ^{\rp{D}}} \\
& \qquad \qquad \qquad \qquad \qquad \qquad \qquad - \frac{3}{2(n+2)(n+4)} \xi \ind*{^{ae} ^{C'}} C \ind{_{aedf}} \xi \ind*{^{df} ^{F'}} \gamma \ind{_{\lp{b}} _{C'} ^{A}} \gamma \ind{_{\rp{c}} _{F'} ^{D}} \, , \\
{}^\mfC _\xi \Pi_1^0(C) & : = \xi \ind*{^{a b} ^{B'}} C \ind{_{a b c d}} \, , \\
{}^\mfC _\xi \Pi_1^1(C) & := \xi \ind*{^a ^B} C \ind{_{a b c d}} + \frac{1}{n+2} \left ( \xi \ind*{^{a e} ^{C'}} \gamma \ind{_b _{C'} ^B} C \ind{_{a e c d}} - \xi \ind*{^{a e} ^{C'}} C \ind{_{a e b \lb{c}}} \gamma \ind{_{\rb{d}} _{C'} ^B} \right) \, ,
\end{align*}
with the proviso that ${}^\mfC _\xi \Pi_0^2$ does not occur when $m=3$.

\section{Spinor calculus in four and six dimensions}\label{sec-spin-calculus4-6}
We briefly sketch the spinor calculus in dimensions four and six. Details for the former can be found in \cites{Penrose1984,Penrose1986} and references therein, and for the latter in \cites{Hughston1995,Mason1995}. Our notation will be consistent with the one introduced in section \ref{sec-algebra}. For definiteness, we work over $\C$, but the real case is completely analogous.

\subsection{Four dimensions}\label{sec-spin-calculus4}
Let $(\mcM,g)$ be a four-dimensional complex Riemannian manifold equipped with a holomorphic volume form and a holomorphic spin structure. We first work at a point. The spin group $G:=\Spin(4,\C)$ is isomorphic to ${}^{+} G \times {}^{-} G$, where ${}^{\pm} G:=\SL(2,\C)^{\pm}$ are two distinct copies of $\SL(2,\C)$, acting on the two-dimensional chiral spinor representations $\mfS^\pm$. All spinors in $\mfS^\pm$ are pure. The spaces $\mfS^\pm$ are equipped with volume forms $\varepsilon_{A'B'}$ and $\varepsilon_{AB}$, which will be assumed to satisfy $\varepsilon_{AC} \varepsilon^{BC} = \delta_A^B$. These identify $\mfS^\pm$ with their dual $(\mfS^\pm)^*$. Any irreducible representation of ${}^{\pm} G$ is isomorphic to a $k$-symmetric power $\odot^k \mfS^\pm$ of $\mfS^\pm$ for some $k \geq 0$ -- here $\odot^0 \mfS^\pm \cong \C$. By extension, any irreducible representation of $G$ is isomorphic to  $\left( \odot^k \mfS^+ \right) \otimes \left( \odot^\ell \mfS^- \right)$ for some $k,\ell \geq 0$. In particular, if $\mfV$ is the standard representation of $\SO(4,\C)$, then $\mfV \cong \mfS^+ \otimes \mfS^-$, and we shall convert tensorial indices into spinorial ones by means of the  $\gamma$-matrices $\gamma \ind{^a _{AA'}}$, which satisfy
\begin{align}\label{eq-g-epsilon4}
 g_{ab} \gamma \ind{^a _{AA'}} \gamma \ind{^b _{BB'}} & = 2 \, \varepsilon \ind{_{A'B'}} \varepsilon \ind{_{AB}} \, .
\end{align}
Thus, for any vector $V^a$, we have $V^{AB'} = \frac{1}{\sqrt{2}} \gamma \ind{_a^{AB'}} V^a$.

The Lie algebra $\g$ of $G$ splits into two irreducible parts ${}^{\pm} \g \cong \slie(2,\C)^{\pm}$, the Lie algebras of ${}^{\pm} G$. These are isomorphic to the spaces of self-dual and anti-self-dual $2$-forms $\wedge^2_\pm \mfV$. Correspondingly, a $2$-form $F_{ab}$ splits into self-dual and anti-self-dual parts represented by symmetric spinors $\phi_{A'B'}$ and $\phi_{AB}$ respectively, ie
\begin{align*}
 F_{ab} & = \phi_{A'B'} \varepsilon_{AB} + \phi_{AB} \varepsilon_{A'B'} \quad \in \quad \odot^2 \mfS^+ \oplus \odot^2 \mfS^- \cong \wedge^2_+ \mfV \oplus \wedge^2_- \mfV \cong {}^{+} \g  \oplus {}^{-} \g \, .
\end{align*}
Accordingly, the space $\mfA$ of tensors with Cotton-York symmetries and the space $\mfC$ of tensors with Weyl symmetries split into self-dual and anti-self-dual parts $\mfA^\pm \cong \mfS^\mp \otimes \left( \odot^3 \mfS^\pm \right)$ and $\mfC^\pm \cong \odot^4 \mfS^\pm$ respectively. Further, we have $\mfF \cong \left( \odot^2 \mfS^+ \right) \otimes \left( \odot^2 \mfS^- \right)$. Thus, the tracefree Ricci tensor, the Cotton-York tensor and the Weyl tensor can be expressed as
\begin{align*}
 \Phi_{ab} & =  \Phi \ind{_{A B A'B' }} \, , &
 A_{abc} & = A \ind{_{A A'B'C' }} \varepsilon_{B C} + A \ind{_{A' A B C }} \varepsilon_{B' C'} \, , &
  C_{abcd} & = \Psi_{A'B'C'D'} \varepsilon_{AB} \varepsilon_{CD} + \Psi_{ABCD} \varepsilon_{A'B'} \varepsilon_{C'D'} \, ,
\end{align*}
respectively, where $\Psi_{A'B'C'D'}$ and $\Psi_{ABCD}$ are the self-dual and anti-self-dual parts of $C_{abcd}$ respectively, and $A \ind{_{A A'B'C' }}$ and $A \ind{_{A' A B C }}$ are the self-dual and anti-self-dual parts of $A_{abc}$ respectively.

\subsubsection{Projective spinor fields}
Let $[ \xi^{A'} ]$ be a holomorphic projective spinor field on $(\mcM,g)$, so that the structure group is the frame bundle is reduced to $P$, the stabiliser of $[\xi^{A'}]$ in $G$ at a point, with Lie algebra $\prb$. As in the higher-dimensional case, $P$ is also a parabolic Lie subgroup of $G$, and $\prb$ induces a $|1|$-grading $\g = \g_1 \oplus \g_0 \oplus \g_{-1}$. The only difference now comes from the semi-simplicity of $\g$: if $\mfz_0$ and $\slie_0$ denote the center and simple part of $\g_0$, we have that ${}^{+} \prb := \prb \cap {}^{+} \g  \cong \mfz_0 \oplus \g_1$ and  $\prb \cap {}^{-} \g  = {}^{-} \g \cong \slie_0$. Then ${}^{-} G \cap  P = {}^{-} G$ and ${}^{+} P := {}^{+} G \cap P$ is a parabolic Lie subgroup of ${}^{+} G$. In effect, we can write $P = {}^{+} P \times {}^{-} G$. This means that for any irreducible $G$-module  $\mfM:= \odot^k \mfS^+ \otimes \odot^\ell \mfS^-$ for $k,\ell \geq0$, we have a filtration of $P$-submodules of $\mfM$ induced from a filtration of ${}^{+} P$-submodules of the ${}^{+} G$-module $\odot^k \mfS^+$. Following section \ref{sec-algebra}, set
\begin{align*}
 \mfS^{\frac{1}{2}} & := \langle \xi^{A'} \rangle \, , & \mfS^{-\frac{1}{2}} & := \mfS^+ \, , & \mfS^0 & := \mfS^- \, .
\end{align*}
As a consequence of the two-dimensionality of $\mfS^+$, we can characterise $\mfS^{\frac{1}{2}}$ as
\begin{align}\label{eq-principal_spinor}
\mfS^{\frac{1}{2}} & = \{ \alpha^{A'} \in \mfS^+ : \xi^{A'} \alpha_{A'} = 0 \}  \, .
\end{align}
More generally, any irreducible ${}^{+} G$-module  $\mfS^{-\frac{k}{2}} := \odot^k \mfS^{-\frac{1}{2}}$ admits a filtration
\begin{align}\label{eq-spin-filt-4}
 \mfS^{\frac{k}{2}} \subset \mfS^{\frac{k}{2}-1} \subset \mfS^{\frac{k}{2}-2} \subset \ldots \subset \mfS^{-\frac{k}{2}+1} \subset \mfS^{-\frac{k}{2}} \, .
\end{align}
of ${}^{+} P$-modules
\begin{align*}
 \mfS^{\frac{k-2\ell+2}{2}} := \{ \phi \ind{_{A_1' A_2' \ldots A_k'}} \in \mfS^{-\frac{k}{2}}  :  \phi \ind{_{A_1' A_2' \ldots  A_\ell' A_{\ell+1}' \ldots A_k'}} \xi^{A_1'} \xi^{A_2'} \ldots \xi^{A_\ell'} & = 0  \} \, , & \mbox{for $\ell = 1, \ldots, k$.}
\end{align*}
For any integer $k$, each summand $\mfS^{\frac{k-2\ell+2}{2}} / \mfS^{\frac{k-2\ell+4}{2}}$ in the associated graded module of \eqref{eq-spin-filt-4} is a one-dimensional ${}^+ P$-module isomorphic to a $\C$-module $\mfS_{\frac{k-2\ell+2}{2}}$, on which the grading element $\xi_{(A'} \eta_{B')}$ has eigenvalue $\frac{k-2\ell+2}{2}$ -- here $\xi^{A'} \eta_{A'} =1$.

\paragraph{Intrinsic torsion}
The intrinsic torsion of the $P$-structure can be identified as an element of the $P$-module $\mfW := \mfV \otimes \g/\prb$ at a point. In fact, since ${}^- \g$ acts trivially on $[\xi^{A'}]$, it is enough to consider the $P$-module ${}^+\mfW := \mfV \otimes {}^+ \g/{}^+\prb$. This is in fact consistent with the fact that the spin connection on $\mcS^+$ takes value in ${}^+\g$. We obtain a filtration of $P$-modules ${}^+\mfW^{-\frac{1}{2}} \subset {}^+\mfW^{-\frac{3}{2}} = {}^+\mfW$, where both ${}^+\mfW^{-\frac{1}{2}}$ and ${}^+\mfW^{-\frac{3}{2}} /{}^+\mfW^{-\frac{1}{2}}$ are one-dimensional. Details are left to the reader. The intrinsic torsion generically lies in ${}^+\mfW^{-\frac{3}{2}}$ and whether it degenerates to an element of ${}^+\mfW^{-\frac{1}{2}}$ or vanishes can be expressed by
\begin{align*}
 \xi^{A'} \xi^{B'} \nabla_{AA'} \xi_{B'} & = 0 \, , & \xi^{B'} \nabla_{AA'} \xi_{B'} & = 0 \, ,
\end{align*}
respectively. Here $\nabla_{AB'}$ stands for the Levi-Civita connection $\nabla_a$. These are precisely the geodetic spinor equation \eqref{eq-foliating_spinor} and the recurrent spinor equation \eqref{eq-recurrent_spinor} respectively.

Many of the results of section \ref{sec-geometry} can easily be adapted to the four-dimensional setting. For instance, equation \eqref{prop-conformal-invariance-spinor} can be rewritten as
 \begin{align*}
 \xi^{B'} \nabla_{AA'} \xi_{B'} & = \xi_{A'} \xi^{B'} \nabla_{AB'} f \, .
\end{align*}
We refer to the literature, notably \cites{Penrose1984,Penrose1986} for a detailed study of these spinorial equations and others.

\paragraph{Curvature tensors}
To describe irreducible $P$-submodules of $\left( \odot^k \mfS^+ \right) \otimes \left( \odot^\ell \mfS^- \right)$ for non-negative $k$ and $\ell$, it suffices to tensor the ${}^{+} P$-invariant filtration on $\odot^k \mfS^+$ with $\odot^\ell \mfS^-$. Thus, for the space $\mfF$, we have a filtration $ \{ 0 \} =: \mfF^2 \subset \mfF^1 \subset \mfF^0 \subset \mfF^{-1} := \mfF$ of indecomposable $P$-modules.
For the spaces $\mfA^\pm$, we obtain two distinct filtrations
\begin{align*}
  {}^+\mfA^{\frac{3}{2}} \subset {}^+\mfA^{\frac{1}{2}} \subset {}^+\mfA^{-\frac{1}{2}} \subset {}^+\mfA^{-\frac{3}{2}} = {}^+\mfA \, , &				& {}^-\mfA^{\frac{1}{2}} \subset {}^-\mfA^{-\frac{1}{2}} = {}^-\mfA \, .
\end{align*}
Finally, since $P$ induces no non-trivial filtration on ${}^- \mfC$, we are left with a filtration
\begin{align}\label{eq-filtration-SD-C-4}
 {}^+\mfC^2 \subset {}^+\mfC^1 \subset {}^+\mfC^0 \subset {}^+\mfC^{-1} \subset {}^+\mfC^{-2} = {}^+\mfC \, , 
\end{align}
of ${}^+ P$-submodules of the space ${}^+\mfC$ of self-dual Weyl tensors. Defining 
\begin{align}
\begin{aligned}\label{eq-maps-SD-C-4}
 {}^{{}^+\mfC} _{\; \; \, \xi} \Pi_{-2}^0 ( \Psi' ) & = \xi \ind{^{A'}} \xi \ind{^{B'}} \xi \ind{^{C'}} \xi \ind{^{D'}} \Psi \ind{_{A'B'C'D'}} \, , &
  {}^{{}^+\mfC} _{\; \; \, \xi} \Pi_{-1}^0 ( \Psi' ) & = \xi \ind{^{A'}} \xi \ind{^{B'}} \xi \ind{^{C'}} \Psi \ind{_{A'B'C'D'}} \, , \\
 {}^{{}^+\mfC} _{\; \; \, \xi} \Pi_0^0 ( \Psi' ) & = \xi \ind{^{A'}} \xi \ind{^{B'}} \Psi \ind{_{A'B'C'D'}} \, , &
 {}^{{}^+\mfC} _{\; \; \, \xi} \Pi_1^0 ( \Psi' ) & = \xi \ind{^{A'}} \Psi \ind{_{A'B'C'D'}} \, ,
 \end{aligned}
\end{align}
we see that ${}^+\mfC^i := \ker  {}^{{}^+\mfC} _{\; \; \, \xi} \Pi_{i-1}^0$ for all $i=-1,0,1,2$.

It is instructive to compare these maps with the maps ${}^\mfC _\xi \Pi_i^j$ defined in appendix \ref{sec-projection}, which can also be used in dimension four. It is relatively straightforward to show that, in four dimensions,
\begin{align*}
{}^\mfC _\xi \Pi_{\pm1}^1 (C) = {}^\mfC _\xi \Pi_0^1 (C) & = 0 \, , \\
{}^{\mfC} _\xi \Pi_i^0 ( C ) & = {}^{{}^+\mfC} _{\; \; \, \xi} \Pi_i^0 ( \Psi' ) \, , & \mbox{for $i=-1,0,1,2$,} \\
{}^{\mfC} _\xi \Pi_0^3 ( C ) & = \xi_{A'} \xi_{B'}  \Psi_{ABCD} \, .
\end{align*}
while ${}^\mfC _\xi \Pi_0^2$ is not defined. Since $\xi^{A'}$ is always assumed to be non-zero, one can interpret ${}^{\mfC} _\xi \Pi_0^3$ as the projection from $\mfC$ to ${}^-\mfC$, and expect it to replace the self-duality condition in higher dimensions.

\subsubsection{Principal spinors and the Petrov-Penrose classification}
For comparison, we recall some of the related notions given in \cites{Penrose1984,Penrose1986}. We say that $\xi^{A'}$ is a \emph{$(k-\ell+1)$-fold principal spinor} of an irreducible spinor $\phi \ind{_{A_1' \ldots A_k'}}$ if
\begin{align}\label{eq-principal-spinor}
\phi \ind{_{A_1' A_2' \ldots  A_\ell' A_{\ell+1}' \ldots A_k'}} \xi^{A_1'} \xi^{A_2'} \ldots \xi^{A_\ell'} & = 0 \, .
\end{align}
In the case $k=4$, we have a notion of $(5-\ell)$-fold principal spinor of the self-dual Weyl tensor, which is itself intimately connected to the \emph{Petrov-Penrose classification} of the self-dual Weyl tensor \cites{Petrov2000,Witten1959,Penrose1960}: at a point $p$, $\Psi_{A'B'C'D'}$ defines a homogeneous quartic polynomial $\Psi' (\pi) := \Psi_{A'B'C'D'} \pi^{A'} \pi^{B'} \pi^{C'} \pi^{D'} = 0$, where $[\pi^{A'}]$ are homogeneous coordinates on $\CP^1$, the fiber of the projective spinor bundle over $p$. This polynomial has four roots, and the multiplicities of these roots define the various \emph{Petrov types} $\{ 1 1 1 1 \}$, $\{2 1 1 \}$, $\{ 3 1 \}$, $\{ 2 2 \}$, $\{ 4 \}$ and $\{ - \}$ of $\Psi_{A'B'C'D'}$. In the generic case $\{ 1 1 1 1 \}$, $\Psi' (\pi)$ has four distinct roots, and thus four distinct principal spinors at $p$. Type $\{2 1 1 \}$ consists of a double root and two distinct simple roots, and thus  a $2$-fold principal spinors and two distinct $1$-fold principal spinors, and so on. Type $\{ - \}$ simply means $\Psi_{A'B'C'D'} =0$.

It must be emphasised that \emph{in sharp contract with the main ideas of the present paper}, the Petrov-Penrose classification makes \emph{no assumption} on the existence of a preferred (projective) spinor field on $(\mcM,g)$. In fact, one could single out \emph{any} spinor field $\xi^{A'}$ on $(\mcM,g)$. One would have a filtration \eqref{eq-filtration-SD-C-4} of ${}^{+} P$-modules on ${}^+ \mfC$. Then $\xi^{A'}$ would be a principal spinor for $\Psi_{A'B'C'D'}$ if and only if ${}^{{}^+\mfC} _{\; \; \, \xi} \Pi_{-2}^0 ( \Psi' ) =0$, which is already a non-trivial condition from the viewpoint of the $P$-structure. More generally, comparison of the maps \eqref{eq-maps-SD-C-4} and the definition \eqref{eq-principal-spinor} of principal spinors, one has
\begin{itemize}
\item $\Psi_{A'B'C'D'}$ is of type $\{1 1 1 1\}$ \emph{with} $1$-fold principal spinor $\xi^{A'}$ if and only if ${}^{{}^+\mfC} _{\; \; \, \xi} \Pi_{-2}^0 ( \Psi' ) =0$,
\item $\Psi_{A'B'C'D'}$ is of types $\{2 1 1\}$ or $\{2 2\}$ \emph{with} $2$-fold principal spinor $\xi^{A'}$ if and only if ${}^{{}^+\mfC} _{\; \; \, \xi} \Pi_{-1}^0 ( \Psi' ) =0$,
\item $\Psi_{A'B'C'D'}$ is of type $\{3 1\}$ \emph{with} $3$-fold principal spinor $\xi^{A'}$ if and only if ${}^{{}^+\mfC} _{\; \; \, \xi} \Pi_0^0 ( \Psi' ) =0$,
\item $\Psi_{A'B'C'D'}$ is of type $\{ 4 \}$ \emph{with} $4$-fold principal spinor $\xi^{A'}$ if and only if ${}^{{}^+\mfC} _{\; \; \, \xi} \Pi_1^0 ( \Psi' ) =0$.
\end{itemize}

On the other hand, any principal spinor \emph{field} $\xi^{A'}$ of $\Psi_{A'B'C'D'}$ on $(\mcM,g)$ defines a holomorphic reduction to the structure group $P$, the stabiliser of $[\xi^{A'}]$ at a point in $G$, and one can relate the Petrov types with \eqref{eq-filtration-SD-C-4} as we have just done.

\subsection{Six dimensions}\label{sec-spin-calculus6}
Let $(\mcM,g)$ be a six-dimensional complex Riemannian manifold equipped with a holomorphic volume form and a holomorphic spin structure. We first work at a point. The chiral spinor spaces are dual to each other, i.e.\ $(\mfS^\pm)^* \cong \mfS^\mp$, and can be identified with the four-dimensional standard and dual representations of the spin group $G = \Spin(6,\C) \cong \SL(4,\C)$. All spinors in $\mfS^\pm$ are pure. One can then eliminate the use of primed indices in favour of the unprimed ones, so we shall write $\mfS$ for $\mfS^-$ and $\mfS^*$ for $\mfS^+$. We can also convert tensor indices into a skew-symmetrised pair of indices by means of the skew-symmetric  $\gamma$-matrices $\frac{1}{2} \gamma \ind{^a _{AB}}$ and $\frac{1}{2} \gamma \ind{^a ^{AB}}$, which satisfy the identity
\begin{align}\label{eq-g-epsilon6}
 g_{ab} \gamma \ind{^a _{AB}} \gamma \ind{^b _{CD}} & = 2 \, \varepsilon \ind{_{ABCD}} \, , & g_{ab} \gamma \ind{^a ^{AB}} \gamma \ind{^b ^{CD}} & = 2 \, \varepsilon \ind{^{ABCD}} \, , & g_{ab} \gamma \ind{^a _{AB}} \gamma \ind{^b ^{CD}} & = 4 \, \delta \ind*{_{\lb{A}}^C} \delta \ind*{_{\rb{B}}^D}  \, ,
\end{align}
where $\varepsilon_{ABCD}=\varepsilon_{[ABCD]}$ and $\varepsilon^{ABCD}=\varepsilon^{[ABCD]}$ are volume forms on $\mfS$ and $\mfS^*$ respectively satisfying the normalisation
\begin{align*}
 \varepsilon \ind{_{ABCD}} \varepsilon \ind{^{EFGH}} & = 24 \, \delta \ind*{_{\lb{A}}^E} \delta \ind*{_B^F} \delta \ind*{_C^G} \delta \ind*{_{\rb{D}}^H} \, .
\end{align*}
Skew-symmetrised pairs of spinor indices can be raised and lowered by means of $\frac{1}{2}\varepsilon \ind{_{ABCD}}$ and $\frac{1}{2}\varepsilon \ind{^{ABCD}}$, eg $V_{AB} = \frac{1}{2} \varepsilon \ind{_{ABCD}} V^{CD}$. The isomorphism $\wedge^2 \mfS \cong \wedge^2 \mfS^*$ is the spinorial counterpart of the metric isomorphism between the standard representation $\mfV$ of $\SO(4,\C)$ and its dual $\mfV^*$. More generally, we identify
\begin{align*}
 \mfV & \cong \wedge^2 \mfS \, ,
& \wedge^2 \mfV & \cong \mfS \otimes_\circ \mfS^* \, ,
& \wedge^3_+ \mfV & \cong \odot^2 \mfS^* \, ,
& \wedge^3_- \mfV  & \cong \odot^2 \mfS \, .
\end{align*}
In addition, the tracefree Ricci tensor, the Weyl tensor, and the Cotton-York take the spinorial forms
\begin{align*}
 \Phi \ind{_{ab}} & = \Phi \ind{_{ABCD}} \, , &
 C \ind{_a^b_c^d} & = 8 \, \delta \ind*{_{\lb{A}}^{\lb{C}}} C \ind*{_{\rb{B}\lb{E}}^{\rb{D}\lb{G}}} \delta \ind*{_{\rb{F}}^{\rb{H}}} \, , &
 A \ind{_{ab}^c} & = 4 \, A \ind{_{A B \lb{C}} ^{\lb{E}}} \delta \ind*{_{\rb{D}}^{\rb{F}}} \, ,
\end{align*}
where $\Phi_{ABCD}=\Phi_{[AB][CD]}$ satisfies $\Phi_{[ABC]D} = 0$, $C \ind*{_{AB}^{CD}}= C \ind*{_{(AB)}^{(CD)}}$ is tracefree, and $A \ind{_{A B C} ^D}=A \ind{_{[A B] C} ^D}$ satisfies $A \ind{_{[A B C]} ^D}=0$ and $A \ind{_{A B C} ^A}=0$.

\subsubsection{Projective spinor fields} Let $[ \xi_A ]$ be a holomorphic projective spinor field on $(\mcM,g)$ so that the structure group of the frame bundle is reduced to $P$, the stabiliser of $[\xi_A]$ at a point in $G$. As in section \ref{sec-algebra}, $P$ induces filtrations
\begin{align*}
 \langle \xi_A \rangle  = \mfS^{\frac{3}{4}} & \subset  \mfS^{-\frac{1}{4}} = \mfS^* \, , &
\{ \beta^A \in \mfS : \beta^A \xi_A = 0 \} = \mfS^{\frac{1}{4}} & \subset \mfS^{-\frac{3}{4}} = \mfS \, ,
\end{align*}
of $P$-submodules. We can also re-express $\mfS^{\frac{3}{4}}  = \{ \alpha_A \in \mfS^* : \xi_{\lb{A}} \alpha_{\rb{B}} = 0 \}$. We can extend this argument to spinors of any valence, and play the same game with $\mfV$ and $\g$. In particular, the maps \eqref{eq-map_grading_g} defining the irreducible $\prb$-modules $\g_i^j$ of $\gr(\g)$ can then simply be expressed as
\begin{align*}
 {}^\g _\xi \Pi^0_{-1} (\phi )  & := \xi_{\lb{A}} \phi \ind{_{\rb{B}} ^C} \xi_C \, , &
 {}^\g _\xi \Pi^0_0 (\phi ) & := \phi \ind{_A ^B} \xi_B \, , &
 {}^\g _\xi \Pi^1_0 (\phi ) & := \xi_{\lb{A}} \phi \ind{_{\rb{B}} ^C} - \frac{1}{3} \delta \ind*{_{\lb{A}}^C} \phi \ind{_{\rb{B}} ^D} \xi_D \, .
\end{align*}

\paragraph{Intrinsic torsion}
The intrinsic torsion of the $P$-structure at point viewed as an element of the module $\mfW := \mfV \otimes \g/\prb$ was already described in section \ref{sec-alg_intrinsic_torsion}, and its description in terms of the Levi-Civita connection in section \ref{sec-geom-intrinsic-torsion}. We have already noted the slight differences between six dimensions and higher dimensions. These can be more clearly expressed in the present calculus. Thus, denoting by $\nabla_{AB}$ the Levi-Civita connection, the geodetic spinor equation \eqref{eq-foliating_spinor} and the recurrent spinor equation \eqref{eq-recurrent_spinor} read as
\begin{align}
 \left( \xi_D \nabla \ind{^{DA}} \xi_{\lb{B}} \right) \xi _{\rb{C}} & = 0 \, , \label{eq-foliating_spinor6} \\
 \left( \nabla \ind{^{AB}} \xi_{\lb{C}} \right) \xi _{\rb{D}} & = 0 \, , \label{eq-recurrent_spinor6}
\end{align}
Taking the irreducible parts of these equations yield
\begin{align}
 \xi_A \nabla \ind{^{AB}} \xi_B  & = 0 \, ,  \label{eq-skew-foliating6} \\
  \left( \xi_D \nabla \ind{^{DA}} \xi_{\lb{B}} \right) \xi _{\rb{C}} - \frac{1}{3} \left( \xi_D \nabla \ind{^{DE}} \xi_E  \right) \delta \ind*{^A_{\lb{B}}}\xi _{\rb{C}} & = 0 \, ,  \label{eq-sym-foliating6} \\
 \xi_A \nabla \ind{^{BC}} \xi_C + \xi_C \nabla \ind{^{CB}} \xi_A  & = 0 \label{eq-proj-Dirac6} \, , \\
 \left( \nabla \ind{^{AB}} \xi_{\lb{C}} \right) \xi _{\rb{D}} - \left( \nabla \ind{^{\lb{A}|E}} \xi_E  \right) \delta \ind*{^{|\rb{B}}_{\lb{C}}}\xi \ind{_{\rb{D}}} - \left( \xi_E  \nabla \ind{^{\lb{A}|E}} \xi _{\lb{C}} \right) \delta \ind*{^{|\rb{B}}_{\rb{D}}} - \frac{1}{3} \left(\xi_E \nabla \ind{^{EF}} \xi_F  \right) \delta \ind*{_{\lb{A}}^C} \delta \ind*{_{\rb{B}}^D} & = 0 \, , \label{eq-proj-twistor6} 
\end{align}
which are equivalent to \eqref{eq-skew-foliating}, \eqref{eq-sym-foliating} \eqref{eq-proj-Dirac} and \eqref{eq-proj-twistor6a} respectively.

\paragraph{Proof of Proposition \ref{prop-foliating_twistor_spinor6}}
Consider a conformal Killing spinor $\xi_A$ on $(\mcM,g)$, i.e.\ a solution of
\begin{align*}
 \nabla \ind{^{AB}} \xi \ind{_C} + \frac{2}{3} \delta \ind*{^{[A}_{C}} \nabla \ind{^{B]E}} \xi \ind{_E} & = 0 \, .
\end{align*}
A little algebra yields
\begin{align*}
 \left( \nabla \ind{^{AB}} \xi \ind{_{[C}} \right) \xi \ind*{_{D]}} - \frac{2}{3} \xi \ind{_{[C}} \delta \ind*{_{D]}^{[A}} \nabla \ind{^{B]E}} \xi \ind{_E} & = 0 \, , \\
\xi \ind{_E} \left( \nabla \ind{^{E[A}} \xi \ind{_{[C}} \right) \delta \ind*{_{D]}^{B]}} - \frac{1}{3} \xi \ind{_{[C}} \delta \ind*{_{D]}^{[A}} \left( \nabla \ind{^{B]E}} \xi \ind{_E} \right) - \frac{1}{3} \delta \ind*{_{[C}^A} \delta \ind*{_{D]}^B} \left( \xi \ind{_E} \nabla \ind{^{EF}} \xi \ind{_F} \right) & = 0 \, ,
\end{align*}
from which we deduce that $\xi \ind{_A}$ satisfies equations \eqref{eq-proj-twistor6} and \eqref{eq-sym-foliating6}. This proves Proposition \ref{prop-foliating_twistor_spinor6}.

\paragraph{Curvature tensors}
Finally, we record the maps given in appendix \ref{sec-projection} characterising the $P$-submodules of the spaces of curvature tensors in this spinor calculus:
\begin{itemize}
\item for $\mfF \cong (\wedge^2 \mfS ) \odot (\wedge^2 \mfS )$,
\begin{align*}
{}^\mfF _\xi \Pi^0_{-1} ( \Phi ) & := \xi_{\lb{A}} \Phi  \ind{_{B \rb{C} \lb{D} E}} \xi_{\rb{F}} \, , &
 {}^\mfF _\xi \Pi^0_0 ( \Phi ) & := \xi_{\lb{A}} \Phi  \ind{_{B \rb{C} D E}} \, ,
\end{align*}
\item for $\mfA \cong (\wedge^2 \mfS ) \odot \mfS \otimes_\circ \mfS^*$,
\begin{align*}
 {}^\mfA _\xi \Pi^0_{-\frac{3}{2}} ( A ) & := \xi_{\lb{A}} A \ind{_{B \rb{C} \lb{D}} ^F} \xi_{\rb{E}} \xi_F \\
 {}^\mfA _\xi \Pi^0_{-\frac{1}{2}} ( A ) & := \xi_{\lb{A}} A \ind{_{B \rb{C} D} ^E} \xi_E \, , \\
 {}^\mfA _\xi \Pi^1_{-\frac{1}{2}} (A) & := A \ind{_{AB \lb{C}}^E} \xi \ind{_{\rb{D}}} \xi_E + A \ind{_{CD \lb{A}}^E} \xi \ind{_{\rb{B}}} \xi_E  \, , \\
 {}^\mfA _\xi \Pi^2_{-\frac{1}{2}} ( A ) & := \xi_{\lb{A}} A \ind{_{B \rb{C} \lb{D}} ^F} \xi_{\rb{E}} - \frac{1}{4} \delta \ind*{_{\lb{A}}^F} A \ind{_{B \rb{C} \lb{D}} ^G} \xi_{\rb{E}} \xi_G - \frac{1}{4} \xi \ind{_{\lb{A}}} A \ind{_{B \rb{C} \lb{D}} ^G} \delta \ind*{_{\rb{E}}^F} \xi_G \, , \\
 {}^\mfA _\xi \Pi^0_{\frac{1}{2}} ( A ) & := A \ind{_{A B C} ^D} \xi_D \, , \\
 {}^\mfA _\xi \Pi^1_{\frac{1}{2}} ( A ) & := \xi_{\lb{A}} A \ind{_{B \rb{C} D} ^E} - \frac{1}{2} \delta \ind*{_{\lb{A}}^E} A \ind{_{B \rb{C} D} ^F} \xi_F \, , \\
 {}^\mfA _\xi \Pi^2_{\frac{1}{2}} (A)  & := A \ind{_{AB \lb{C}}^E} \xi \ind{_{\rb{D}}} + A \ind{_{CD \lb{A}}^E} \xi \ind{_{\rb{B}}} - \frac{2}{5} \left( A \ind{_{AB \lb{C}}^F} \delta \ind*{_{\rb{D}}^E} \xi_F + A \ind{_{CD \lb{A}}^F} \delta \ind*{_{\rb{B}}^E} \xi_F \right) \, ,
\end{align*}
\item for $\mfC \cong (\odot^2 \mfS ) \otimes_\circ (\odot^2 \mfS^*)$,
\begin{align*} 
{}^\mfC _\xi \Pi_{-2}^0 ( C ) & := \xi \ind{_{\lb{A}}}C \ind*{_{\rb{B}\lb{C}}^{EF}}  \xi \ind{_{\rb{D}}} \xi \ind{_{E}} \xi \ind{_{F}} \, ,
\\
{}^\mfC _\xi \Pi_{-1}^0 ( C ) & := \xi \ind{_{\lb{A}}} C \ind*{_{\rb{B} C}^{DE}} \xi \ind{_{D}} \xi \ind{_{E}}  \, ,
\\
{}^\mfC _\xi \Pi_{-1}^1 ( C ) & := \xi \ind{_{\lb{A}}}C \ind*{_{\rb{B}\lb{C}}^{EF}}  \xi \ind{_{\rb{D}}} \xi \ind{_{F}} - \frac{1}{4} \delta \ind*{_{\lb{A}} ^E} C \ind*{_{\rb{B}\lb{C}}^{FG}} \xi \ind{_{\rb{D}}} \xi \ind{_{F}} \xi \ind{_{G}} - \frac{1}{4} \delta \ind*{_{\lb{C}} ^E} C \ind*{_{\rb{D}\lb{A}}^{FG}} \xi \ind{_{\rb{B}}} \xi \ind{_{F}} \xi \ind{_{G}}  \, ,
\\
{}^\mfC _\xi \Pi_{0}^0 ( C ) & := C \ind*{_{AB}^{CD}} \xi \ind{_{C}} \xi \ind{_{D}} \, ,
\\
{}^\mfC _\xi \Pi_{0}^1 ( C ) & := \xi \ind{_{\lb{A}}} C \ind*{_{\rb{B}C}^{DE}}  \xi \ind{_{E}} - \frac{1}{3} \delta \ind*{_{\lb{A}}^D} C \ind*{_{\rb{B} C}^{EF}}  \xi \ind{_{E}} \xi \ind{_{F}} \, ,
\\
{}^\mfC _\xi \Pi_{0}^3 ( C ) & := \xi \ind{_{\lb{A}}}C \ind*{_{\rb{B}\lb{C}}^{EF}}  \xi \ind{_{\rb{D}}} - \frac{2}{5} \delta \ind*{_{\lb{A}}^{\lp{E}}} C \ind*{_{\rb{B} \lb{C}}^{\rp{F} G}} \xi \ind{_{\rb{D}}} \xi \ind{_{G}} - \frac{2}{5} \delta \ind*{_{\lb{C}}^{\lp{E}}} C \ind*{_{\rb{D} \lb{A}}^{\rp{F} G}} \xi \ind{_{\rb{B}}} \xi \ind{_{G}} + \frac{1}{10}  \delta \ind*{_{\lb{A}} ^{\lp{E}|}} C \ind*{_{\rb{B}\lb{C}}^{GH}} \delta \ind*{_{\rb{D}}^{|\rp{F}}} \xi \ind{_{G}} \xi \ind{_{H}}  \, ,
\\
{}^\mfC _\xi \Pi_{1}^0 ( C ) & := C \ind*{_{AB}^{CD}} \xi \ind{_{D}} \, ,
\\
{}^\mfC _\xi \Pi_{1}^1 ( C ) & := \xi \ind{_{\lb{A}}} C \ind*{_{\rb{B}C}^{DE}} - \frac{1}{2} \delta \ind*{_{\lb{A}}^{\lp{D}}} C \ind*{_{\rb{B}C}^{\rp{E}F}} \xi \ind{_{F}}  \, .
\end{align*}
\end{itemize}
\begin{rem}
 `Coarser' versions of some of the maps ${}^\mfC _\xi \Pi_i^j$ were already given by Jeffryes \cites{Jeffryes1995,Mason1995} in his investigation of the `principal spinors' of the Weyl tensor in dimension six. The maps ${}^\mfC _\xi \Pi_i^j$ are saturated with symmetries, and are thus more tightly connected to the representation theory of $P$ on $\mfC$.
\end{rem}

\section{Conformal structures}\label{sec-conformal}
We collect a few facts and conventions pertaining to conformal geometry. We roughly follow \cite{Bailey1994}, although our staggering of indices differs from theirs. For specificity, we work in the holomorphic category.

A \emph{holomorphic conformal structure} on a complex manifold $\mcM$ is an equivalence class of holomorphic metrics $[g_{ab}]$ on $\mcM$, whereby two metrics $\hat{g}_{ab}$ and $g_{ab}$ belong to the same class if and only if
\begin{align}\label{eq-conformal_change_metric}
\hat{g} \ind{_{ab}} & = \Omega^2 g \ind{_{ab}}  \, ,
\end{align}
for some non-vanishing holomorphic function $\Omega$ on $\mcM$. The respective Levi-Civita connections $\nabla_a$ and $\hat{\nabla}_a$ of $g_{ab}$ and $\hat{g}_{ab}$ are then related by
\begin{align*}
 \hat{\nabla} \ind{_a} V \ind{^b} & = \nabla \ind{_a} V \ind{^b} + Q \ind{_{ac}^b} V \ind{^c} \, , &
Q \ind{_{abc}} := Q \ind{_{ab}^d} g \ind{_{dc}} = 2 \Upsilon \ind{_{\lp{a}}} g \ind{_{\rp{b}c}} - \Upsilon \ind{_c} g \ind{_{ab}} \, ,
\end{align*}
for any holomorphic vector field $V^a$, where $\Upsilon _a := \Omega^{-1} \nabla _a \Omega$.

\paragraph{Spinor bundles}
We first note that under a rescaling \eqref{eq-conformal_change_metric}, the $\gamma$-matrices can be chosen to transform as
\begin{align*}
 \gamma \ind{_a_A^{B'}} & \mapsto \hat{\gamma} \ind{_a_A^{B'}} = \Omega \gamma \ind{_a_A^{B'}} \, , &
 \gamma \ind{_a_{B'}^A} & \mapsto \hat{\gamma} \ind{_a_{B'}^A} = \Omega \gamma \ind{_a_{B'}^A} \, ,
\end{align*}
where $\hat{\gamma} \ind{_a_A^{B'}}$ and $\hat{\gamma} \ind{_a_{B'}^A}$ denote the $\gamma$-matrices for the metric $\hat{g}_{ab}$. In addition, we can choose the $\Spin(2m,\C)$-invariant bilinear forms on $\mcS$ to rescale with a conformal weight of $1$, and their dual with a conformal weight of $-1$. For instance, $\gamma _{A'B'} \mapsto \hat{\gamma}_{A'B'} = \Omega \gamma_{A'B'}$ when $m$ is even, $\gamma^{A'B} \mapsto \hat{\gamma}^{A'B} = \Omega^{-1} \gamma^{A'B}$ when $m$ is odd, and so on. This means in particular that the quantities $\gamma \ind{_a^{AB'}}$ and $\gamma \ind{^a_{AB'}}$ when $m$ is even, and $\gamma \ind{_a^{A'B'}}$ and $\gamma \ind{^a_{A'B'}}$, and their unprimed counterparts, when $m$ is odd, have conformal weight $0$. Then the spin connection $\hat{\nabla}_a$ is related to $\nabla_a$ by
\begin{align} \label{eq-conformal_spin_connection2}
 \hat{\nabla} \ind{_a} \xi \ind*{^{B'}} & = \nabla \ind{_a} \xi \ind*{^{B'}} - \frac{1}{2} \Upsilon \ind{_b} \gamma \ind{^b_{C'}^D} \gamma \ind{_a_D^{B'}} \xi \ind*{^{C'}} \, ,  
\end{align}
for any holomorphic spinor field $\xi^{A'}$, and similarly for unprimed and dual spinors. This connection can be seen to preserve the hatted $\gamma$-matrices \emph{and} the hatted bilinear forms on $\mcS$. This agrees with the convention of \cite{Penrose1984} but differs from the more standard convention, used in \cite{Lawson1989} for instance.

Now assuming that $\xi^{A'}$ is pure, and setting $\hat{\xi} \ind*{_a^A} := \xi \ind*{^{B'}} \hat{\gamma} \ind{_a _{B'}^A}$, we derive further
\begin{align*}
 \left( \hat{\nabla} \ind{_a} \hat{\xi} \ind*{^b^B} \right) \hat{\xi} \ind*{_b^C} & = \left( \nabla \ind{_a} \xi \ind*{^b^B} \right) \xi \ind*{_b^C} - 2 \Upsilon \ind{_b} \xi \ind*{^b^{\lb{B}}} \xi \ind*{_a^{\rb{C}}} \, , \\
\xi \ind*{^{A'}} \hat{\nabla} \ind{_b} \hat{\xi} \ind*{^b^{B}} - \hat{\xi} \ind*{^b^B} \hat{\nabla} \ind{_b} \xi \ind*{^{A'}} & = \Omega^{-1} \left( \xi \ind*{^{A'}} \nabla \ind{_b} \xi \ind*{^b^{B}} - \xi \ind*{^b^B} \nabla \ind{_b} \xi \ind*{^{A'}} + (m-1) \Upsilon \ind{_a} \xi \ind*{^a^B} \xi \ind*{^{A'}} \right) \, , \\
 \left( \hat{\xi} \ind*{^b^A} \hat{\nabla} \ind{_a} \hat{\xi} \ind*{^b^B} \right) \hat{\xi} \ind*{_b^C} & = \Omega^{-1} \left( \xi \ind*{^a^A} \nabla \ind{_a} \xi \ind*{^b^B} \right) \xi \ind*{_b^C} \, .
\end{align*}
The first two equations can be combined to yield
\begin{align*}
 ( \hat{\nabla} \ind{_a} \hat{\xi} \ind*{^b^B} ) \hat{\xi} \ind*{_b^C} + \frac{2}{m-1} \left( \hat{\xi} \ind*{_a^{\lb{B}}} \hat{\nabla} \ind{_b} \hat{\xi} \ind*{^b^{\rb{C}}} + \hat{\xi} \ind*{^b^{\lb{B}}} \hat{\nabla} \ind{_b} \hat{\xi} \ind*{_a^{\rb{C}}} \right) & = ( \nabla \ind{_a} \xi \ind*{^b^B} ) \xi \ind*{_b^C} + \frac{2}{m-1} \left( \xi \ind*{_a^{\lb{B}}} \nabla \ind{_b} \xi \ind*{^b^{\rb{C}}} + \xi \ind*{^b^{\lb{B}}} \nabla \ind{_b} \xi \ind*{_a^{\rb{C}}} \right) \, .
\end{align*}

\paragraph{Curvature}
In conformal geometry, it is more convenient to use the alternative decomposition to \eqref{eq-Riem_decomposition}
\begin{align} \label{eq-Riem_decomposition-conformal}
 R \ind{_{abcd}} & = C \ind{_{abcd}} - 4 g_{\lb{c}|\lb{a}} \Rho_{\rb{b}|\rb{d}} \, , &
 \Rho_{ab} & := \frac{1}{2-n} \Phi_{ab} - R \frac{1}{2n(n-1)} g_{ab} \, .
\end{align}
where the Weyl tensor $C \ind{_{abc}^d}$ is conformally invariant, and the \emph{Schouten} or \emph{Rho tensor} $\Rho_{ab}$ transforms as
\begin{align}\label{eq-conf-transf-Rho}
 \hat{\Rho}_{ab} & = \Rho_{ab} - \nabla_a \Upsilon_b + \Upsilon_a \Upsilon_b - \frac{1}{2} \Upsilon_c \Upsilon^c g_{ab} \, , &
 \hat{\Rho} & = \Omega^{-2} \left( \Rho - \nabla^c \Upsilon_c - \frac{n-2}{2} \Upsilon^c \Upsilon_c \right) \, ,
\end{align}
where $\Rho := \Rho \ind{_a^a}$. Finally, the Cotton-York tensor $A_{abc} := 2 \nabla \ind{_{\lb{b}}} \Rho \ind{_{\rb{c}a}} = -(n-3) \nabla^d C_{dabc}$, where the expression on the RHS follows from the contracted Bianchi identity, transforms as $\hat{A}_{abc} = A \ind{_{abc}} - \Upsilon^d C_{dabc}$.

\bibliography{biblio}

\end{document}